\newtheorem{thm}{Theorem}[section]
\newtheorem{lemma}[thm]{Lemma}
\newtheorem{proposition}[thm]{Proposition}
\newtheorem{corollary}[thm]{Corollary}
\theoremstyle{definition}
\newtheorem{remark}[thm]{Remark}
\newtheorem{defn}[thm]{Definition}
\newtheorem{example}[thm]{Example}
\def\ie{\emph{i.e.\,\,}}
\def\eg{\emph{e.g.\,\,}}
\def\leq{\leqslant}
\def\geq{\geqslant}
\def\id{\mathrm{id}}
\def\ra{\rightarrow}
\DeclareMathOperator{\colim}{colim}
\renewcommand{\Im}{{\rm Im}}
\newcommand{\Der}{{ \rm Der}}
\newcommand{\Hom}{{\rm Hom}}
\newcommand{\Bij}{{\rm Bij}}
\newcommand{\dgmod}{dg\text{-}\mathrm{mod}}
\newcommand{\mcE}{\mathcal{E}}
\newcommand{\mcF}{\mathcal{F}}
\newcommand{\mcK}{\mathcal{K}}
\newcommand{\mcO}{\mathcal{O}}
\newcommand{\mcP}{\mathcal{P}}
\newcommand{\mcQ}{\mathcal{Q}}
\newcommand{\mcR}{\mathcal{R}}
\newcommand{\mcS}{\mathcal{S}}
\newcommand{\As}{{\mathcal A}s}
\newcommand{\Com}{{\mathcal C}om}
\newcommand{\sh}{{ \rm sh}}
\newcommand{\const}{\mathrm{const}}
\newcommand{\Res}{\mathrm{res}}
\newcommand{\ev}{\mathrm{ev}}
\newcommand{\KO}{{}_{\mcK}\mcO}
\newcommand{\UCModMC}{{}_{U_{\Com}}\mathrm{Mod}(\mathcal{M}_{\Com})}
\newcommand{\UCModME}{{}_{U_{\Com}}\mathrm{Mod}(\mathcal{M}_E)}
\newcommand{\UCModMEn}{{}_{U_{\Com}}\mathrm{Mod}(\mathcal{M}_{E_n})}
\newcommand{\UModP}{{}_U\mathrm{Mod}(\mathcal{M}_{\mcP})}
\newcommand{\VModQ}{{}_V\mathrm{Mod}(\mathcal{M}_{\mcQ})}
\newcommand{\MP}{\mathcal{M}_{\mcP}}
\newcommand{\tree}{ \xymatrix{[r_n] \ar[r]^{f_n} & \cdots\ar[r]^{f_2}&[r_1]}}
\newcommand{\bB}{\bar{B}}
\begin{document}
\title{Iterated bar complexes and $E_n$-homology with coefficients}
\author{Benoit Fresse}
\author{Stephanie Ziegenhagen}
\address{UMR 8524 de l'Universit\'e Lille 1 - Sciences et Technologies - et du CNRS,
Cit\'e Scientifique B\^{a}timent M2,
F-59655 Villeneuve d'Ascq C\'edex, France}
\email{Benoit.Fresse@math.univ-lille1.fr}
\address{Universit\'e Paris 13, Sorbonne Paris Cit\'e, LAGA, CNRS (UMR 7539)\\
 99 Avenue Jean-Baptiste Cl\'ement, 93430 Villetaneuse, France}
\email{ziegenhagen@math.univ-paris13.fr}
\keywords{$E_n$-homology, Iterated bar construction, $E_n$-operad, Module over an operad, Hochschild homology}
\subjclass[2000]{55P48, 57T30, 16E40}

\thanks{The first author is supported in part by grant ANR-11-BS01-002 ``HOGT'' and by the Labex ANR-11-LABX-0007-01 ``CEMPI''. The second author gratefully acknowledges support by the DFG}

\begin{abstract}
The first author proved in a previous paper that the $n$-fold bar construction for commutative algebras can be generalized to $E_n$-algebras, and that one can calculate $E_n$-homology with trivial coefficients via this iterated bar construction. We extend this result to $E_n$-homology and $E_n$-cohomology of a commutative algebra $A$ with coefficients in a symmetric $A$-bimodule.
\end{abstract}

\maketitle

\section*{Introduction}

Boardman-Vogt \cite{BV73} and May \cite{Ma72} showed in the early 70's that $n$-fold loop spaces
are equivalent to algebras over the little $n$-cubes operad (at least, up to group completion issues,
or provided that we restrict ourselves to connected spaces).
In this paper, we deal with $E_n$-operads in chain complexes, which are defined as operads quasi-isomorphic to the chain operad
of little $n$-cubes, for any $1\leq n\leq\infty$.
The category of $E_n$-algebras, algebras over an $E_n$-operad, can be thought of as an algebraic version of the category of $n$-fold loop spaces.
If we choose a $\Sigma_*$-cofibrant $E_n$-operad, let $E_n$, then we can define homological invariants specifically suited to $E_n$-algebras,
namely the $E_n$-homology and $E_n$-cohomology of an $E_n$-algebra $A$ with coefficients in a representation $M$ of $A$.
These invariants are defined via derived Quillen functors.
Note that every strictly commutative algebra $A$ is an $E_n$-algebra for any $1\leq n\leq\infty$, and that every symmetric $A$-bimodule
is a representation of the $E_n$-algebra $A$.
The homology of $E_n$-algebras can also be regarded as a particular case of the factorization homology theory (see~\cite{Fra13}),
and, in the case of commutative algebras,
agrees with Pirashvili's higher Hochschild homology theory (see~\cite{Pir00}).
In the case $n=\infty$, we retrieve the $\Gamma$-homology theory of~\cite{RW02}.

In \cite{Fre11} the first author also proved that the classical iterated bar complex of augmented commutative algebras $B^n(A) = B\circ\dots\circ B(A)$,
such as defined by Eilenberg and MacLane (see~\cite{EM53}), computes the $E_n$-homology of $A$
with coefficients in the commutative unital ground ring $M = k$.
The purpose of this paper is to extend this construction to the non-trivial coefficient case in order to get an explicit chain complex
computing the $E_n$-homology and the $E_n$-cohomology of commutative
algebras with coefficients.
In short, we define a chain complex $(M\otimes B^n(A),\partial_{\theta})$
by adding a twisting differential $\partial_{\theta}: M\otimes B^n(A)\rightarrow M\otimes B^n(A)$
to the tensor product of the Eilenberg-MacLane iterated bar complex $B^n(A)$
with any symmetric $A$-bimodule of coefficients $M$ (see Definition~\ref{defn:twistingdifferential}).  We then prove that the chain complex $B^{[n]}_*(A,M) = (M\otimes \Sigma^{-n}B^n(A),\partial_{\theta})$,
where we  use an $n$-fold desuspension $\Sigma^{-n}$ to mark an extra degree shift,
computes the $E_n$-homology of $A$ with coefficients in $M$ (see Theorem~\ref{mainresult}).
Thus, we get
\begin{equation*}
H_*^{E_n}(A,M) = H_*(M\otimes\Sigma^{-n} B^n(A),\partial_{\theta}).
\end{equation*}
In the cohomology case, our result reads
\begin{equation*}
H^*_{E_n}(A,M) = H_*(\underline{\Hom}_k(\Sigma^{-n}B^n(A),M),\partial_{\theta}),
\end{equation*}
for a twisted cochain complex $B_{[n]}^*(A,M) = (\underline{\Hom}_k(\Sigma^{-n}B^n(A),M),\partial_{\theta})$ which we obtain
by replacing the tensor product of the homological construction $M\otimes B^n(A)$
by a differential graded module of homomorphisms $\underline{\Hom}_k(B^n(A),M)$.
These results hold for all $1\leq n\leq\infty$, including $n=\infty$ for a suitable infinite bar complex $\Sigma^{-\infty}B^{\infty}(A)$
which we define as a colimit of the finitely iterated bar constructions $\Sigma^{-n}B^n(A)$, $n = 1,2,\dots$.

Let us emphasize that we use the plain Eilenberg-MacLane iterated bar complexes in our approach.
The chain complexes which we define in this paper therefore give a small algebraic counterpart of the constructions
of factorization homology theory~\cite{Fra13}, which are based on higher category theory models,
and of the constructions of Pirashvili's higher Hochschild homology theory~\cite{Pir00},
which is defined in terms of functors
on simplicial sets.
But we do not use this analogy further in this paper. To get our result, we mainly rely on the theory of modules over operads~\cite{Fre09} and on the methods of~\cite{Fre11}
which we extend to appropriately address the non-trivial coefficient setting.
The small explicit complexes for calculating $E_n$-homology and -cohomology we exhibit in this article have been employed by the second author in \cite{Z} to show that $E_n$-homology and -cohomology can be interpreted as functor homology.

\subsection*{Plan}
We define the chain complexes $B_*^{[n]}$ and $B^*_{[n]}$ and we state the main theorems
in the first section
of the paper.
We explain the correspondence between these chain complexes
and modules over operads
in the second section,
and we prove that these chain complexes effectively compute the $E_n$-homology and the $E_n$-cohomology of commutative algebras
in the third section of the paper.

\subsection*{Conventions}
We fix a unital commutative ring $k$. We work in the category $\dgmod$ of lower $\mathbb{Z}$-graded differential graded $k$-modules, with the differential lowering the degrees.
For a differential graded module $M$ and $m \in M$ we denote the corresponding element in the suspension $\Sigma M$ by $sm$.
For an associative nonunital differential graded algebra $B$ we denote by $B_+$ the unital algebra obtained by adjoining a unit to $B$.

For $\underline e$ a finite set, we denote by $\vert e \vert$ its arity.
For $s\geq 0$ we set $\underline s = \lbrace 1,\dots,s\rbrace$ and $[s] = \lbrace 0,\dots,s \rbrace$.
By $k<\underline e>$ we denote the free $k$-module with basis $\underline e$.

We denote the symmetric operad encoding associative algebras in differential graded $k$-modules by $\As$
and the symmetric operad encoding commutative algebras in differential graded $k$-modules by $\Com$.
We choose to work with the differential graded version of the Barratt-Eccles operad $E$.
Recall that $E$ is a $\Sigma_*$-cofibrant $E_{\infty}$-operad which admits a filtration
$E_1 \subset E_2 \subset \dots \subset E_n \subset E_{n+1} \subset\dots \subset E$,
such that $E_n$ is a $\Sigma_*$-cofibrant $E_n$-operad, see \cite{Be97}.
We also set $E_{\infty} = E$ for $n=\infty$.
Note that $E$ admits a morphism $E\ra \Com$, hence by restriction of structure we can view every differential graded commutative algebra
as an $E_n$-algebra.
We mostly deal with non-unital algebras in what follows.
We therefore assume $E(0) =0$ when we deal with the Barratt-Eccles operad $E$. We similarly set $\As(0) = 0$
for the associative operad $\As$, and $\Com(0) = 0$
for the commutative operad $\Com$.

For the necessary background on symmetric sequences, operads, algebras over an operad and right modules over an operad we refer the reader to \cite{Fre09}. We denote the plethysm for symmetric sequences by $\circ$.
We will frequently switch between considering $\Sigma_*$-modules (with the symmetric groups acting on the right) and contravariant functors defined on the category $\Bij$ of finite sets and bijections as explained for example in \cite[0.2]{Fre11}.

Let $K\circ\mcP$ be the free right $\mcP$-module over the operad $\mcP$ generated by the $\Sigma_*$-module $K$. Given a morphism $f \colon K \ra R$ of $\Sigma_*$-modules with target a right $\mcP$-module, we denote by $\partial_f\colon K\circ\mcP \ra R$ the induced morphism of right $\mcP$-modules.

\section{The results}

We briefly explain our results in this section.

Since $E_n$ is $\Sigma_*$-cofibrant, the category of $E_n$-algebras forms a semi-model category (see \cite[ch.12]{Fre09}). In particular there is a notion of cofibrant replacement for $E_n$-algebras, and we can define their homology and cohomology as derived functors
\begin{equation*}
H^*_{E_n}(A;M) = H_*(\Der_{E_n}(Q_A,M)) \quad \text{and} \quad H_*^{E_n}(A;M) =H_*( M \otimes_{U_{E_n}(Q_A)} \Omega^1_{E_n}(Q_A))
\end{equation*}
for a cofibrant replacement $Q_A$ of the $E_n$-algebra $A$ and a representation $M$ of $A$.
Here for an operad $\mcP$ and a $\mcP$-algebra $A$, a representation $M$ of $A$ can be thought of as generalizing the notion
of an $A$-module.
By $U_{\mcP}(A)$ we denote the enveloping algebra of $A$. Representations of $A$ correspond to left modules over $U_{\mcP}(A)$.
We denote by $\Der_{\mcP}(A;M)$ the differential graded module of $\mcP$-derivations from $A$ to $M$,
and $\Omega^1_{\mcP}(A)$ is the associated module of K\"ahler differentials, satisfying
\begin{equation*}
\Der_{\mcP}(A,M) \cong \underline{\Hom}_{U_{\mcP}(A)}(\Omega^1_{\mcP}(A),M),
\end{equation*}
where $\underline{\Hom}_{U_{\mcP}(A)}(-,-)$ denotes the hom-object
enriched in differential graded modules
of the category of differential graded left modules over $U_{\mcP}(A)$.

For $\mcP=\Com$ we retrieve the usual notions of symmetric bimodules, derivations and K\"ahler differentials.
The universal enveloping algebra is given by
\begin{equation*}
U_{\Com}(A) = A_+.
\end{equation*}

All of these objects are appropriately natural, in particular they are natural in $\mcP$. Hence any symmetric bimodule over a commutative algebra $A$ is also a representation of $A$ as an $E_n$-algebra. We refer the reader to \cite[ch.4, ch.10, ch.13]{Fre09} for further background on these constructions.

In \cite{Fre11} the first author proves that the iterated bar construction can be extended to $E_n$-algebras, and that up to a suspension this complex calculates the $E_n$-homology of a given $E_n$-algebra with coefficients in $k$. In this article, we prove a corresponding result for $E_n$-homology of commutative algebras with coefficients in a symmetric bimodule.

The bar construction is usually defined for augmented unital associative algebras, but we prefer to deal with nonunital algebras.
To retrieve the usual construction, we simply use that a nonunital algebra $A$
is equivalent to the augmentation ideal of the algebra $A_+$
which we define by adding a unit to $A$.

The bar construction $B(A)=(\overline{T}{}^c(\Sigma\overline{A}),  \partial_s)$ of a differential graded associative algebra $A$
is given by the reduced tensor coalgebra $\overline{T}{}^c(-)$ on the suspension of $A$
together with a twisting differential $\partial_s$ which is determined by the multiplication of $A$.
If $A$ is commutative, then the bar construction $B(A)$
also becomes a commutative differential graded algebra with the shuffle product as product,
and we can iteratively define $B^n(A)$. We review this construction in more detail in the next section.

The iterated bar construction $B^n(A)$ is spanned by planar fully grown $n$-level trees whose leaves are labeled by a tensor in $A$.
For $1 \leq n < \infty$, we formally define a planar fully grown $n$-level tree $t$ (an $n$-level tree for short)
as a sequence of order-preserving surjections
\begin{equation*}
t=\tree.
\end{equation*}
The elements in $[r_i]$ are the vertices at level $i$ of the tree. The elements of $[r_n]$ are the leaves of $t$.
The labeled trees which we consider in the definition of the bar construction are pairs $(t,\pi)$,
where $t$ is an $n$-level tree and $\pi = a_0\otimes\dots\otimes a_{r_n}\in A^{\otimes [r_n]}$
is a tensor whose factors are indexed by the leaves $[r_n]$.
We adopt the notation $t(a_0,\dots,a_{r_n})$ for the element of the $n$-fold bar complex defined by such a pair $(t,\pi)$.

In what follows, we also consider trees equipped with a labeling by a finite set $\underline e$.
In this case, the labeling is a bijection
\begin{equation*}
\sigma: \underline e\xrightarrow{\cong}[r_n].
\end{equation*}

We refer the reader to \cite{Fre11} and \cite{FrApp} for a description of the differential on $B^n(A)$.
Before we define the complex calculating $E_n$-homology we need to introduce a notation concerning trees.

\begin{defn}
Let $t=\tree$ be an $n$-level tree. For $s \in [r_n]$ such that $s$ is not the only element in the corresponding $1$-fiber of $t$ containing $s$ we let $t\setminus s$ be the tree
\begin{equation*}t\setminus s = \xymatrix{[r_n-1] \ar[r]^-{f_n'} & [r_{n-1}] \ar[r]^-{f_{n-1}} & \cdots\ar[r]^-{f_2}  & [r_1]}\end{equation*}
with
\begin{equation*}f_n'(x) = \begin{cases}
f_n(x), & x< s,\\
f_n(x+1), & x\geq s.
\end{cases}\end{equation*}
To a tree $(t,\sigma)$ labeled by a finite set $\underline{e}$, we also associate the tree $(t\setminus s,\sigma'_{\sigma^{-1}(s)})$
equipped with the labeling $\sigma'_{\sigma^{-1}(s)} \colon  \underline e \setminus \lbrace \sigma^{-1}(s) \rbrace \ra [r_n-1]$
such that:
\begin{equation*}\sigma'_{\sigma^{-1}(s)}(e) = \begin{cases}
\sigma(e), & \sigma(e)< s,\\
\sigma(e)+1, & \sigma(e)> s.
\end{cases}
\end{equation*}
\end{defn}

\begin{defn}\label{defn:twistingdifferential}
Let $A$ be a differential graded commutative algebra. We define a twisting differential $\partial_{\theta}$ by setting
\begin{eqnarray*}
\partial_{\theta}(u\otimes t(a_0,\dots,a_{r_n})) & = &  \sum_{\substack{0\leq l \leq r_{n-1},\\ \vert f_n^{-1}(l)\vert >1,\\ x=\min f_n^{-1}(l)}} (-1)^{s_{n,x}-1+\vert a_x \vert( \vert a_0 \vert +\dots+\vert a_{x-1} \vert)}  ua_x  \otimes (t \setminus x)(a_0,\dots,\hat{a_x},\dots,a_{r_n})\\
&+& \sum_{\substack{0 \leq l \leq r_{n-1}\\ \vert f_{n}^{-1}(l)\vert >1,\\ y=\max f_n^{-1}(l)}} (-1)^{s_{n,y}+ \vert a_y \vert(\vert u \vert +\vert a_0 \vert +\dots+\vert a_{y-1} \vert)}  a_y u  \otimes (t \setminus y)(a_0,\dots,\hat{a_y},\dots,a_{r_n})
\end{eqnarray*}
for a tree $t= \tree$ labeled by $a_0,\dots,a_{r_n} \in  A$ and $u\in A_+$. We omit the definition of the signs $s_{n,a}$ at this point, the definition will be given in \ref{defn:theta}.
\end{defn}

For example, the element represented by the decorated $2$-level tree
\begin{equation*}
\setlength{\unitlength}{0.18cm}
\begin{picture}(18, 12)
\put(0.5, 5){$u$}
\put(2.5, 5){$\otimes$}
% Baum unten
\put(8, 5){\line(1, -1){4}}
\put(16, 5){\line(-1, -1){4}}
%Baum linksoben
\put(5, 9){\line(3, -4){3}}
\put(8, 9){\line(0, -1){4}}
\put(11, 9){\line(-3, -4){3}}
\put(4.5, 10){$a_0 $}
\put(7.5, 10){$a_1$}
\put(10.5, 10){$a_2$}
%Baum rechtsoben
\put(14, 9){\line(1, -2){2}}
\put(18, 9){\line(-1, -2){2}}
\put(13.5, 10){$a_3 $}
\put(17.5, 10){$a_4$}
\end{picture}
\end{equation*}
is mapped by  $\partial_{\theta}$ to
\begin{equation*}
\setlength{\unitlength}{0.18cm}
\begin{picture}(80, 12)
\put(0, 5){$-ua_0$}
\put(6, 5){$\otimes$}
% Baum unten
\put(10.5, 5){\line(1, -1){4}}
\put(18.5, 5){\line(-1, -1){4}}
%Baum linksoben
\put(8.5, 9){\line(1, -2){2}}
\put(12.5, 9){\line(-1, -2){2}}
\put(8, 10){$a_1 $}
\put(12, 10){$a_2$}
%Baum rechtsoben
\put(16.5, 9){\line(1, -2){2}}
\put(20.5, 9){\line(-1, -2){2}}
\put(16, 10){$a_3 $}
\put(20, 10){$a_4$}

\put(23.5,5){$-$}

\put(26, 5){$(-1)^{\vert a_3 \vert (\vert a_0\vert + \vert a_1\vert + \vert a_2 \vert)} ua_3$}
\put(49.5, 5){$\otimes$}
% Baum unten
\put(55, 5){\line(1, -1){4}}
\put(63, 5){\line(-1, -1){4}}
%Baum linksoben
\put(52, 9){\line(3, -4){3}}
\put(55, 9){\line(0, -1){4}}
\put(58, 9){\line(-3, -4){3}}
\put(51.5, 10){$a_0 $}
\put(54.5, 10){$a_1$}
\put(57.5, 10){$a_2$}
%Baum rechtsoben
\put(63, 9){\line(0, -1){4}}
\put(62.5, 10){$a_4 $}
\end{picture}
\end{equation*}

\begin{equation*}
\setlength{\unitlength}{0.18cm}
\begin{picture}(80, 12)
\put(-3, 5){$+$}
\put(0, 5){$(-1)^{\vert a_2 \vert (\vert u \vert + \vert a_0 \vert + \vert a_1\vert)}a_2u$}
\put(22.5, 5){$\otimes$}
% Baum unten
\put(27, 5){\line(1, -1){4}}
\put(35, 5){\line(-1, -1){4}}
%Baum linksoben
\put(25, 9){\line(1, -2){2}}
\put(29, 9){\line(-1, -2){2}}
\put(24.5, 10){$a_0 $}
\put(28.5, 10){$a_1$}
%Baum rechtsoben
\put(33, 9){\line(1, -2){2}}
\put(37, 9){\line(-1, -2){2}}
\put(32.5, 10){$a_3 $}
\put(36.5, 10){$a_4$}

\put(38.5,5){$-$}

\put(41.5, 5){$(-1)^{\vert a_4 \vert (\vert u \vert + \vert a_0\vert + \dots+ \vert a_3\vert)} a_4u$}
\put(67, 5){$\otimes$}
% Baum unten
\put(72.5, 5){\line(1, -1){4}}
\put(80.5, 5){\line(-1, -1){4}}
%Baum linksoben
\put(69.5, 9){\line(3, -4){3}}
\put(72.5, 9){\line(0, -1){4}}
\put(75.5, 9){\line(-3, -4){3}}
\put(69, 10){$a_0 $}
\put(72, 10){$a_1$}
\put(75, 10){$a_2$}
%Baum rechtsoben
\put(80.5, 9){\line(0, -1){4}}
\put(80, 10){$a_3 $}
\end{picture}
\end{equation*}

For a symmetric $A$-bimodule $M$, we set:
\begin{align*}
B^{[n]}_*(A,M) & = M\otimes_{A_+}(A_+\otimes \Sigma^{-n}B^n(A),\partial_{\theta})
\intertext{and}
B_{[n]}^*(A,M) & = \underline{\Hom}_{A_+}((A_+\otimes\Sigma^{-n} B^n(A),\partial_{\theta}), M).
\end{align*}
In this construction, we just use that the algebra $U_{\Com}(A) = A_+$ is identified with a symmetric bimodule over itself.

This article is dedicated to proving the following results.

\begin{thm}\label{mainresult}
Let $1 \leq n \leq \infty$. Let $A$ be a commutative differential graded algebra and let $M$ be a symmetric $A$-bimodule in $\dgmod$.
Provided that $A$ is cofibrant in $\dgmod$,
we have the identities
\begin{equation*}
H_*^{E_n}(A; M) = H_*(B^{[n]}_*(A,M))\quad\text{and}
\quad H^*_{E_n}(A; M) = H_*(B_{[n]}^*(A,M)).
\end{equation*}
\end{thm}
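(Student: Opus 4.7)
The plan is to work entirely at the level of right modules over the operads $\Com$ and $E_n$, extending the approach of \cite{Fre11}. Following the correspondence developed in Section~2, I would represent $\Sigma^{-n}B^n(A)$ as $\mathcal{B}\circ_{\Com} A$ for a right $\Com$-module $\mathcal{B}$ built from planar fully grown $n$-level trees, and lift the twisted complex $(A_+\otimes\Sigma^{-n}B^n(A),\partial_{\theta})$ to an object $\mathcal{R}$ that is a right $\Com$-module equipped with a compatible left $U_{\Com}$-module structure, so that $\mathcal{R}\circ_{\Com} A$ reconstructs the given complex together with its $A_+$-action. The first combinatorial task is to check that the tree formulas defining $\partial_{\theta}$ (the $ua_x$ and $a_y u$ terms and the signs $s_{n,x}$) are $\Com$-equivariant and compatible with the left $U_{\Com}$-structure, so that $\mathcal{R}$ is a well-defined $(U_{\Com},\Com)$-bimodule in $\dgmod$.

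\textbf{Core identification.} The technical heart of the proof is to establish, for a cofibrant $E_n$-replacement $Q_A$ of $A$, a quasi-isomorphism of left $A_+$-modules
\[
\mathcal{R}\circ_{\Com} A \;\simeq\; \Omega^1_{E_n}(Q_A),
\]
where $\Omega^1_{E_n}(Q_A)$ is regarded as an $A_+$-module via the natural map $U_{E_n}(Q_A)\to A_+$ induced by $E_n\to\Com$ and by the augmentation $Q_A\to A$. One natural way to proceed is to exhibit a comparison map of right $\Com$-modules with $U_{\Com}$-action between $\mathcal{R}$ and a right-module model of the derived K\"ahler differentials, and then to verify that this map is a quasi-isomorphism by filtering $\mathcal{R}$ by tree complexity (or by tensor degree in the bar construction) and reducing to the trivial-coefficient case proven in \cite{Fre11}. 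Under restriction of scalars along $A_+\to k$, the twisting $\partial_{\theta}$ degenerates and the statement becomes exactly the identification of $\Sigma^{-n}B^n(A)$ with $k\otimes^L_{U_{E_n}(Q_A)}\Omega^1_{E_n}(Q_A)$ of \cite{Fre11}, so the extra left $U_{\Com}$-structure encoded by $\partial_{\theta}$ is what upgrades that trivial-coefficient quasi-isomorphism to the required one over $A_+$.

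\textbf{Deducing the two identities.} Once the core quasi-isomorphism is in place, the homology identity follows by applying $M\otimes_{A_+}(-)$: the underlying graded $A_+$-module of $(A_+\otimes\Sigma^{-n}B^n(A),\partial_{\theta})$ is the free $A_+$-module on $\Sigma^{-n}B^n(A)$, hence cofibrant, so ordinary tensor product with $M$ computes the derived tensor product and yields
\[
H_*(B^{[n]}_*(A,M))\;\cong\;H_*\bigl(M\otimes^L_{U_{E_n}(Q_A)}\Omega^1_{E_n}(Q_A)\bigr)\;=\;H_*^{E_n}(A;M).
\]
The cohomology identity follows dually from $\underline{\Hom}_{A_+}(-,M)$, using the same freeness to identify the ordinary enriched Hom with its derived version and the standard adjunction $\Der_{E_n}(Q_A,M)\cong\underline{\Hom}_{U_{E_n}(Q_A)}(\Omega^1_{E_n}(Q_A),M)$. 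The hypothesis that $A$ is cofibrant in $\dgmod$ is used to guarantee that $A_+$ behaves well under these passages.

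\textbf{Expected main obstacle.} The hard step is the core identification of the second paragraph. One must precisely match the combinatorics of $\partial_{\theta}$ with the operadic derivation structure of $\Omega^1_{E_n}$ on a cofibrant $E_n$-resolution, produce a genuine reduction (not merely a formal analogy) to the trivial-coefficient theorem of \cite{Fre11}, and control the signs $s_{n,x}$ carefully enough that the comparison map is compatible with all the module structures. Once this operadic bridge is constructed, the homological and cohomological statements of Theorem~\ref{mainresult} are essentially formal consequences.
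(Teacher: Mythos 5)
Your overall architecture agrees with the paper's: encode the twisted complex as a left $U_{\Com}$-module in right modules over an operad, compare it with an operadic model of the K\"ahler differentials, prove the comparison is a quasi-isomorphism by a filtration that reduces to the trivial-coefficient theorem of \cite{Fre11}, and then obtain both identities by applying $M\otimes_{A_+}(-)$ and $\underline{\Hom}_{A_+}(-,M)$ to the universal-coefficient case $M=A_+$ (this last reduction is exactly what the paper does, and your freeness/cofibrancy argument for it is correct). However, two steps in your ``core identification'' hide genuine difficulties that the paper spends most of its length resolving.

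First, your object $\mathcal{R}$ is a right $\Com$-module, but $\Omega^1_{E_n}(Q_A)$ is computed from a cofibrant replacement $Q_A=B(E_n,E_n,A)$ in $E_n$-algebras, and its natural right-module model $(U_{\Com}\otimes B(I,E_n,E_n),\partial_{\theta_B})$ lives over $E_n$. To build a comparison map by lifting against a trivial fibration you need \emph{both} objects to be quasi-free (hence cofibrant) in $\UCModMEn$; restricting $\mathcal{R}$ along $E_n\to\Com$ does not give a cofibrant object. The paper therefore has to lift $\partial_{\theta}+U_{\Com}\otimes\partial_{\gamma}$ to a twisting differential $\partial_{\lambda}$ on $U_{\Com}\otimes(\Sigma^{-n}T^n\circ E)$ using the homotopy retraction between $E$ and $\Com$, and then show via the complete-graph ($\mcK$-structure) filtration that $\partial_{\lambda}$ restricts to $E_n\subset E$. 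This lifting-and-restriction step is absent from your plan and is not formal. Second, even after one has a quasi-isomorphism $f$ between the two cofibrant objects of $\UCModMEn$, the conclusion $\mathcal{R}\circ_{\Com}A\simeq\Omega^1_{E_n}(Q_A)$ does not follow by simply applying $-\circ_{E_n}A$: these objects are cofibrant in $\UCModMEn$ but \emph{not} cofibrant as right $E_n$-modules, so \cite[15.1.A]{Fre09} does not apply directly. The paper circumvents this by first applying the left Quillen functor $-\circ_{E_n}\Com$, observing that the resulting objects are quasi-free right $\Com$-modules (because $U_{\Com}$ is a free right $\Com$-module), and only then applying $-\circ_{\Com}A$. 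Without these two ingredients your core identification remains an unproved assertion rather than a consequence of the trivial-coefficient case.
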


We complete the proof of this theorem in Section~\ref{section:ModuleToHomology}.

%%%%%%%%%%%%%
%%%%%%%%%%%%%
%%%%%%%%%%%%%

\section{From iterated bar complexes to modules over operads}

\subsection*{Reminder on the trivial coefficient case}

The (unreduced) bar construction was originally defined by Eilenberg-MacLane for augmented algebras in $\dgmod$ in \cite[II.7]{EM53}. We recall from \cite[1.6]{Fre11} the definition of the nonunital reduced bar construction in the context of right modules over an operad, which we will be working in.

Let $\mcP$ be an operad in differential graded modules and let $R$ be an $\As$-algebra in right $\mcP$-modules.
The bar construction associated to $R$ is a twisted right $\mcP$-module such that $BR = (\overline{T}^c(\Sigma R), \partial_s)$,
where we use the tensor product of right $\mcP$-modules
to form the tensor coalgebra
\begin{equation*}
\overline{T}^c(\Sigma R) = \bigoplus_{l\geq 1} (\Sigma R)^{\otimes l}.
\end{equation*}
For a finite set $\underline e$ the object $(\Sigma R)^{\otimes l}(\underline e)$
is spanned by tensors
$sr_1 \otimes \dots \otimes sr_l \in \Sigma R (\underline{e_1}) \otimes \dots \otimes \Sigma R(\underline{e_l})$
such that $\underline e = \underline{e_1} \sqcup \dots \sqcup \underline{e_l}$.
The twist $\partial_s$ is defined by
\begin{equation*}
\partial_s(sr_1 \otimes\dots\otimes sr_l) = \sum_{i=1}^{l-1} (-1)^{i+\vert r_1 \vert +\dots+ \vert r_i \vert } sr_1 \otimes \dots \otimes s\gamma(\id_{\underline 2};r_i,r_{i+1}) \otimes \dots \otimes sr_l
\end{equation*}
where $\id_{\underline{2}} \in \As(2)$ and $\gamma$ refers to the action of $\As$ on $R$.

If $R$ is commutative, \ie if the action of $\As$ on $R$ factors through $\Com$, then $BR$ is a $\Com$-algebra in right $\mcP$-modules. For $sr_1 \otimes \dots \otimes sr_p \in \Sigma R(\underline{e_1}) \otimes \dots \otimes \Sigma R(\underline{e_p}), sr_{p+1} \otimes \dots \otimes sr_{p+q} \in \Sigma R(\underline{e_{p+1}}) \otimes \dots \otimes \Sigma R(\underline{e_{p+q}})$
the multiplication is given by the shuffle product
\begin{equation*}
\sh( sr_1 \otimes \dots \otimes sr_p , sr_{p+1} \otimes \dots \otimes sr_{p+q})
= \sum_{\sigma \in \sh(p,q)} (-1)^{\epsilon} sr_{\sigma^{-1} (1)} \otimes \dots \otimes sr_{\sigma^{-1}(p+q)},
\end{equation*}
where $\sh(p,q) \subset \Sigma_{p+q}$ denotes the set of shuffles of $\lbrace 1,\dots,p \rbrace$ with $\lbrace p+1,\dots,p+q\rbrace$. The sign $\epsilon$ is the graded signature of the shuffle $\sigma$, \ie it is determined  by picking up a factor $(-1)^{(\vert r_i \vert +1) (\vert r_j \vert +1)}$ whenever $i<j$ and $\sigma (i)> \sigma(j)$.

In particular, if $R$ is commutative we can iterate the construction and define an $n$-fold bar complex $B^n(R)$.
Applying this to $\Com$ itself, regarded as a $\Com$-algebra in right $\Com$-modules, we define the commutative algebra $B^n_{\Com}$ in right $\Com$-modules by
\begin{equation*}
B^n_{\Com} := B^n(\Com).
\end{equation*}
According to \cite[2.7, 2.8]{Fre11} the iterated bar module $B^n_{\Com}$ is a quasifree right $\Com$-module
$B^n_{\Com} = (T^n \circ \Com, \partial_{\gamma})$
with $T^n= (\overline{T}^c \Sigma)^n(I)$ a free $\Sigma_*$-module, which we will discuss in more detail later.
By \cite[2.5]{Fre11} it is possible to lift $\partial_{\gamma}$ to a twisting differential
$\partial_{\epsilon} \colon T^n   \circ E \ra T^n \circ E$
and set $B^n_{E} = (T^n \circ  E, \partial_{\epsilon})$.
For an $E$-algebra $A$ we call
\begin{equation*}
B^n_{E}(A) = B^n_{E} \circ_{E} A
\end{equation*}
the $n$-fold bar complex of $A$.
Using that $E$ is equipped with a cell structure indexed by complete graphs the first author proves in \cite[5.4]{Fre11} that $\partial_{\epsilon}$ restricts to
$\partial_{\epsilon} \colon T^n \circ  E_n \ra T^n  \circ E_n$.
Hence setting $B^n_{E_n} = (T^n \circ E_n, \partial_{\epsilon})$ we can define an $n$-fold bar complex
$B^n_{E_n} (A) = B^n_{E_n} \circ_{E_n} A$
for any $E_n$-algebra $A$.
If $A$ is commutative, \ie if $A$ is an $E$-algebra such that the action of $E$ on $A$ factors as the standard trivial fibration $E \ra \Com$ followed by an action of $\Com$ on $A$, then
$B^n_{E_n}(A) = B^n_E(A) = B^n_{\Com}(A)$
is the usual bar construction.

For any $E$-algebra $A$ in $\dgmod$ or in right modules over an operad there is a suspension morphism
$\sigma \colon \Sigma A \ra B_{E}(A)$,
induced by the inclusion $i \colon \Sigma I \ra T^c \Sigma I$. Hence we can set
\begin{equation*}
\Sigma^{-\infty}B^{\infty}(A) = \colim_n \Sigma^{-n}B^n_E(A).
\end{equation*}
In particular we can define the right $E$-module $\Sigma^{-\infty}B^{\infty}_E$. Since the associated suspension morphism is of the form $\sigma= i\circ E$ this is again a quasifree right $E$-module, generated by the $\Sigma_*$-module
$\Sigma^{-\infty}T^{\infty} = \colim_n \Sigma^{-n}T^n$.
Similar considerations apply to $\Sigma^{-\infty}B^{\infty}_{\Com}$, and we have $\Sigma^{-\infty}B^{\infty}_E(\Com)= \Sigma^{-\infty}B^{\infty}_{\Com}$.

As a quasifree right $E_n$-module in nonnegatively graded chain complexes $B^n_{E_n}$ is a cofibrant right $E_n$-module.
The augmentation of the desuspended $n$-fold bar complex $\epsilon \colon \Sigma^{-n}B^n_{E_n}\ra I$
is defined as the composite
$\Sigma^{-n}B^n_{E_n}   \ra \Sigma^{-n}T^n  I \ra \Sigma^{-n} \Sigma^{n}(I) = I$
with the first map induced by the operad morphism $E_n \ra \Com \ra I$ and the second map an iteration of the projection $\overline{T}^c (\Sigma I )\ra \Sigma I$.
In \cite[8.21, 9.4]{Fre11} the first author shows that $\epsilon$ is a quasiisomorphism, hence $\Sigma^{-n} B^n_{E_n}$ is a cofibrant replacement of $I$. From this one deduces that, for an $E_n$-algebra $A$
which is cofibrant as a differential graded $k$-module, there is an isomorphism
\begin{equation*}
H_*^{E_n}(A;k) \cong H_*(\Sigma^{-n}B^n_{E_n}(A))
\end{equation*}
for $1\leq n \leq \infty$ (see \cite[8.22, 9.5]{Fre11}).

%%%%%%%%%%%%%%%%%
%%%%%%%%%%%%%%%%%

\subsection*{The twisted module modeling iterated bar complexes with coefficients}
\label{sec:theta}

We now define the twist $\partial_{\theta}$ on $M \otimes B^n_{\Com}(A)$ which will incorporate the action of the nonunital commutative algebra $A$ on the symmetric $A$-bimodule $M$. As we will see the general case follows from the case of universal coefficients $M=U_{\Com}(A) = A_+$. We first recall the definition of the universal enveloping algebra associated to $\Com$. Note that this is a special case of the notion of enveloping algebras associated to operads, which we will discuss in Proposition \ref{prop:UnivEnv}.

\begin{defn}
We denote by $U_{\Com}$ the algebra in right $\Com$-modules given  by
$U_{\Com}(i)=\Com(i+1)$, \ie
$U_{\Com}(i) = k$
for all $i\geq 0$ with trivial $\Sigma_i$-action, see \eg \cite[10.2.1]{Fre09}. Equivalently,
\begin{equation*}
U_{\Com}(\underline e) = k
\end{equation*}
for all finite sets $\underline e$. Denote by $\mu_{\underline e}$ the generator of $\Com(\underline e)=k$.  Set $\underline{f}_+= \underline f \sqcup \lbrace + \rbrace$ and
denote the generator $\mu_{\underline{f}_+} \in U_{\Com}(\underline f)=\Com(\underline f \sqcup +)$ by $\mu_{\underline{f}}^U$. Then the multiplication in $U_{\Com}$ is
\begin{equation*}
\mu^U_{\underline e}\cdot  \mu^U_{\underline f} =\mu^U_{\underline e \sqcup \underline f},
\end{equation*}
while the right $\Com$-module structure is given by
\begin{equation*}
\mu^U_{\underline e} \circ_e \mu_{\underline f} = \mu^U_{(\underline e \sqcup \underline f)\setminus \lbrace e \rbrace}
\end{equation*}
for $e \in \underline e$.
\end{defn}

Observe that $A_+ =U_{\Com}\circ_{\Com} A$. Hence
\begin{equation*}
U_{\Com}(A) \otimes B^n(A) = (U_{\Com} \otimes B^n_{\Com}) \circ_{\Com} A
\end{equation*}
and it suffices to define
\begin{equation*}
\partial_{\theta} \colon U_{\Com} \otimes B^n_{\Com}\ra U_{\Com}\otimes B^n_{\Com}.
\end{equation*}

Before giving the definition of this twisting differential, we check that $U_{\Com} \otimes (T^n \circ {\Com})$ is a free $U_{\Com}$-module in right $\Com$-modules, and we examine the generating $\Sigma_*$-module $T^n$ closer.

\begin{defn}
Let $\mcP$ be an operad and $(U,\mu_U, 1_U)$ an associative unital algebra in right $\mcP$-modules with $\mcP$-action $\gamma_U \colon U \circ\mcP \ra U$.
The category $\UModP$ of left $U$-modules in right $\mcP$-modules consists of right $\mcP$-modules $(M,\gamma_M)$ equipped with a left $U$-action $\mu_M\colon U \otimes M \ra M$ which is a morphism of right $\mcP$-modules.
The morphisms of this category are morphisms of right $\mcP$-modules which preserve the $U$-action.
\end{defn}

\begin{proposition}\label{prop:FreeUModP}
Let $M$ be a $\Sigma_*$-module. The free object in $\UModP$ generated by $M$ is $U \otimes (M \circ \mcP)$
with $U$-action given by
\begin{equation*}
\xymatrix{U \otimes U \otimes ( M \circ \mcP) \ar[rr]^-{\mu_U \otimes (M \circ \mcP)} && U \otimes (M \circ \mcP)}
\end{equation*}
and right $\mcP$-module structure defined by
\begin{equation*}
\xymatrix{(U \otimes (M \circ \mcP)) \circ \mcP \ar[r]^-{\cong} & (U\circ \mcP) \otimes (M \circ \mcP \circ \mcP) \ar[rr]^-{\gamma_U \otimes (M \circ \gamma_{\mcP})} && U \otimes (M \circ \mcP)}.
\end{equation*}
\end{proposition}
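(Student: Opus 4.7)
The plan is to realize the free functor $M \mapsto U \otimes (M \circ \mcP)$ as the composite of two standard free functors, thus reducing the proposition to a formal statement about composition of adjunctions. First I would recall the forgetful functor
\begin{equation*}
\UModP \longrightarrow \text{right-}\mcP\text{-Mod} \longrightarrow \Sigma_*\text{-Mod}
\end{equation*}
and exhibit each step as having an explicit left adjoint.

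The second forgetful functor has left adjoint $M \mapsto M \circ \mcP$: this is the free right $\mcP$-module functor, which is already recalled in the excerpt and used to form $B^n_{\Com} = T^n \circ \Com$. For the first functor, I would use the general fact that in any symmetric monoidal category with reflexive coequalizers preserved by $U \otimes -$, the forgetful functor from left $U$-modules to the ambient category has left adjoint $R \mapsto U \otimes R$ with $U$-action induced by $\mu_U$. Applying this inside the symmetric monoidal category of right $\mcP$-modules (with $\otimes$ the tensor product of right $\mcP$-modules, which the excerpt already uses to define $\overline{T}^c(\Sigma R)$), yields a left adjoint sending a right $\mcP$-module $R$ to $U \otimes R$, equipped with left $U$-action $\mu_U \otimes R$ and with the right $\mcP$-module structure coming from that of the tensor product.

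Composing these two adjunctions gives a left adjoint to the total forgetful functor $\UModP \to \Sigma_*\text{-Mod}$ sending $M$ to $U \otimes (M \circ \mcP)$. It remains to check that the structures obtained coincide with those stated in the proposition. The left $U$-action on $U \otimes (M \circ \mcP)$ is multiplication on the first tensor factor, which is precisely $\mu_U \otimes (M \circ \mcP)$. For the right $\mcP$-module structure, the tensor product of two right $\mcP$-modules $X \otimes Y$ carries the $\mcP$-action obtained by splitting an input operation from $\mcP$ among the two factors; concretely for $X = U$ and $Y = M \circ \mcP$ this is given exactly by the diagonal composite in the second displayed diagram, using $\gamma_U$ on $U$ and $M \circ \gamma_{\mcP}$ on $M \circ \mcP$.

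The only point requiring verification is that these two structures are compatible, i.e.\ that $\mu_U \otimes (M \circ \mcP)$ is a morphism of right $\mcP$-modules. This is immediate from the diagonal formula for the $\mcP$-action on a tensor product: acting with an operation from $\mcP$ first and then multiplying on the left gives the same result as multiplying first and then acting, because the $U$-multiplication only touches the first tensor factor while the $\mcP$-action on the first factor is $\gamma_U$, which is associative with $\mu_U$ by assumption that $U$ is an associative algebra in right $\mcP$-modules (so $\mu_U$ is a morphism of right $\mcP$-modules). I do not anticipate any genuine obstacle; the only potential source of friction is checking the compatibility axiom cleanly, which can be done by chasing a single hexagonal diagram or, more abstractly, simply noting that the construction is the standard free $U$-module functor applied to the free right $\mcP$-module $M \circ \mcP$ inside the monoidal category of right $\mcP$-modules.
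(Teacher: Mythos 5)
Your argument is correct and is precisely the justification the paper has in mind: the paper states this proposition without proof, treating it as the standard observation that the free functor $\Sigma_*\text{-Mod}\to\UModP$ is the composite of the free right $\mcP$-module functor $M\mapsto M\circ\mcP$ with the free left $U$-module functor $R\mapsto U\otimes R$ taken inside the symmetric monoidal category of right $\mcP$-modules, and that the resulting structure maps are exactly the two displayed composites (the second being the tensor-product $\mcP$-action via the distribution isomorphism $(X\otimes Y)\circ\mcP\cong(X\circ\mcP)\otimes(Y\circ\mcP)$). The only superfluous point is the hypothesis that $U\otimes-$ preserves reflexive coequalizers: this is not needed for the bare free--forgetful adjunction for modules over a monoid, only the unit and associativity of $U$ are used, so the remark is harmless but unnecessary.
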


In the following, for a map $f \colon M \ra N$ from a $\Sigma_*$-module $M$ to $N \in \UModP$ we will denote by
\begin{equation*}\partial_f \colon U \otimes (M \circ\mcP) \ra N\end{equation*}
the associated morphism in $\UModP$. We will use the same notation for morphisms
defined on free right $\mcP$-modules, which version applies will be clear from the context.

Let $1 \leq n < \infty$. By definition, we have $T^n = (\overline{T}^c \Sigma)^n(I)$ and hence, this object has an expansion
such that
\begin{align*}
T^n(\underline e) & = \bigoplus_{\underline e = \underline{e_1} \sqcup \dots \sqcup \underline{e_l}} \Sigma T^{n-1} ( \underline{e_1}) \otimes \dots \otimes \Sigma T^{n-1}(\underline{e_l})
\intertext{and with}
T^1(\underline e) & = (I^{\otimes s})(\underline e) =  \bigoplus_{\sigma \colon \underline e \ra \underline s } k \cdot \sigma
\end{align*}
for $\vert \underline e \vert =s$.
For $n=1$ it is obvious that $T^n(\underline e)$ has generators corresponding to decorated $1$-level trees $(t = [r_1],\sigma: \underline{e}\ra [r_1])$.
For $n>1$, let $sx_1 \otimes \dots \otimes sx_l \in \Sigma T^{n-1} ( \underline{e_1}) \otimes \dots \otimes \Sigma T^{n-1}(\underline{e_l})$. Then the corresponding $n$-level tree is the $n$-level tree with $l$ vertices in level $1$ such that the $i$th vertex is the root of the $n-1$-level tree defined by $x_i$. Hence elements in $T^n(\underline e)$ correspond to $n$-level trees with leaves decorated by $\underline e$. Consequently, as a $k$-module $T^n\circ {\Com}$ is generated by planar fully grown $n$-level trees in $T^n$ with leaves labeled by elements in $\Com$.

\begin{defn}\label{defn:theta}
The morphism
$\partial_{\theta}\colon U_{\Com} \otimes (T^n \circ {\Com}) \ra U_{\Com} \otimes (T^n \circ {\Com})$
in $\UCModMC$ is induced by a map
\begin{equation*}
\theta \colon T^n \ra U_{\Com} \otimes (T^n \circ {\Com})
\end{equation*}
which we define as follows: for $(t,\sigma) \in T^n(\underline e)$ with $t=\tree$
labeled by $\sigma \colon \underline e \ra [r_n]$
we set
\begin{align*}
\theta(t, \sigma)& = \sum_{\substack{0\leq l \leq r_{n-1},\\ \vert f_n^{-1}(l)\vert >1,\\ x=\min f_n^{-1}(l)}} (-1)^{s_{n,x}-1} \mu_{\lbrace \sigma^{-1}(x)\rbrace}^U \otimes (t \setminus x, \sigma'_{\sigma^{-1}(x)})\\
& + \sum_{\substack{0 \leq l \leq r_{n-1}\\ \vert f_{n}^{-1}(l)\vert >1,\\ y=\max f_n^{-1}(l)}} (-1)^{s_{n,y}}\mu_{\lbrace\sigma^{-1}(y)\rbrace}^U \otimes (t \setminus y, \sigma'_{\sigma^{-1}(y)}).
\end{align*}
The sign $(-1)^{s_{n,i}}$ is determined by counting the edges in the tree $t$ from bottom to top and from left to right. Then $s_{n,i}$ is the number assigned to the edge connected to the $i$th leave. These signs can be thought of as originating from the process of switching the map $M \otimes \Sigma A \ra M$ of degree $-1$ past the suspensions in the bar construction.
\end{defn}

A tedious but straightforward calculation yields that $\theta$ is indeed a twisting cochain:

\begin{lemma}
The map $\theta$ defines a twisting cochain on $U_{\Com} \otimes B^n_{\Com}$, {\ie} we have
$(\partial_{\theta} + U_{\Com} \otimes \partial_{\gamma})^2=0$.
\end{lemma}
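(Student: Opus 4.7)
The plan is to reduce the verification to generators and then carry out a case analysis. Since both $\partial_{\theta}$ and $U_{\Com} \otimes \partial_{\gamma}$ are morphisms in $\UCModMC$, they are determined by their restrictions to the generating $\Sigma_*$-module $T^n$ of the free object $U_{\Com} \otimes (T^n\circ\Com)$. Hence it suffices to evaluate $(\partial_{\theta} + U_{\Com}\otimes\partial_{\gamma})^2$ on an arbitrary element $\mu_\emptyset^U \otimes (t,\sigma)$ with $(t,\sigma)\in T^n$. Expanding the square and using that $(U_{\Com}\otimes\partial_{\gamma})^2 = 0$ (as $B^n_{\Com}$ is already a chain complex), we reduce to the identity
\begin{equation*}
\partial_{\theta}^2 + \partial_{\theta}\circ(U_{\Com}\otimes\partial_{\gamma}) + (U_{\Com}\otimes\partial_{\gamma})\circ\partial_{\theta} = 0.
\end{equation*}

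I would then organize the terms of each composite according to how the two elementary operations affect the tree $t$. Recall that $\partial_{\gamma}$ contracts one edge at level $n$, i.e.\ merges two consecutive leaves lying in a common fiber $f_n^{-1}(l)$, while $\partial_{\theta}$ removes the leftmost or rightmost leaf of such a fiber and transfers its label onto the coefficient in $U_{\Com}$. For pairs of operations acting on two distinct top-level fibers of $t$, the underlying tree modifications commute, the products formed in $U_{\Com}$ coincide (here we use commutativity of $U_{\Com}$ as an algebra in right $\Com$-modules), and the two orderings appear with opposite signs coming from the rule $(-1)^{s_{n,\bullet}}$, so they pair off and cancel. Analogously, pairs of $\partial_\theta$-operations acting on the minimum of one fiber and the maximum of another cancel against their opposite-order counterparts.

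The residual contributions involve two operations within the same fiber $F = f_n^{-1}(l)$, and these are precisely where $\partial_\theta^2$ and the mixed terms interact. Here I would distinguish: first, the two $\partial_\theta$-summands for the same fiber $F$ (one extracting $\min F$, one extracting $\max F$) compose with each other and with further removals from $F \setminus \{\min F\}$ or $F\setminus \{\max F\}$; and second, the merge-then-extract and extract-then-merge combinations at the two ends of $F$. In these degenerate cases the required cancellations use the associativity of the multiplication on $U_{\Com}$ together with the compatibility of $\Com$-multiplication with the left $U_{\Com}$-action, so that, for example, merging the two leftmost leaves $a_{x},a_{x+1}$ of $F$ and then extracting the resulting min from $F$ produces $u(a_x a_{x+1}) = (u a_x) a_{x+1}$, matching the two stages of extracting $\min F$ first and then applying $\partial_\gamma$ to collapse $a_{x+1}$ into what was just pushed onto $u$.

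The main obstacle is the bookkeeping of the signs $s_{n,a}$: both leaf-removal and edge-contraction renumber the leaves of $t$, which shifts subsequent values of $s_{n,\bullet}$, and this must be reconciled with the sign $(-1)^{i+|r_1|+\cdots+|r_i|}$ appearing in $\partial_s$ that underlies $\partial_\gamma$. I would handle this by writing $s_{n,a}$ explicitly as the count of edges weakly to the left of the path from the root to leaf $a$, then verifying locally how each elementary operation alters this count and tracking the Koszul signs that arise when the degree-shifting map $M\otimes\Sigma A \ra M$ is commuted past the suspensions inside the iterated bar construction. Beyond this sign analysis no deeper mechanism intervenes, and the identity reduces to the combinatorial verification alluded to by the word ``tedious.''
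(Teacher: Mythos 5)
The paper offers no argument for this lemma at all---it simply declares the verification ``tedious but straightforward''---so your direct computation is the intended approach, and your reduction is valid: both $\partial_{\theta}$ and $U_{\Com}\otimes\partial_{\gamma}$ are degree $-1$ morphisms in $\UCModMC$, so their squares and anticommutator are determined on the generators $T^n$, and $(U_{\Com}\otimes\partial_{\gamma})^2=0$ because $T^n$ and $\Com$ carry no internal differential. Your identification of the essential cancellation mechanisms is also right: disjointly supported elementary operations anticommute, while the degenerate same-fiber configurations cancel because extracting twice produces $\mu^U_{\lbrace e\rbrace}\cdot\mu^U_{\lbrace e'\rbrace}=\mu^U_{\lbrace e,e'\rbrace}$ whereas merging and then extracting produces $\mu^U_{\lbrace f\rbrace}\circ_f\mu_{\lbrace e,e'\rbrace}=\mu^U_{\lbrace e,e'\rbrace}$, i.e.\ the compatibility of the algebra structure and the right $\Com$-module structure of $U_{\Com}$.

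Two points need repair before this becomes a proof. First, your description of $\partial_{\gamma}$ is incomplete: the twisting differential of $B^n_{\Com}=(T^n\circ\Com,\partial_{\gamma})$ contains not only the innermost bar differential (merging two adjacent leaves of a common fiber $f_n^{-1}(l)$ and producing a $\mu\in\Com(2)$), but also the bar differentials of the $n-1$ outer levels, which merge adjacent vertices at levels $1,\dots,n-1$ and shuffle their subtrees while staying inside $T^n\circ I\subset T^n\circ\Com$. A case analysis organized by ``which top-level fiber the operation acts on'' does not see these terms; their cross terms with $\partial_{\theta}$ must also cancel, which they do because such a shuffle preserves each fiber $f_n^{-1}(l)$ as an ordered set (hence its minimum and maximum), so the operations again have disjoint supports and anticommute---but this case has to be stated and checked. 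Second, a smaller slip: the term $u(a_xa_{x+1})$ coming from merge-then-extract cancels against the double-extraction term $(ua_x)a_{x+1}$ of $\partial_{\theta}^2$, not against an ``extract, then apply $\partial_{\gamma}$'' composite; $\partial_{\gamma}$ acts only on the tree and cannot absorb a leaf into the coefficient $u$. Your displayed identity is the correct one, but the attribution of the cancelling partner is off. Finally, the deferred sign bookkeeping is exactly the substance of the ``tedious'' computation, so as written this is a sound plan rather than a finished verification---though in fairness the paper omits that verification entirely.
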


The suspension morphism $\Sigma B^n_{\Com} \ra B^{n+1}_{\Com}$ commutes with the twists $\partial_{\theta}$ defined on $B^n_{\Com}$ and $B^{n+1}_{\Com}$, hence we can extend the above definition to $n=\infty$.

\begin{proposition}\label{defn:thetaForInfty}
The twist $\partial_{\theta}$ extends to a twisting differential on
$U_{\Com} \otimes \Sigma^{-\infty}B^{\infty}_{\Com}$.
\end{proposition}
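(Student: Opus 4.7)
The plan is to exploit the colimit presentation $\Sigma^{-\infty}B^{\infty}_{\Com} = \colim_n \Sigma^{-n}B^n_{\Com}$ and to verify that the twisting differentials $\partial_{\theta}$ already defined at each finite stage are compatible with the transition maps of the colimit, so that the universal property produces the desired extension. First I would spell out, on generators, how the suspension morphism $\sigma \colon \Sigma B^n_{\Com} \ra B^{n+1}_{\Com}$ acts on trees: since $\sigma$ comes from the length-one inclusion $\Sigma I \ra \overline{T}^c\Sigma I$ applied at the outermost $\overline{T}^c\Sigma$ factor in $T^{n+1} = \overline{T}^c\Sigma(T^n)$, it sends an $n$-level tree to the $(n+1)$-level tree obtained by prepending a new root-level surjection at the bottom, while leaving the old maps $f_n,\dots,f_2$ and the leaf set $[r_n]$ intact. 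After desuspension this gives the degree-preserving connecting map $\Sigma^{-n}B^n_{\Com} \ra \Sigma^{-(n+1)}B^{n+1}_{\Com}$ of the colimit system.

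Next I would verify on generators the commutativity of the square
\begin{equation*}
\xymatrix{ U_{\Com}\otimes \Sigma B^n_{\Com} \ar[r]^-{U_{\Com}\otimes\sigma} \ar[d]_-{U_{\Com}\otimes \Sigma\partial_{\theta}} & U_{\Com}\otimes B^{n+1}_{\Com} \ar[d]^-{U_{\Com}\otimes \partial_{\theta}} \\ U_{\Com}\otimes \Sigma B^n_{\Com} \ar[r]^-{U_{\Com}\otimes\sigma} & U_{\Com}\otimes B^{n+1}_{\Com} }
\end{equation*}
Because $\theta$ is defined using only the top-level map and the labelling of the leaves, and $\sigma$ preserves both of these data (relabelling $f_n$ as $f_{n+1}$), the terms of $\partial_{\theta}$ on $\sigma(t)$ are in canonical bijection with those on $t$. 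The sign $s_{n+1,x}$ on the suspended tree differs from $s_{n,x}$ on the original by a fixed amount, namely the number of edges added at the new root level, which is independent of $x$; this uniform discrepancy must match exactly the Koszul sign produced when a degree $-1$ operator is commuted past the additional outermost suspension introduced by $\sigma$.

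Finally, since tensoring with $U_{\Com}$ and desuspension commute with filtered colimits in $\dgmod$, the compatibility just established yields a unique morphism $\partial_{\theta}$ on $U_{\Com}\otimes\Sigma^{-\infty}B^{\infty}_{\Com}$ restricting to the finite-stage twists. The twisting cochain relation $(\partial_{\theta}+U_{\Com}\otimes\partial_{\gamma})^2=0$ holds at each finite stage by the preceding lemma and so passes to the colimit by naturality. The main technical obstacle is the sign verification in the preceding paragraph: one must check term by term that the uniform shift of the edge-count function caused by prepending a new root level matches precisely the Koszul sign arising from the suspension of a degree $-1$ differential, so that the two vertical arrows of the square really do agree after the horizontal identifications.
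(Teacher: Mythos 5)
Your proposal is correct and follows exactly the route the paper takes: the paper's entire justification is the one-line observation that the suspension morphism $\Sigma B^n_{\Com}\ra B^{n+1}_{\Com}$ commutes with the twists $\partial_{\theta}$, after which the colimit presentation gives the extension. Your write-up merely makes explicit the verification (the tree-level description of $\sigma$ and the uniform shift $s_{n+1,x}=s_{n,x}+1$ matching the Koszul sign of the extra suspension) that the paper leaves to the reader.
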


Let $\eta_{U} \colon k \ra U_{\Com}(0)$ denote the unit map of $U_{\Com}$. We want to lift
$ \partial_{\theta} + U_{\Com}\otimes \partial_{\gamma} = \partial_{\theta + \eta_U \otimes \gamma}$
 to $U_{\Com}\otimes B^n_E$. To achieve this we mimick the construction that is used in \cite[2.4]{Fre11} to lift $\partial_{\gamma}$ to $B^n_E$.
The following proposition extends \cite[2.5]{Fre11}.

\begin{proposition}\label{prop:lifting}
Let $\mcR,\mcS$ be operads equipped with differentials $d_{\mcR}$ and $d_{\mcS}$ and let $U$ be an algebra in right $\mcR$-modules.
Suppose there are maps
\begin{gather*}
\xymatrix{ \ar@(ul,dl)[]_-{\nu} \mcR \ar@<1ex>[r]^-{\psi} & \ar@<1ex>[l]^-{\iota} \mcS }
\intertext{such that $\psi$ is a morphism of operads, $\iota$ is a chain map and}
\psi \iota=\id, \quad  d_{\mcR} \nu - \nu d_{\mcR} = \id - \iota \psi \quad \text{and} \quad \psi \nu =0.
\intertext{Let $K=G \otimes \Sigma_*$ be a free $\Sigma_*$-module and}
\beta \colon G  \ra U \otimes  (K \circ \mcS)
\intertext{a twisting cochain which additionally satisfies}
d_{U\otimes (K\circ\mcS)} \beta + \beta d_G=0 \quad \text{ and } \quad \partial_{\beta } \beta =0
\end{gather*}
with $d_{U \otimes (K\circ\mcS)}$ and $d_G$ denoting the differentials on $U\otimes (K\circ\mcS)$ and $G$.

If $K, U$ and $\mcS$ are nonnegatively graded there exists a twisting cochain $\alpha \colon K \ra U \otimes (K\circ\mcR)$ such that
\begin{equation*}
\xymatrix{ U \otimes (K\circ\mcR) \ar[d]^-{U \otimes (K\circ\psi)} \ar[r]^-{\partial_{\alpha}} &
U \otimes (K\circ\mcR)\ar[d]^-{U \otimes (K\circ\psi)}\\
U \otimes (K\circ\mcS) \ar[r]^-{\partial_{\beta}} & U \otimes (K\circ\mcS). }
\end{equation*}
\end{proposition}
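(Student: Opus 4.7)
The plan is to build $\alpha$ by induction on the homological degree of the generators of $K$, starting from the naive lift $\iota_*\beta$ and correcting the failure of the twisting cochain equation using the contracting homotopy $\nu$. Since $K = G\otimes \Sigma_*$, Proposition~\ref{prop:FreeUModP} tells us that a morphism $\alpha\colon K\ra U\otimes(K\circ\mcR)$ is determined by its restriction to the generating $\Sigma_*$-module $G$, and that $\partial_\alpha$ is the induced morphism in $\UModP$ with $\mcP=\mcR$. Write $\iota_*$, $\psi_*$, $\nu_*$ for the maps on $U\otimes(K\circ-)$ induced by $\iota$, $\psi$, $\nu$ respectively; the hypotheses of the proposition give $\psi_*\iota_* = \id$, $\psi_*\nu_* = 0$, and $d\nu_* - \nu_* d = \id - \iota_*\psi_*$.

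The first approximation is $\alpha_0 = \iota_*\beta$. Because $\psi_*\iota_* = \id$, we automatically have $\psi_*\alpha_0 = \beta$, which is the commutativity required by the target square; any further correction will be of the form $\nu_*(\cdot)$, which is annihilated by $\psi_*$ and therefore preserves this identity. However, $\alpha_0$ is generally not a twisting cochain: the failure
\begin{equation*}
E(x) := (d\alpha_0 + \partial_{\alpha_0}\alpha_0 + \alpha_0 d_G)(x)
\end{equation*}
is nonzero because $\iota$ is only a chain homotopy inverse of $\psi$, not an operad morphism. To correct it, I will proceed inductively on the degree $q$ of generators $x\in G$. Assume $\alpha$ has been defined on $G_{<q}$ so that it satisfies the twisting cochain equation in that range and $\psi_*\alpha = \beta$ there. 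For $x\in G_q$, set
\begin{equation*}
\alpha(x) = \iota_*\beta(x) + \nu_*\bigl(\widetilde{E}(x)\bigr),
\end{equation*}
where $\widetilde{E}(x)$ is the obstruction expression assembled from the already-constructed $\alpha$ on $G_{<q}$ together with $\iota_*\beta(x)$. Applying $d + \partial_\alpha$ to $\nu_*\widetilde{E}(x)$ and using $d\nu_* - \nu_*d = \id - \iota_*\psi_*$ produces a term that cancels $\widetilde{E}(x)$ up to $\iota_*\psi_*\widetilde{E}(x)$; the hypotheses $\partial_\beta\beta=0$ and $d_{U\otimes(K\circ\mcS)}\beta + \beta d_G = 0$ on $\beta$, combined with the inductive compatibility $\psi_*\alpha = \beta$ and the fact that $\psi$ is an operad morphism, force $\psi_*\widetilde{E}(x) = 0$, so the leftover vanishes. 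The nonnegativity of the grading on $K$, $U$ and $\mcS$ ensures that $\widetilde{E}(x)$ is expressed entirely in terms of $\alpha$ on $G_{<q}$; the induction therefore starts at $q=0$ with $\alpha|_{G_0} = \iota_*\beta|_{G_0}$, for which $\widetilde{E}$ is already zero.

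The main technical obstacle I expect is the verification that $\psi_*\widetilde{E}(x) = 0$ at each inductive step. This requires pushing the relation $\psi_*\alpha = \beta$ and the operad-morphism property of $\psi$ through the explicit formula for $\partial_\alpha$ dictated by the left $U$-module in right $\mcR$-module structure of Proposition~\ref{prop:FreeUModP}, which mixes the action of $\mcR$ with the multiplication of $U$. Once this identity is in hand the induction closes, the square in the statement commutes by $\psi_*\nu_* = 0$, and the resulting $\alpha$ is the desired twisting cochain.
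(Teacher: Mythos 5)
Your overall strategy (lift $\beta$ naively via $\iota$, then correct using the homotopy $\nu$, with convergence controlled by the fact that twisting cochains lower the degree in $K$ and $K$ is bounded below) is the right one, and your observations that $\psi_*\iota_*=\id$ gives commutativity of the square for the leading term and that $\psi_*\nu_*=0$ preserves it under corrections both match the paper. But there is a genuine gap in the correction step: a \emph{single} application of the homotopy does not close the twisting cochain equation. If you set $\alpha(x)=\iota_*\beta(x)\pm\nu_*\widetilde{E}(x)$ with $\widetilde{E}(x)$ the obstruction built from $\iota_*\beta(x)$ and the already-constructed lower-degree values, then the full obstruction for the corrected $\alpha$ is
\begin{equation*}
E(x)=\widetilde{E}(x)\pm\bigl(d\nu_*\widetilde{E}(x)+\partial_{\alpha}\nu_*\widetilde{E}(x)\bigr),
\end{equation*}
and the homotopy relation only turns $d\nu_*\widetilde{E}(x)$ into $\mp\widetilde{E}(x)$ \emph{plus} the residual terms $\nu_*d\widetilde{E}(x)-\iota_*\psi_*\widetilde{E}(x)$. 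Even granting $\psi_*\widetilde{E}(x)=0$, you are left with $\nu_*d\widetilde{E}(x)$ and, more seriously, with $\partial_{\alpha}\bigl(\nu_*\widetilde{E}(x)\bigr)$, the quadratic term evaluated on the correction itself. Neither vanishes in general, and since $\alpha$ on lower degrees is already fixed, there is no remaining freedom at $x$ to absorb them: the equation simply fails. The paper resolves this by iterating: $\alpha=\sum_{m\geq 0}\alpha_m$ with $\alpha_0=\tilde\iota\beta$ and $\alpha_m=\sum_{a+b=m-1}\tilde\nu\,\partial_{\alpha_a}\alpha_b$, each new term cancelling the errors created by the previous ones; the sum is locally finite precisely because $\alpha_m$ lowers the degree in $K$ by $m+1$, which is the correct use of the boundedness you invoke. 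Your one-step formula recovers only the summands $\tilde\nu\,\partial_{\alpha_a}\alpha_0$ and omits all $\tilde\nu\,\partial_{\alpha_a}\alpha_b$ with $b\geq 1$.

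A secondary point you pass over too quickly: since $\iota$ and $\nu$ are not operad morphisms, the "induced maps" $\iota_*$ and $\nu_*$ on $U\otimes(K\circ\mcR)$ are not formal functoriality. One must use the $\Sigma_*$-freeness of $K$ to write $K\circ\mcR=\bigoplus_l G(l)\otimes\mcR^{\otimes l}$ and extend by $\iota^{\otimes l}$ and by the tensor-product homotopy $\sum_{i}(-1)^{i-1}(\iota\psi)^{\otimes i-1}\otimes\nu\otimes\id^{\otimes l-i}$; only then do the relations $\psi_*\iota_*=\id$, $\delta(\tilde\nu)=\id-\tilde\iota\psi_*$ and $\psi_*\tilde\nu=0$ transfer. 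This is where the hypothesis $K=G\otimes\Sigma_*$ is actually used, and it should appear explicitly in the proof.
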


\begin{proof}
Extend the homotopy $\nu \colon \mcR \ra \mcR$ to $\hat \nu^{(l)}\colon \mcR^{\otimes l} \ra \mcR^{\otimes l}$ by setting
\begin{equation*}
\hat{\nu}^{(l)} = \sum_{i=1}^l (-1)^{i-1} (\iota \psi)^{\otimes i-1}\otimes \nu \otimes \id_{\mcR}^{\otimes l-i}
\end{equation*}
and extend $\iota$ to $\hat\iota^{(l)} \colon \mcS^{\otimes l}\ra \mcR^{\otimes l}$
by setting
\begin{equation*}
\hat{\iota}^{(l)} = \iota^{\otimes l}.
\end{equation*}
We have $K\circ\mcR = \bigoplus_{i\geq 0} G(i) \otimes \mcR^{\otimes i}$ since $K$ is $\Sigma_*$-free,
and similarly  $K\circ\mcS = \bigoplus_{i\geq 0} G(i) \otimes \mcS^{\otimes i}$.
We can then define
\begin{equation*}
\tilde \nu \colon U \otimes (K\circ\mcR) \ra U \otimes (K\circ\mcR) \quad \text{ and } \quad \tilde \iota \colon U \otimes (K\circ\mcS) \ra U\otimes (K\circ\mcR)
\end{equation*}
as $\tilde \nu =\id_U \otimes \bigoplus_{l \geq 0} G(l) \otimes \hat{\nu}^{(l)}$ and $\tilde \iota = \id_U \otimes \bigoplus_{l \geq 0} G(l) \otimes \hat{\iota}^{(l)}$.
Note that with this definition, we get:
\begin{equation*}
(U \otimes (K\circ \psi)) \tilde \iota = \id , \quad \delta(\tilde \nu) = \id_{U \otimes (K\circ\mcR)} - \tilde\iota (U\otimes (K\circ \psi)) \quad \text{ and } \quad (U \otimes (K\circ \psi)) \tilde \nu=0.
\end{equation*}
We define $\alpha \colon G \ra U \otimes (K\circ\mcR)$ by setting $\alpha_0= \tilde \iota  \beta,$
\begin{equation*}
\alpha_m = \sum_{a+b=m-1}\tilde \nu \partial_{\alpha_a} \alpha_b
\quad\text{ and }\quad
\alpha = \sum_{m\geq 0}\alpha_m.
\end{equation*}

Observe that $\beta\colon K \ra U \otimes (K\circ\mcS)$ lowers the degree in $K$ by at least $1$, hence $\alpha_m$ lowers the degree in $K$ by $m+1$. Since $K$ is bounded below, $\alpha$ is well defined.
Then for $m\geq 1$
\begin{equation*}
(U \otimes (K\circ\psi)) \alpha_m = (U \otimes (K\circ\psi)) \sum_{a+b=m-1}\tilde \nu \partial_{\alpha_a} \alpha_b  = 0,
\end{equation*}
while  $(U \otimes (K\circ\psi)) \alpha_0 = \beta,$
hence
\begin{equation*}
(U \otimes (K\circ\psi))  \alpha = \beta
\end{equation*}
and the diagram above commutes.
To show that $\partial_{\alpha}$ is indeed a twisting differential one shows by induction that
\begin{equation*}
\delta( \alpha_m) = \sum_{a+b=m-1}\partial_{\alpha_a} \alpha_b,
\end{equation*}
which then yields the claim for $\alpha$.
\end{proof}

Recall that the Barratt-Eccles operad has an extension to finite sets and bijections given by
\begin{equation*}
\mcE(\underline e)_l = k<\Bij( \underline e,\underline r)^{l+1}>.
\end{equation*}
In our constructions we will need to choose a distinguished element in $\Bij(\underline e,\underline r)$ for all $\underline e$, corresponding to an ordering of $\underline e$.
We fix a choice of a family $(\tau_{\underline e})_{\underline e \in \Bij}$ such that for $\underline e \subset \mathbb{N}_0$ the element $\tau_{\underline e}$ corresponds to the canonical order.

\begin{proposition}\cite[1.4]{Fre11}\label{prop:mcEComHomotopyRetract}
The trivial fibration from the Barratt-Eccles operad $E$ to the commutative operad $\Com$
\begin{equation*}
\psi \colon E \ra \Com,
\end{equation*}
which we explicitly define by
\begin{equation*}
E(\underline e)_l \ni (\sigma_0,\dots,\sigma_l) \mapsto \begin{cases}
\mu_{\underline e}, & l=0,\\
0, & l \neq 0,
\end{cases}
\end{equation*}
admits a section $\iota \colon \Com \ra E$. This section is a morphism of arity-graded differential graded modules
defined by
\begin{equation*}
\Com(\underline e) \ni \mu_{\underline e} \mapsto \tau_{\underline{e}},
\end{equation*}
In addition, we have a $k$-linear homotopy $\nu \colon E \ra E$ between $\iota \psi$ and $\id_{E}$,
given by the mapping
\begin{equation*}
E(\underline e)_l \ni (\sigma_0,\dots,\sigma_l) \mapsto (\sigma_0,\dots,\sigma_l, \tau_{\underline{e}}),
\end{equation*}
and such that $\psi \nu =0$.
(Note that this is not a homotopy retract of operads since $\iota$ does not even preserve the action of bijections.)
\end{proposition}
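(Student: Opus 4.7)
The plan is to verify the four claims in sequence: $\psi$ is a morphism of operads, $\iota$ is a section compatible with the arity-graded differential graded module structure, $\nu$ is a chain homotopy between $\iota\psi$ and $\id_E$, and $\psi\nu=0$. The key input is the explicit description $\mcE(\underline e)_l = k\langle\Bij(\underline e,\underline r)^{l+1}\rangle$ together with the standard simplicial face formula
\[
 d(\sigma_0,\dots,\sigma_l) = \sum_{i=0}^{l}(-1)^i (\sigma_0,\dots,\widehat{\sigma_i},\dots,\sigma_l)
\]
for the differential, and the entry-wise definition of operadic composition in the Barratt--Eccles operad.

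First I would check that $\psi$ is a chain map: on simplicial degree zero both differentials vanish; in degree one the two faces of $(\sigma_0,\sigma_1)$ both collapse to $\mu_{\underline e}$ and cancel in the alternating sum; in higher degrees both sides are zero by construction. Multiplicativity holds because entry-wise composition sends pairs of $0$-simplices to $0$-simplices, and on $0$-simplices the groupoid composition projects onto the composition in $\Com$. For the section, $\iota$ lands in simplicial degree zero where the differential on $E$ vanishes, so compatibility with differentials is automatic, and $\psi\iota(\mu_{\underline e}) = \psi(\tau_{\underline e}) = \mu_{\underline e}$. The parenthetical caveat that $\iota$ is not operadic reduces to the observation that the chosen basepoints $\tau_{\underline e}$ are not compatible across different arities with operadic substitution of bijections.

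The bulk of the verification is the homotopy identity. Expanding $d\nu(\sigma_0,\dots,\sigma_l)$ and $\nu d(\sigma_0,\dots,\sigma_l)$ using the face formula, the internal terms arising from the omitted $\widehat{\sigma_i}$ appear in both expressions with the same signs and cancel termwise; what remains in the difference $d\nu - \nu d$ is the extremal face that removes the appended $\tau_{\underline e}$, producing $(\sigma_0,\dots,\sigma_l)$, together with the boundary contribution $\tau_{\underline e} = \iota\psi(\sigma_0)$ arising in the special case $l=0$ where the tuple has length one. Matching against $(\id-\iota\psi)(\sigma_0,\dots,\sigma_l)$ yields the claim. Finally, $\psi\nu = 0$ is immediate because the output of $\nu$ always has simplicial degree $\geq 1$, on which $\psi$ vanishes by definition.

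The main obstacle is purely a bookkeeping one: the Koszul-style sign arising from the fact that $\nu$ raises simplicial degree by one must be tracked together with the sign generated by inserting $\tau_{\underline e}$ at the right-hand end of the tuple, so that $d\nu - \nu d$ comes out with precisely the sign needed in Proposition~\ref{prop:lifting} rather than its negative. Once the signs are pinned down consistently, the proof is a routine unwinding of the Barratt--Eccles construction with no further structural input required.
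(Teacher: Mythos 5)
Your verification is correct and is the standard argument: the paper itself offers no proof of this statement, quoting it directly from \cite[1.4]{Fre11}, and the contraction you describe is exactly the classical extra-degeneracy contraction of $E\Sigma_r$ onto the point $\tau_{\underline e}$. The one caveat you flag is genuine: with the formula literally as displayed (appending $\tau_{\underline e}$ with no sign), one computes $d\nu-\nu d=(-1)^{l+1}\id$ on simplicial degree $l\geq 1$ and $\iota\psi-\id$ on degree $0$, so a degree-dependent sign must be built into $\nu$ to land exactly on $\id-\iota\psi$ as required by Proposition~\ref{prop:lifting}; your identification of the surviving extremal face and of the degree-zero contribution $\tau_{\underline e}=\iota\psi(\sigma_0)$ is otherwise exactly right, as is the observation that $\psi\nu=0$ because $\nu$ raises simplicial degree.
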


Hence we can lift $\partial_{\theta}+ U_{\Com}\otimes \partial_{\gamma}$ as desired:

\begin{proposition}
For $1\leq n \leq \infty$, there is a map $\lambda \colon \Sigma^{-n}T^n \ra U_{\Com} \otimes  (\Sigma^{-n}T^n\circ E) $ which induces a twisting differential $\partial_{\lambda}$ such that
\begin{equation*}
\xymatrix{ U_{\Com} \otimes  (\Sigma^{-n}T^n \circ E) \ar[rr]^-{\partial_{\lambda}} \ar[d]_-{U_{\Com} \otimes (\Sigma^{-n}T^n \circ \psi)} && U_{\Com} \otimes  (\Sigma^{-n}T^n \circ E)\ar[d]^-{U_{\Com} \otimes (\Sigma^{-n}T^n \circ \psi)}\\
U_{\Com} \otimes (\Sigma^{-n}T^n \circ {\Com} )\ar[rr]^-{\partial_{\theta}+ U_{\Com}\otimes \partial_{\gamma}} && U_{\Com} \otimes  (\Sigma^{-n}T^n\circ{\Com}) }
\end{equation*}
commutes.
\end{proposition}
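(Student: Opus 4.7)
The approach is to apply the generic lifting Proposition~\ref{prop:lifting} with $\mcR = E$, $\mcS = \Com$, $U = U_{\Com}$, and the homotopy retract data $(\iota,\psi,\nu)$ supplied by Proposition~\ref{prop:mcEComHomotopyRetract}. As the free $\Sigma_*$-module I take $K = \Sigma^{-n}T^n$, with underlying graded $k$-module $G$, and as the twisting cochain to be lifted I take
\begin{equation*}
\beta = \theta + \eta_{U_{\Com}}\otimes\gamma\colon G \to U_{\Com}\otimes(K\circ\Com),
\end{equation*}
where $\gamma$ denotes the twisting cochain encoding the right $\Com$-module differential on $B^n_{\Com} = (T^n\circ\Com,\partial_\gamma)$ and $\eta_{U_{\Com}}\colon k\to U_{\Com}$ is the unit. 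By construction the induced differential $\partial_\beta$ on $U_{\Com}\otimes(K\circ\Com)$ equals $\partial_\theta + U_{\Com}\otimes\partial_\gamma$, which is exactly the operator we wish to lift.

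I would then verify the hypotheses of Proposition~\ref{prop:lifting}. The retract identities for $(\iota,\psi,\nu)$ are provided by Proposition~\ref{prop:mcEComHomotopyRetract}; the internal chain-map condition $d\beta + \beta d = 0$ follows from the explicit formulas for $\theta$ and $\gamma$ together with the bar-differential identities already established in \cite{Fre11}; and the twisting-cochain identity $\partial_\beta\beta = 0$ is exactly the preceding lemma asserting that $\theta$ is a twisting cochain on $U_{\Com}\otimes B^n_{\Com}$. Nonnegative gradedness of $U_{\Com}$, $\Com$ and $\Sigma^{-n}T^n$ is standard; the latter is the key point that makes the recursion $\lambda_m = \sum_{a+b=m-1}\tilde\nu\,\partial_{\lambda_a}\lambda_b$ in the proof of Proposition~\ref{prop:lifting} terminate degreewise. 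This yields the desired $\lambda$ and commutative square for every finite $1\leq n<\infty$.

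For $n = \infty$ I would pass to the colimit. Because the suspension morphism $\Sigma B^n_{\Com}\to B^{n+1}_{\Com}$ commutes with $\partial_\theta$ by Proposition~\ref{defn:thetaForInfty} and with the $\Com$-module structure, and because the operators $\tilde\iota$ and $\tilde\nu$ appearing in the proof of Proposition~\ref{prop:lifting} are natural in the free-module input $K$, the recursive construction of each $\lambda_m$ on $\Sigma^{-n}T^n$ can be arranged to be strictly compatible with the suspension maps of the directed system $(\Sigma^{-n}T^n)_n$. Taking the colimit then produces the required map $\lambda\colon \Sigma^{-\infty}T^\infty \to U_{\Com}\otimes(\Sigma^{-\infty}T^\infty\circ E)$.

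The main obstacle is this last compatibility across the tower: one must check that the inductive choices for $\lambda_m$ at stage $n$ and at stage $n+1$ agree under the suspension morphisms, so that the colimit construction is well defined. This reduces to unpacking the naturality of $\tilde\iota$ and $\tilde\nu$ in $K$, plus the compatibility of $\beta = \theta + \eta_{U_{\Com}}\otimes\gamma$ with suspensions, both of which are direct but somewhat tedious to verify from the explicit definitions of $\theta$ and the retract data.
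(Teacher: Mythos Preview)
Your proposal is correct and matches the paper's approach exactly: the paper treats this proposition as an immediate application of Proposition~\ref{prop:lifting} to the data $\mcR=E$, $\mcS=\Com$, $U=U_{\Com}$, $K=\Sigma^{-n}T^n$, $\beta=\theta+\eta_{U_{\Com}}\otimes\gamma$, together with the retract $(\psi,\iota,\nu)$ of Proposition~\ref{prop:mcEComHomotopyRetract}, and gives no further argument. Your colimit treatment of the case $n=\infty$ is more elaborate than necessary, since $\Sigma^{-\infty}T^{\infty}=\colim_n\Sigma^{-n}T^n$ is itself a $\Sigma_*$-free, nonnegatively graded module to which Proposition~\ref{prop:lifting} applies directly; but your compatibility argument is not wrong, only superfluous.
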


%%%%%%%%%%%%%%%%%
%%%%%%%%%%%%%%%%%

\subsection*{Reminder on the complete graph operad}

As in \cite{Fre11} we will use that $E$ is equipped with a cell structure indexed by complete graphs to prove that $\partial_{\lambda}$ restricts to $U_{\Com} \otimes  (\Sigma^{-n} T^n \circ E_n)$ for $1\leq n < \infty$.
In this subsection we revisit the relevant definitions and results concerning this cell structure. The complete graph operad and its relation to $E_n$-operads has been discussed by Berger in \cite{Be96}. We assume that $n<\infty$ in this and the next subsection.

\begin{defn}
Let $\underline{e}$ be a finite set with $r$ elements. A complete graph $\kappa = (\sigma, \mu)$ on $\underline e$ consists of an ordering $\sigma \colon \underline e \ra \lbrace 1,\dots,r \rbrace$ together with a symmetric matrix  $\mu=(\mu_{ef})_{e,f \in \underline{e}}$ of elements $\mu_{ef} \in \mathbb{N}_0$ with all diagonal entries $0$. We think of $\sigma$ as a globally coherent orientation of the edges and of $\mu_{ef}$ as the weight of the edge connecting $e$ and $f$.
\end{defn}

\begin{example}
The complete graph
\begin{equation*}\begin{picture}(100, 60)
\put(20,20){\circle{20}}
\put(80,20){\circle{20}}
\put(50,52){\circle{20}}

\put(30,20){\vector(1,0){40}}
\put(22,40){4}
\put(20,30){\vector(1,1){20}}
\put(72,40){0}
\put(80,30){\vector(-1,1){20}}
\put(46,11){3}

\put(17,18){g}
\put(77,18){f}
\put(47,50){e}
\end{picture}\end{equation*}
on the set $\lbrace e,f,g \rbrace$ corresponds to
\begin{equation*}\sigma\colon  \lbrace e,f,g \rbrace \ra \lbrace 1,2,3 \rbrace, \quad \sigma(e)=3, \sigma(f)= 2, \sigma(g) =1\end{equation*}
and $\mu_{ef}=0, \mu_{eg}=4, \mu_{fg}=3.$
\end{example}

For $\sigma$ as above and $e,f \in \underline e$ let $\sigma_{ef} = \id \in \Sigma_2$ if $\sigma(e) < \sigma(f)$. Otherwise, define $\sigma_{ef}$ to be the transposition in $\Sigma_2$.

\begin{defn}
The set of complete graphs on $\underline{e}$ is partially ordered if we set
\begin{equation*}(\sigma, \mu) \leq (\sigma' , \mu')\end{equation*}
whenever for all $e,f \in \underline{e}$ either $\mu_{ef} < \mu'_{ef}$ or $(\sigma_{ef},\mu_{ef}) = (\sigma'_{ef}, \mu'_{ef})$.
The poset of complete graphs on $\underline e$ is denoted by $\mcK(\underline e)$.
\end{defn}

\begin{proposition}\cite{Be96}
The collection $\mcK=(\mcK(\underline e))_{\underline e}$ of complete graphs forms an operad in posets: A bijection $\omega \colon \underline e \ra \underline e'$  acts by relabeling the vertices. The partial composition
\begin{equation*}
\circ_{e} \colon \mcK(\underline e) \times \mcK(\underline f) \ra \mcK(\underline e \sqcup \underline f \setminus \lbrace e \rbrace)
\end{equation*}
is given by substituting the vertex $e$ in a complete graph $(\sigma,\mu)\in \mcK(\underline e)$ by the complete graph $(\tau, \nu) \in \mcK(\underline f)$, \ie by inserting $(\tau, \nu)$ at the position of $e$, orienting the edges between $g \in \underline{e} \setminus \lbrace e \rbrace$ and $g'\in \underline{f}$ like the edge between $g$ and $e$ and giving them the weight $\mu_{g e}$.
\end{proposition}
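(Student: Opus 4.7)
The plan is to verify in turn that the partial composition is well-defined as a map to $\mcK(\underline e \sqcup \underline f \setminus \lbrace e \rbrace)$, that it is monotone with respect to the product order, and that together with the evident bijection action and the one-vertex unit it satisfies the operad axioms.

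First I would spell out the substitution $(\sigma,\mu) \circ_e (\tau,\nu) = (\sigma \circ_e \tau,\, \mu \circ_e \nu)$ in full detail. On the vertex set $\underline e \sqcup \underline f \setminus \lbrace e \rbrace$ the ordering $\sigma \circ_e \tau$ places the elements of $\underline f$ consecutively in the slot previously occupied by $e$, ordered internally according to $\tau$, and leaves the relative order on $\underline e \setminus \lbrace e \rbrace$ unchanged. The weight matrix $\mu \circ_e \nu$ copies $\mu$ on pairs from $\underline e \setminus \lbrace e \rbrace$, copies $\nu$ on pairs from $\underline f$, and sets $(\mu \circ_e \nu)_{g g'} = \mu_{g e}$ when $g \in \underline e \setminus \lbrace e \rbrace$ and $g' \in \underline f$. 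One checks directly that $\sigma \circ_e \tau$ is a bijection onto $\lbrace 1,\dots, \vert \underline e \vert + \vert \underline f \vert - 1\rbrace$ and that $\mu \circ_e \nu$ is symmetric with vanishing diagonal, so the output is indeed a complete graph on the expected vertex set.

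Next I would establish that $\circ_e$ is order-preserving in both arguments. Fix $\kappa' \in \mcK(\underline f)$ and suppose $\kappa \leq \bar\kappa$ in $\mcK(\underline e)$. For any pair of vertices $(g,g')$ in the composite, the comparison of $((\sigma \circ_e \tau)_{g g'}, (\mu \circ_e \nu)_{g g'})$ with $((\bar\sigma \circ_e \tau)_{g g'}, (\bar\mu \circ_e \nu)_{g g'})$ reduces to one of three cases: the pair of $(\sigma_{g g'},\mu_{g g'})$ with $(\bar\sigma_{g g'},\bar\mu_{g g'})$ when $g,g' \in \underline e \setminus\lbrace e\rbrace$, the pair of $(\sigma_{g e},\mu_{g e})$ with $(\bar\sigma_{g e},\bar\mu_{g e})$ when $g \in \underline e \setminus\lbrace e\rbrace$ and $g' \in \underline f$, or a trivial equality when $g,g' \in \underline f$. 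In each case the hypothesis $\kappa \leq \bar\kappa$ gives either strict inequality of weights or equality of the pair, which is what the definition of $\leq$ requires. The symmetric argument in the second variable, where only the intrinsic comparison on $\underline f$ and a trivially equal comparison on the outer block contribute, gives monotonicity in $\kappa'$.

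The remaining task is to check the operad axioms. Equivariance with respect to bijections $\underline e \ra \underline e'$, $\underline f \ra \underline f'$ is immediate since relabeling commutes with both the weight-copying and the ordering rule. The one-vertex graph in $\mcK(\lbrace \ast \rbrace)$ acts as a two-sided unit because the substitution leaves the other graph untouched: substituting it at a vertex neither introduces new edges nor alters the ordering, and substituting into its single vertex reproduces the inserted graph verbatim. The main obstacle, as usual, is the verification of the two forms of associativity for partial composition. Here one has to check that performing $\circ_e$ followed by $\circ_{e'}$ for two distinct vertices $e,e' \in \underline e$, or followed by $\circ_{e'}$ for $e' \in \underline f$ (the nested and parallel cases), yields the same complete graph whichever order is used. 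For the weights this is a direct bookkeeping: a weight between a vertex coming from the outermost graph and a vertex coming from the innermost graph is $\mu_{g e}$ in both evaluation orders, because the intermediate substitution preserves the label $e$ until it is consumed. For the orderings one uses that each substitution only modifies the slot of the substituted vertex, so the three blocks end up in the same order regardless of which nesting is chosen. This is routine but tedious case analysis, which is why the result is cited from Berger's \cite{Be96}.
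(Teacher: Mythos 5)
Your verification is correct and is the standard one: the paper itself offers no proof of this proposition, simply citing Berger's \cite{Be96}, and your case-by-case check of well-definedness, monotonicity of $\circ_e$ in each variable (the only point specific to working in posets), equivariance, unitality, and the nested/parallel associativity bookkeeping is exactly the routine argument the citation stands in for. No gaps.
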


\begin{defn}\cite[3.4,3.8]{Fre11}
Let $(\mcP(\underline e))_{\underline e}$ be a collection of functors $\mcK(\underline e) \ra \dgmod$. Such a collection is called a $\mcK$-operad if for all finite sets $\underline e$ and all bijections $\omega \colon \underline e' \ra \underline e$ there is a natural transformation $\mcP(\underline e) \ra \mcP(\underline e')$ with components
\begin{equation*}
\mcP_{\kappa} \ra \mcP_{ \kappa .\omega}
\end{equation*}
for $\kappa \in \mcK(\underline e)$,
as well as partial composition products given by natural transformations
\begin{equation*}
\circ_{e} \colon \mcP(\underline e) \otimes \mcP(\underline f) \ra \mcP(\underline{e} \setminus \lbrace e \rbrace \sqcup \underline{f})
\end{equation*}
for $e \in \underline e$ with components
\begin{equation*}
\circ_{e} \colon \mcP_{\kappa} \otimes \mcP_{\kappa'} \ra \mcP_{\kappa \circ_{e} \kappa'},
\end{equation*}
satisfying suitable associativity, unitality and equivariance conditions. A morphism $\mcP \ra \mcP'$ consists of natural transformations with components $\mcP_{\kappa} \ra \mcP'_{\kappa}$ commuting with composition and the action of bijections.

Similarly, a right $\mcK$-module $R$ over a $\mcK$-operad consists of collections $(R(\underline e))_{\underline e}$ together with natural transformations
$R_{\underline e} \ra R_{\underline e'}$
for all bijections $\omega \colon \underline e' \ra \underline e$ with components
\begin{equation*}
R_{\kappa} \ra R_{ \kappa. \omega},
\end{equation*}
as well as natural transformations
\begin{equation*}
\circ_{e} \colon \mcP(\underline e) \otimes R(\underline f) \ra R(\underline{e} \setminus \lbrace e \rbrace \sqcup \underline{f})
\end{equation*}
with components
\begin{equation*}
\circ_e \colon R_{\kappa} \otimes \mcP_{\kappa'} \ra R_{\kappa \circ_{e} \kappa'}
\end{equation*}
for $e \in \underline e$  satisfying again suitable associativity, unitality and equivariance conditions.
\end{defn}

\begin{proposition}\cite[3.4,3.8]{Fre11}
There is an adjunction
\begin{equation*}
\xymatrix{ \colim \colon \mcO_{\dgmod} \ar@<.5ex>[r] & \ar@<.5ex>[l] \KO \colon \const }
\end{equation*}
between the category $\mcO_{\dgmod}$ of operads in differential graded $k$-modules and the category $\KO$ of $\mcK$-operads defined as follows: For a given $\mcK$-operad $\mcP$ let
\begin{equation*}
(\colim \mcP)(\underline{e}) = \colim_{\kappa \in \mcK(\underline e)} \mcP_{\kappa},
\end{equation*}
while for an ordinary operad $\mcQ$ we set
\begin{equation*}
\const(\mcQ)_{\kappa} = \mcQ(\underline e)
\end{equation*}
for $\kappa \in \mcK(\underline e)$. We say that an operad $\mcQ$ has a $\mcK$-structure if $\mcQ = \colim \mcP$.
A similar adjunction exists between the category of right $\mcK$-modules over a $\mcK$-operad $\mcP$ and the category of right modules over $\colim \mcP$, and we call right modules of the form $\colim R$ right $\colim \mcP$-modules with $\mcK$-structure.
\end{proposition}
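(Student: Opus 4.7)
The plan is formal: the adjunction follows from the universal property of colimits once both functors are shown to be well-defined. The argument breaks into three steps.

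First, verify that $\const$ and $\colim$ take values in the expected categories. For $\const$: given an operad $\mcQ$, the assignment $\kappa \mapsto \mcQ(\underline e)$ with identity transition maps defines a functor $\mcK(\underline e) \to \dgmod$; the bijection action, partial composition products, and all axioms are inherited directly from $\mcQ$, so $\const(\mcQ)$ is a $\mcK$-operad. For $\colim$: set $(\colim \mcP)(\underline e) = \colim_{\kappa \in \mcK(\underline e)} \mcP_{\kappa}$. The bijection action comes from assembling the natural transformations $\mcP_{\kappa} \to \mcP_{\kappa.\omega}$ by functoriality of the colimit. For the partial composition, exploit that $\mcK$ is itself an operad: the components $\mcP_{\kappa} \otimes \mcP_{\kappa'} \to \mcP_{\kappa \circ_e \kappa'}$ assemble into a cocone over $\mcK(\underline e) \times \mcK(\underline f)$, and since tensor product in $\dgmod$ commutes with colimits (being a left adjoint in each variable), this cocone induces the desired map $(\colim \mcP)(\underline e) \otimes (\colim \mcP)(\underline f) \to (\colim \mcP)(\underline e \setminus \{e\} \sqcup \underline f)$. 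Associativity, unitality and equivariance on $\colim \mcP$ then follow from the universal property together with the corresponding axioms on $\mcP$ and $\mcK$.

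Second, establish the bijection $\Hom_{\mcO_{\dgmod}}(\colim \mcP, \mcQ) \cong \Hom_{\KO}(\mcP, \const \mcQ)$. At each arity $\underline e$, a $\dgmod$-morphism $\colim_{\kappa} \mcP_{\kappa} \to \mcQ(\underline e)$ corresponds by the universal property of the colimit to a cocone $(\mcP_{\kappa} \to \mcQ(\underline e))_{\kappa}$, i.e., to a natural transformation $\mcP_{\bullet} \to \const(\mcQ)_{\bullet}$. Compatibility with partial composition and bijection action passes across this bijection, because the operadic structure on $\colim \mcP$ was built from the components of $\mcP$ precisely so that operad morphisms out of $\colim \mcP$ are equivalent to families of compatible $\mcK$-natural maps on the $\mcP_{\kappa}$.

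Third, the right-module case is entirely parallel: $(\colim R)(\underline e) = \colim_{\kappa} R_{\kappa}$ carries a right action of $\colim \mcP$ coming from the cocone $R_{\kappa} \otimes \mcP_{\kappa'} \to R_{\kappa \circ_e \kappa'}$, and the same universal-property argument delivers the adjunction. There is no genuine obstacle — the entire statement is a categorical exercise, which is why it can be cited from \cite{Fre11}. The only care point is that the relevant colimits live in $\dgmod$ and must interact well with tensor products, but this is automatic since $\otimes$ is a left adjoint in each variable. Verifying the remaining operadic and module axioms is a matter of tracking definitions through the universal property, with no extra input required.
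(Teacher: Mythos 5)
Your argument is correct and is precisely the standard universal-property argument that the paper delegates to the citation \cite[3.4, 3.8]{Fre11}: $\const(\mcQ)$ is the constant $\mcK(\underline e)$-diagram, the composition products on $\colim\mcP$ are induced from the cocones $\mcP_{\kappa}\otimes\mcP_{\kappa'}\to\mcP_{\kappa\circ_e\kappa'}$ using that $\otimes$ preserves colimits in each variable, and the hom-set bijection is the usual $\colim\dashv\const$ adjunction checked to be compatible with the operadic structure. No gaps; the module case is, as you say, identical.
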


\begin{example} \cite[3.5]{Fre11}
Besides considering constant $\mcK$-operads the main example we are interested in is the Barratt-Eccles operad $E$. Concretely, for a finite set $\underline e$ with $r$ elements and $\kappa=(\sigma,\mu) \in \mcK(\underline e)$ an element $(\omega_0,\dots,\omega_l) \in \Bij(\underline e, \underline r)$ is in $\mcE_{\kappa}$
if for all $e,f \in \underline e$ the sequence
\begin{equation*}
((\omega_0)_{ef},\dots,(\omega_l)_{ef})
\end{equation*}
has either less than $\mu_{ef}$ variations or has exactly $\mu_{ef}$ variations and $(\omega_l)_{ef}=\sigma_{ef}$.
Observe that in
\begin{equation*}
\xymatrix{ \ar@(ul,dl)[]_-{\nu} E \ar@<1ex>[r]^-{\psi} & \ar@<1ex>[l]^-{\iota} \Com, }
\end{equation*}
the map $\psi$ respects the $\mcK$-structures on $E$ and $\Com$ and that for  $\kappa= (\tau_{e},\mu)$ we have
\begin{equation*}
\iota(\Com_{\kappa}) \subset E_{\kappa} \quad \text{and} \quad \nu(E_{\kappa}) \subset E_{\kappa}.
\end{equation*}
\end{example}

\begin{defn}
Let $\mcK_n(r)$ be the poset of complete graphs $\kappa=(\sigma, \mu)$ such that $\mu_{ij} \leq n-1$ for all pairs $i,j \in \underline r$. This defines a filtration
\begin{equation*}
\mcK_1 \subset \dots \subset \mcK_n \subset \mcK_{n+1 } \subset \dots \subset \mcK=\colim_n \mcK_n
\end{equation*}
of $\mcK$ by suboperads.
\end{defn}

\begin{remark}
Considering $\mcK$-structures allows more control over the operads in question. A closer look at the $\mcK$-structure of $E$ yields that $\colim_{\kappa \in \mcK_n} E_{\kappa}  = E_n$: If $x=(\omega_0,\dots,\omega_l)\in \Sigma_{r}^{l+1}$ is in $E_n$, then $((\omega_0)_{ij},\dots,(\omega_l)_{ij})$ has at most $n-1$ variations for all $i,j\in \underline r$. Hence $x\in E_{\kappa}$ for $\kappa=(\omega_l,\mu)$ with $\mu_{ef}=n-1$ for all $i,j\in \underline r$. This will allow us to show that the differentials we are interested in restrict to $E_n$.
\end{remark}

%%%%%%%%%%%%
%%%%%%%%%%%%

\subsection*{Descending the twisted module to $E_n$-modules}

We assume that $n<\infty$ in this subsection.
In \cite{Fre11} the first author constructs a twisting differential $\partial_{\epsilon}$ on $\Sigma^{-n} T^n\circ E$. It is then shown that $\partial_{\epsilon}$ descends to $\Sigma^{-n} T^n \circ   E_n$  by using that $T^n$ can be interpreted as a $\mcK$-diagram and that hence $\Sigma^{-n}T^n \circ  E$ is a right $E$-module with $\mcK$-structure. We aim to prove a similar statement for the twisting differential $\partial_{\lambda}$. To achieve this, we will use the same $\mcK$-structures, hence we now recall the relevant definition.

For a complete graph $\kappa=(\sigma, \mu) \in \mcK(\underline e)$ and $\underline f \subset \underline e$ let
$\kappa\vert_{ \underline f} = (\sigma', \mu_{\underline f \times \underline f})$
be the complete subgraph of $\kappa$ with $\underline f\subset\underline e$
as vertex set
together with the ordering $\sigma' \colon \lbrace 1,\dots, \vert \underline f \vert \rbrace \ra \underline f$
defined by the composite
\begin{equation*}
\xymatrix{\lbrace 1,\dots, \vert \underline f \vert \rbrace \ar[r]^-{\cong} & \sigma^{-1}( \underline f) \ar[r]^-{\sigma} & \underline f.}
\end{equation*}

\begin{proposition}\cite[4.2]{Fre11}
There is a $\mcK$-diagram associated to $T^n$ defined as follows: For $n=1$ and $\kappa= (\sigma, \mu) \in \mcK(\underline{e})$ set $T^n_{\kappa} = k \cdot \sigma \subset T^1(\underline e)$.
For general $n$ and $\kappa=(\sigma, \mu) \in \mcK(\underline e)$ an element
\begin{equation*}
sx_1 \otimes\dots\otimes sx_l \in \Sigma T^{n-1}(\underline{e}_1) \otimes \dots \otimes \Sigma T^{n-1}(\underline{e}_l) \subset \overline{T}^c(\Sigma T^{n-1})(\underline e_1 \sqcup \dots \sqcup \underline e_l)
\end{equation*}
is in $T^n_{\kappa}$  if the following conditions hold:
\begin{enumerate}
\item For $1\leq i \leq l$ we have that $x_i$ is an element of $ T^{n-1}_{\kappa\vert_{\underline{e_i}}}$.
\item If $e,f \in \underline e$ with $\mu_{ef} < n-1$ then there exists $i$ such that $e,f \in \underline{e_i}.$
\item If $e,f\in \underline e$ with $\mu_{ef} = n-1$ and if $e \in \underline{e_i}, f \in \underline{e_j}$ with $i < j$ then $\sigma_{ef} = \id$.
\end{enumerate}
\end{proposition}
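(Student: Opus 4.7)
The plan is to prove this by induction on $n$, verifying for each finite set $\underline e$ that the assignment $\kappa \mapsto T^n_\kappa$ extends to a functor $\mcK(\underline e) \to \dgmod$, is natural with respect to bijections of $\underline e$, and has colimit $T^n(\underline e)$. For the base case $n=1$, one uses the direct sum decomposition $T^1(\underline e) = \bigoplus_\sigma k\cdot\sigma$ together with the fact that the structure morphism $T^1_\kappa \to T^1_{\kappa'}$ is canonically determined by the Barratt--Eccles framework relating the orderings $\sigma$ and $\sigma'$, and that every generator of $T^1(\underline e)$ is reached by some $\kappa$.

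For the inductive step, fix $\kappa \leq \kappa'$ in $\mcK(\underline e)$ and a generator $sx_1 \otimes \cdots \otimes sx_l \in T^n_\kappa$. Condition (1) for $\kappa'$ follows from the inductive hypothesis applied to $\kappa|_{\underline{e_i}} \leq \kappa'|_{\underline{e_i}}$, which is immediate from the definitions of restriction and the partial order. Condition (2) is preserved because $\mu'_{ef} < n-1$ forces $\mu_{ef} \leq \mu'_{ef} < n-1$, so the pair was already placed in a common block by (2) for $\kappa$. Condition (3) requires a case analysis: given $\mu'_{ef} = n-1$ and $e \in \underline{e_i}$, $f \in \underline{e_j}$ with $i < j$, either $\mu_{ef} = n-1$ as well (and the poset relation forces $\sigma'_{ef} = \sigma_{ef} = \mathrm{id}$ using (3) for $\kappa$), or $\mu_{ef} < n-1$, in which case (2) for $\kappa$ would place $e, f$ in a common block, contradicting $i<j$. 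Naturality in bijections is direct by transport of labels. For the colimit claim, one takes the ``maximal'' $\kappa$ with $\mu_{ef} = n-1$ everywhere and $\sigma$ listing blocks in order (internally compatible with the restrictions $\kappa|_{\underline{e_i}}$ needed to house $x_i$ by induction), which manifestly contains any prescribed generator.

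The main obstacle is the case analysis underlying condition (3) in the inductive step, where the change of orientation $\sigma_{ef}$ under a strict weight increase has to be rendered harmless via condition (2) for $\kappa$; the rest of the proof is a matter of carefully tracking the combinatorial definitions and checking that the structural maps of the $\mcK$-diagram obtained from the iterated bar construction respect the recursive description of $T^n$ as $(\overline{T}{}^c\Sigma)(T^{n-1})$.
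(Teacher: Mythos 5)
Your inductive step is correct, and it is essentially the argument that the paper is implicitly deferring to (the proposition is only recalled here from \cite[4.2]{Fre11}; no proof is given in this paper). In particular, the observation that restriction is compatible with the partial order, so that $\kappa\vert_{\underline{e_i}}\leq\kappa'\vert_{\underline{e_i}}$ handles condition (a) by induction, and the case split for condition (c) — where a strict drop $\mu_{ef}<n-1$ is ruled out by invoking condition (b) for $\kappa$ — is exactly the crux of monotonicity.

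The genuine gap is the base case, which you dispose of with an appeal to ``the Barratt--Eccles framework'' that does not actually produce the required structure maps. Taking the definition literally, $T^1_{(\sigma,\mu)}=k\cdot\sigma$, functoriality fails: one has $(\sigma,\mu)\leq(\sigma',\mu')$ with $\sigma\neq\sigma'$ as soon as $\mu_{ef}<\mu'_{ef}$ for all $e,f$, yet $k\cdot\sigma\not\subset k\cdot\sigma'$ inside $T^1(\underline e)=\bigoplus_{\omega}k\cdot\omega$; and since $\mcK(\underline e)$ is a directed poset, any choice of non-inclusion structure maps yields a filtered colimit of copies of $k$, whose finitely generated submodules are all cyclic, so it cannot recover $T^1(\underline e)\cong k[\Sigma_s]$ for $s\geq 2$. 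The correct base case (the one actually used in \cite{Fre11}) takes $T^1_\kappa$ to be the degree-zero part of the Barratt--Eccles cell $E_\kappa$, namely the span of all orderings $\omega$ such that $\omega_{ef}=\sigma_{ef}$ whenever $\mu_{ef}=0$; then $k\cdot\sigma$ is only the minimal cell $\mu\equiv 0$, the inclusions $T^1_\kappa\subset T^1_{\kappa'}$ hold, and your inductive step goes through verbatim. This is not a cosmetic point: Lemma \ref{prop:LambdaRespectsComKappa} and Proposition \ref{prop:LambdaRestrictsToEn} use that the $T^n_\kappa$ are submodules with $T^n_\kappa\subset T^n_{\kappa'}$ for $\kappa\leq\kappa'$ and that $T^n(\underline e)$ is their directed union. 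Your colimit argument via the graph with all weights equal to $n-1$ and $\sigma$ the planar order of the tree is fine for the generators, but it too rests on the monotonicity you have only established from level $2$ upward, hence on repairing the base case.
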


\begin{remark}
The idea behind this definition is the following (see \cite{FrApp}): Interpreting $t \in T^n(\underline{e})$ as a tree, the smallest complete graph $\kappa$ with $t \in T^n_{\kappa}$ has vertices ordered like the inputs of $t$ and weights $\mu_{ef}$ such that $n-1-\mu_{ef}$ equals the level on which the paths from $e$ and $f$ to the root first join.
\end{remark}

For a $\mcK$-operad $\mcP$ and  $\kappa$ a complete graph let $(T^n  \circ  \colim \mcP)_{\kappa}$ be generated as a $k$-module by
elements $t(p_1,\dots,p_l) \in  T^n \circ \colim \mcP$ such that $t \in T^n_{\kappa'}, p_i \in \mcP_{\kappa_i}$ with
$\kappa'(\kappa_1,\dots,\kappa_l) \leq \kappa$.
This makes $T^n \circ  \colim \mcP $  a right $ \colim \mcP$-module with $\mcK$-structure.

\begin{lemma}\label{prop:LambdaRespectsComKappa}
The twist $\partial_{\theta}$ satifies
\begin{equation*}
\partial_{\theta}( (T^n   \circ {\Com})_{\kappa} )\subset \bigoplus_{e\in \underline e}  U_{\Com}(\lbrace e\rbrace ) \otimes (T^n   \circ {\Com})_{\kappa\vert (\underline{e}\setminus \lbrace e \rbrace)}
\end{equation*}
for $\kappa = (\sigma, \mu) \in \mcK(\underline e)$.
\end{lemma}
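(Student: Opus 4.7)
The approach reduces the statement to a combinatorial fact about removing leaves from labeled trees in $T^n$, after which the rest is read off from the formula for $\partial_\theta$.

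First I would unpack Definition~\ref{defn:theta}. On $t(a_0,\dots,a_{r_n}) \in T^n\circ\Com$, with $t \in T^n(\underline e)$ labeled by $\sigma\colon \underline e\to [r_n]$, the map $\partial_\theta$ is a signed sum of terms of the form
\begin{equation*}
\pm\,\mu^U_{\{\sigma^{-1}(x)\}}\otimes (t\setminus x,\sigma'_{\sigma^{-1}(x)})(a_0,\dots,\hat{a_x},\dots,a_{r_n}),
\end{equation*}
where $x$ ranges over leaves whose level-$1$ fiber has more than one element. Setting $e:=\sigma^{-1}(x)\in\underline e$, the factor $\mu^U_{\{e\}}$ lies by construction in $U_\Com(\{e\})$, matching the indexing of the direct sum on the right-hand side. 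It therefore suffices to prove that the surviving tensor factor $(t\setminus x,\sigma'_{\sigma^{-1}(x)})(a_0,\dots,\hat{a_x},\dots,a_{r_n})$ lies in $(T^n\circ\Com)_{\kappa|_{\underline e\setminus\{e\}}}$.

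The key combinatorial statement is: if $(t,\sigma)\in T^n_{\kappa'}(\underline e)$ and $e\in\underline e$ corresponds to a leaf with a sibling, then $(t\setminus x,\sigma'_{\sigma^{-1}(x)})\in T^n(\underline e\setminus\{e\})$ belongs to $T^n_{\kappa'|_{\underline e\setminus\{e\}}}$. This is most transparent through the tree-theoretic interpretation of the $\mcK$-diagram on $T^n$ recalled in the remark following its definition: the minimal complete graph associated to $t$ is determined by the planar ordering of leaves and by the levels $\ell(e',f')$ at which the paths from leaves $e',f'$ join in $t$, and both pieces of data restrict to $\underline e\setminus\{e\}$ without change when a single leaf is deleted. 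Passing from the minimal $\kappa'$ to general $\kappa'\leq\kappa$ follows because restriction preserves the partial order on $\mcK(\underline e)$. The operadic composition with $\Com$-labels enters only as bookkeeping of the arities of the remaining $a_i$ and contributes no new constraint.

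For a formal verification I would induct on $n$ using the three-condition inductive description of $T^n_{\kappa'}$. The base case $n=1$, where $T^1_{(\sigma,\mu)}=k\cdot\sigma$, is immediate. For the inductive step, write $t=sx_1\otimes\cdots\otimes sx_l$ with $x_i\in T^{n-1}(\underline{e_i})$; the sibling hypothesis forces the removed leaf to lie in a single block $\underline{e_j}$ with $|\underline{e_j}|\geq 2$. Conditions (2) and (3) constrain only pairs of leaves in distinct blocks, so they are manifestly preserved under restriction, while condition (1) reduces to the inductive hypothesis applied to $x_j\in T^{n-1}(\underline{e_j})$. The main subtlety is the correct matching of the sibling condition $|f_n^{-1}(l)|>1$ in Definition~\ref{defn:theta} with the size-at-least-$2$ condition on the relevant block of the level-$1$ partition; this uses only the bijection between the level-$1$ vertices of $t$ and the blocks $\underline{e_1},\dots,\underline{e_l}$ and requires no additional input.
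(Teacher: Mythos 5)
Your proof is correct and follows essentially the same route as the paper's: induction on $n$, with the explicit base case $T^1_\kappa=k\cdot\sigma$ and an inductive step exploiting that $\partial_\theta$ acts factor-wise on the decomposition $t=sx_1\otimes\dots\otimes sx_l$ together with the three-condition description of $T^n_\kappa$. The informal argument via the minimal complete graph of a tree is an added heuristic, but the formal verification coincides with the paper's (terser) proof.
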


\begin{proof}
We proceed by induction. For $n=1$ we see that $x \in T^1_{\kappa}$ if and only if  $x$ corresponds to the $1$-level tree $t_r(e_1,\dots,e_r)$ with $r$ leaves decorated by $e_1,\dots,e_r$, where $\sigma^{-1}(i) =e_i$. If $r>1$ the map $\theta$ sends $t_r(e_1,\dots,e_r)$ to
\begin{equation*}
- \mu_{\lbrace e_1\rbrace }^U \otimes t_{r-1}(e_2,\dots,e_r)  +(-1)^r \mu_{\lbrace e_r\rbrace }^U \otimes t_{r-1}(e_1,\dots,e_{r-1}).
\end{equation*}
Denote by $\partial_{\theta}^{(a)}$ the morphism $\partial_{\theta}$ defined on $T^a\circ {\Com}$. For $n>1$ observe that
\begin{equation*}
\partial_{\theta}^{(n)}(sx_1 \otimes\dots\otimes sx_r) =  \sum_j \pm sx_1 \otimes \dots \otimes s\partial_{\theta}^{(n-1)}(x_j) \otimes \dots \otimes sx_r
\end{equation*}
for $sx_1 \otimes \dots \otimes sx_r \in (\Sigma T^{n-1})^{\otimes r}$.
Let $x_i \in T^{n-1}_{\underline{f_i}}$ for all $i$. By the induction hypothesis
\begin{equation*}
\partial_{\theta} (x_i) \in \bigoplus_{e \in \underline{f}_i}  U_{\Com}(\lbrace e\rbrace ) \otimes (T^{n-1}  \circ {\Com})_{\kappa\vert \underline{f}_i\setminus \lbrace e \rbrace},
\end{equation*}
which yields the claim.
\end{proof}

We already noted that $T^n$ is $\Sigma_*$-free: There are graded modules $G(\underline e)$ with
$T^n(\underline e)\cong G(\underline e) \otimes \Sigma_{\underline e}$.
The functor $G$ is defined inductively by
\begin{equation*}
G^n=\bigoplus_{i\geq 1}(\Sigma G^{n-1})^{\otimes i}\quad\text{for $n>0$},
\quad\text{and}
\quad
G^0(\underline e) = \begin{cases}
k, &\vert \underline e \vert =1,\\
0, & \vert \underline e \vert \neq 1.
\end{cases}
\end{equation*}
Intuitively, $G$ associates to a finite set $\underline e$ the set of trees with $\vert \underline e\vert$ leaves with a tree $t\in G(r)$ having degree equal to the number of its edges. The inclusion $G^n(\underline e) \ra T^n(\underline e)$ is given by mapping a tree $t$ to the tree with leaves labeled by $\underline e$ according to the chosen order $\tau_{\underline e}$.

We now deduce from Proposition \ref{prop:LambdaRespectsComKappa} that $\partial_{\lambda}$ restricts to $U_{\Com} \otimes (\Sigma^{-n}T^n \circ E_n)$. Note that  the Lemmata \ref{lemma:Lambda0}, \ref{lemma:respectfulPartialLambda} and Proposition \ref{prop:LambdaRestrictsToEn} are completely analougous to \cite[5.2]{Fre04}, \cite[5.3]{Fre04} and \cite[5.4]{Fre04}.

For a complete graph $\kappa$ denote by $G^n_{\kappa}$ the arity-graded $k$-submodule of $G^n$ generated by elements $g \in G^n$ with $g \in T^n_{\kappa}$.

\begin{lemma}\label{lemma:Lambda0}
The map $\lambda_0$ satisfies
\begin{equation*}
\lambda_0 (G^n_{\kappa}) \subset \bigoplus_{\underline{e} = \underline{e}' \sqcup \underline{e}''}  U_{\Com}(\underline{e}' ) \otimes (T^n  \circ  E)_{\kappa\vert_{ \underline{e}''} }
\end{equation*}
for $\kappa =(\tau_{\underline e}, \mu) \in \mcK(\underline e)$ with $\underline e \subset \mathbb{N}_0$.
\end{lemma}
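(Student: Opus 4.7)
The plan is to unwind the recursive construction of $\lambda$ from the proof of Proposition \ref{prop:lifting}. Specialised to our data ($\mcS = \Com$, $\mcR = E$, $U = U_{\Com}$, $K = \Sigma^{-n} T^n$, and $\beta = \theta + \eta_U \otimes \gamma$), the base term reads $\lambda_0 = \tilde \iota \circ \beta$ on $G^n$. Thus it suffices to verify two inclusions: first, that $\beta$ sends $G^n_\kappa$ into $\bigoplus_{\underline e = \underline{e}' \sqcup \underline{e}''} U_{\Com}(\underline{e}') \otimes (T^n \circ \Com)_{\kappa\vert_{\underline{e}''}}$; and second, that $\tilde \iota$ maps each summand $(T^n \circ \Com)_{\kappa\vert_{\underline{e}''}}$ into $(T^n \circ E)_{\kappa\vert_{\underline{e}''}}$.

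For the first inclusion, Proposition \ref{prop:LambdaRespectsComKappa} applied to $G^n_\kappa \subset T^n_\kappa$ gives $\theta(G^n_\kappa) \subset \bigoplus_{e \in \underline e} U_{\Com}(\lbrace e\rbrace) \otimes (T^n \circ \Com)_{\kappa\vert_{\underline e \setminus \lbrace e \rbrace}}$, which covers all terms with $\vert \underline{e}' \vert = 1$. For the $\gamma$-contribution we invoke the $\mcK$-compatibility of the iterated bar differential with trivial coefficients, proved in \cite{Fre11}: namely $\gamma(G^n_\kappa) \subset (T^n \circ \Com)_\kappa$, which yields the summand with $\underline{e}' = \emptyset$.

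The second inclusion rests on the compatibility recorded in the example following Proposition \ref{prop:mcEComHomotopyRetract}: $\iota(\Com_{\kappa_0}) \subset E_{\kappa_0}$ for $\kappa_0 = (\tau_{\underline f}, \cdot)$ of canonical orientation. Since $\underline e \subset \mathbb{N}_0$, every restriction $\kappa\vert_{\underline{e}''}$ is of the canonical form $(\tau_{\underline{e}''}, \cdot)$, and an element of $(T^n \circ \Com)_{\kappa\vert_{\underline{e}''}}$ decomposes as $t(\mu_{\underline{f}_1}, \ldots, \mu_{\underline{f}_l})$ with $t \in G^n$. Each leaf block $\underline{f}_i$ thus inherits the canonical ordering, and the partial graphs $\kappa_i$ appearing in a decomposition $\kappa'(\kappa_1, \ldots, \kappa_l) \leq \kappa\vert_{\underline{e}''}$ may be chosen of the form $(\tau_{\underline{f}_i}, \cdot)$. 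Then $\tilde \iota = \id_{U_{\Com}} \otimes \bigoplus_l G^n(l) \otimes \iota^{\otimes l}$ sends each $\mu_{\underline{f}_i}$ to $\tau_{\underline{f}_i} \in E_{\kappa_i}$, preserving the $\mcK$-decomposition. The anticipated main obstacle is precisely this combinatorial bookkeeping: one must ensure that the $\kappa_i$ arising from the presentation can indeed be taken with canonical orientation so that $\iota$ lands in the correct $E$-cells. Once this is settled, composing the two inclusions yields the lemma.
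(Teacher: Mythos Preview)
Your proposal is correct and follows essentially the same route as the paper's proof: both split $\lambda_0 = \tilde\iota\circ(\theta + \eta_U\otimes\gamma)$, handle $\theta$ via Lemma~\ref{prop:LambdaRespectsComKappa} and $\gamma$ via \cite[4.6]{Fre11}, and then argue that $\tilde\iota$ preserves the $\mcK$-cells because the restrictions $\kappa\vert_{\underline{e}''}$ retain the canonical orientation $\tau_{\underline{e}''}$. The one place where you should be more explicit is your claim that the $\kappa_i$ ``may be chosen of the form $(\tau_{\underline f_i},\cdot)$'': this is precisely the content of \cite[4.5]{Fre11}, which the paper cites at that step, so you should invoke it rather than leave it as bookkeeping to be settled.
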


\begin{proof}
We know that $\lambda_0 = \tilde \iota (\theta + \eta_U\otimes \gamma)$ and that
\begin{equation*}
\theta(T^n_{\kappa}) \subset \bigoplus_{e \in \underline e}  U_{\Com}(\lbrace e\rbrace ) \otimes (T^n \circ  {\Com})_{\kappa\vert_{ \underline{e} \setminus \lbrace e \rbrace}},
\end{equation*}
while according to \cite[4.6]{Fre11}
\begin{equation*}
(\eta_U \otimes \gamma)(G^n_{\kappa}) \subset U_{\Com}(\emptyset) \otimes (T^n \circ {\Com})_{\kappa}.
\end{equation*}
But by \cite[4.5]{Fre11} $(T^n \circ {\Com})_{\kappa}$
is spanned by elements $t(c_1,\dots,c_l)$ with $t \in T^n_{\kappa'}$, $c_i \in \Com_{\kappa_i}$
such that $\kappa_i$ is also of the form $(\tau_{\underline{e}^{(i)}}, \mu^{(i)})$ for some  $\underline{e}^{(i)} \subset \mathbb{N}_0$. Hence we find that
\begin{equation*}
(T^n \circ \iota) ((T^n  \circ {\Com})_{\kappa}) \subset (T^n  \circ  {E})_{\kappa}.
\end{equation*}
Observe that $\kappa\vert_{\underline e\setminus \lbrace e \rbrace} = (\tau_{\underline e\setminus \lbrace e \rbrace}, \mu')$.
Hence
\begin{equation*}
(U_{\Com} \otimes (T^n \circ \iota))( \bigoplus_{e \in \underline e} U_{\Com}(\lbrace e\rbrace ) \otimes (T^n  \circ {\Com})_{\kappa\vert_{\underline e\setminus \lbrace e \rbrace}})
\subset  \bigoplus_{e\in \underline e}  U_{\Com}(\lbrace e\rbrace ) \otimes (T^n \circ  {E})_{\kappa\vert _{\underline e\setminus \lbrace e \rbrace}}
\end{equation*}
as well. This proves the claim.
\end{proof}

\begin{lemma}\label{lemma:respectfulPartialLambda}
The twist $\partial_{\lambda}$ satisfies
\begin{equation*}
\partial_{\lambda}(U_{\Com} \otimes(T^n\circ E)_{\kappa}) \subset \bigoplus_{ \underline{e}' \subset \underline{e}}  U_{\Com} \otimes (T^n  \circ E)_{\kappa\vert_{\underline{e}'}}
\end{equation*}
for all $\kappa \in \mcK(\underline e)$ with $\underline e \subset \mathbb{N}_0$.
\end{lemma}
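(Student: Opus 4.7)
The plan is to argue inductively on the series decomposition $\lambda = \sum_{m\geq 0}\lambda_m$ produced in the proof of Proposition \ref{prop:lifting}, mirroring the strategy used for the trivial-coefficient version in \cite[5.3]{Fre11}. I would first reduce the statement to checking that each component satisfies
\begin{equation*}
\lambda_m(G^n_{\kappa}) \subset \bigoplus_{\underline{e}' \subset \underline{e}} U_{\Com}(\underline{e}\setminus\underline{e}') \otimes (T^n \circ E)_{\kappa\vert_{\underline{e}'}}
\end{equation*}
for every $\kappa=(\tau_{\underline e},\mu)\in\mcK(\underline e)$ with $\underline e\subset\mathbb{N}_0$. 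Once this is established on the generating $\Sigma_*$-free piece $G^n$, the induced right $E$-module morphism $\partial_{\lambda}$ automatically respects the claimed filtration on $U_{\Com}\otimes(T^n\circ E)$: by definition, $(T^n\circ E)_{\kappa}$ is spanned by formal composites $t(p_1,\dots,p_l)$ with $t\in T^n_{\kappa'}$ and $p_i\in E_{\kappa_i}$ satisfying $\kappa'(\kappa_1,\dots,\kappa_l)\leq\kappa$, so substituting such $p_i$'s into an element of $(T^n\circ E)_{\kappa'\vert_{\underline{e}'}}$ lands in $(T^n\circ E)_{\kappa\vert_{\underline{e}''}}$ for some $\underline{e}''\subset\underline{e}$ by compatibility of $\mcK$-composition with the restriction $\kappa\mapsto\kappa\vert_{-}$; the tensor factor $U_{\Com}$ plays no role in the bookkeeping since its multiplication is purely formal.

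For the base case $m=0$ the desired inclusion is precisely Lemma \ref{lemma:Lambda0}. For the inductive step I would apply the recursion $\lambda_m = \sum_{a+b=m-1} \tilde\nu\,\partial_{\lambda_a}\,\lambda_b$ to an element $g\in G^n_{\kappa}$ and analyse each factor in turn. Applying $\lambda_b$ first lands, by the induction hypothesis on $b$, in a direct sum of summands $U_{\Com}(\underline{e}\setminus\underline{e}')\otimes(T^n\circ E)_{\kappa\vert_{\underline{e}'}}$ indexed by subsets $\underline{e}'\subset\underline{e}$. Next, $\partial_{\lambda_a}$ acts by further applying $\lambda_a$ to the $T^n$-part of each such summand and then re-composing via the $E$-action; the induction hypothesis on $a$, combined with the $\mcK$-compatibility of operadic substitution just invoked, keeps the output inside the appropriate direct sum. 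Finally $\tilde\nu=\id_U\otimes\bigoplus_l G(l)\otimes\hat\nu^{(l)}$ preserves the filtration termwise, since by the example following Proposition \ref{prop:mcEComHomotopyRetract} both $\nu$ and $\iota\psi$ send $E_{\kappa}$ into $E_{\kappa}$, so each summand $(\iota\psi)^{\otimes i-1}\otimes\nu\otimes\id^{\otimes l-i}$ of $\hat\nu^{(l)}$ preserves the cellwise decomposition of $E^{\otimes l}$.

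The main obstacle is the middle step, where one must certify that the operadic substitution performed by $\partial_{\lambda_a}$ on top of the output of $\lambda_b$ produces an element whose $\mcK$-type is still of the restricted form $\kappa\vert_{\underline{e}''}$ for some $\underline{e}''\subset\underline{e}$, rather than merely being bounded by some larger graph on the full vertex set. This reduces to the same compatibility between restriction of complete graphs and partial composition in $\mcK$ that is used implicitly in the trivial-coefficient argument of \cite[5.3]{Fre11}; because the coefficient-bearing piece of our twist has been entirely absorbed into the $U_{\Com}$-tensor factor, whose multiplication is insensitive to the underlying indexing set, the combinatorial verification required here is a direct adaptation of the one in \cite{Fre11}, completing the induction and hence the lemma.
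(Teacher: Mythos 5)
Your proposal is correct and follows essentially the same route as the paper: induction over the components $\lambda_m$ of the recursive lifting, with Lemma \ref{lemma:Lambda0} as the base case, the compatibility $\kappa'\vert_{\underline{e}'}(\kappa_{i_1},\dots,\kappa_{i_j})\leq\kappa\vert_{\underline{f}_{i_1}\sqcup\dots\sqcup\underline{f}_{i_j}}$ of restriction with $\mcK$-composition to handle the substitution step, and the facts that $\nu$ and $\iota\psi$ respect the $\mcK$-cells (after normalizing to generators in $G^n$ with the canonical ordering) to handle $\tilde\nu$. The deferral of the combinatorial verification to the trivial-coefficient argument of \cite{Fre11} matches the level of detail the paper itself gives.
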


\begin{proof}
We show by induction that
\begin{equation*}
\partial_{\lambda_m} (U_{\Com} \otimes (T^n \circ E)_{\kappa}) \subset \bigoplus_{\underline{e}' \subset \underline{e}}  U_{\Com}\otimes (T^n  \circ E)_{\kappa\vert_{\underline{e}'}}
\end{equation*}
for all $m$.
Let $t(e_1,\dots,e_l) \in (T^n \circ E)_{\kappa}$. Then there are $\kappa' \in \mcK(\underline f)$ and $\kappa_i \in \mcK(\underline f_i)$ such that $t\in T^n_{\kappa'}$ and $e_i \in E_{\kappa_i}$ with
$\kappa'(\kappa_1,\dots,\kappa_l) \leq \kappa$.

Note that $(T^n \circ E) (\underline e)= \bigoplus_{i\geq 1} T^n(i) \otimes_{\Sigma_i} (E^{\otimes i}(\underline e)),$
hence we can assume that $\kappa'=(\id_{\underline l}, \mu')$ and $t \in G^n_{\kappa'}$. Writing down the definition of $\partial_{\lambda_0}$ and using Lemma \ref{lemma:Lambda0} yields that
$\partial_{\lambda_0}$ maps $U_{\Com} \otimes (T^n \circ E)_{\kappa}$ to  $U_{\Com}\otimes \bigoplus_{\lbrace i_1<\dots<i_j \rbrace = \underline e' \subset \underline l} (T^n \circ E)_{\kappa' \vert_{\underline{e'}}(\kappa_{i_1},\dots,\kappa_{i_j})}$. Since
\begin{equation*}
\kappa' \vert_{\underline{e'}}(\kappa_{i_1},\dots,\kappa_{i_j}) \leq \kappa \vert_{\underline f_{i_1} \sqcup\dots\sqcup \underline f_{i_j}}
\end{equation*}
the claim holds for $m=0$.
For $m>0$ recall that
\begin{equation*}
\partial_{\lambda_m}= \sum_{a+b=m-1} \tilde \nu \partial_{\lambda_a} \partial_{\lambda_b}.
\end{equation*}
The induction hypothesis yields that
\begin{equation*}
\partial_{\lambda_a} \partial_{\lambda_b}(U_{\Com} \otimes (T^n \circ E)_{\kappa}) \subset \bigoplus_{\underline{e}' \subset \underline{e}}  U_{\Com} \otimes (T^n \circ  E)_{\kappa_{\vert \underline{e}'}}.
\end{equation*}
Since
\begin{equation*}
\tilde \nu (u \otimes t(e_1,\dots,e_l)) = \sum_i \pm u \otimes t(\iota \psi(e_1),\dots, \iota\psi(e_{i-1}), \nu (e_i), e_{i+1},\dots,e_l)
\end{equation*}
for $\xi \in T^n$ and $y_r \in E$ the same reasoning as in Lemma \ref{lemma:Lambda0} together with our assumptions on the interaction between $\psi,  \iota, \nu$ and the $\mcK$-structure yields the claim.
\end{proof}

\begin{proposition}\label{prop:LambdaRestrictsToEn}
We have
\begin{equation*}
\partial_{\lambda}(U_{\Com} \otimes  (T^n \circ {E_n}) ) \subset U_{\Com} \otimes (T^n \circ {E_n}).
\end{equation*}
\end{proposition}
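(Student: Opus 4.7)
The plan is to reduce the statement to Lemma~\ref{lemma:respectfulPartialLambda} by exhibiting $U_{\Com}\otimes(T^n\circ E_n)$ as a union of the $\mcK_n$-indexed pieces of the $\mcK$-structure on $U_{\Com}\otimes(T^n\circ E)$. Throughout, we can work with a finite set $\underline e\subset\mathbb{N}_0$ by using the equivariance of all constructions under bijections of finite sets.

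First, I claim that
\begin{equation*}
U_{\Com}\otimes(T^n\circ E_n)(\underline e) \;=\; \bigcup_{\kappa\in\mcK_n(\underline e)} U_{\Com}\otimes(T^n\circ E)_{\kappa}.
\end{equation*}
The inclusion $\supset$ is immediate from the Remark after the definition of $\mcK_n$, which identifies $E_n$ with $\colim_{\kappa\in\mcK_n}E_\kappa$: if $t(e_1,\dots,e_l)\in (T^n\circ E)_\kappa$ with $\kappa\in\mcK_n$, then by definition of the $\mcK$-structure on $T^n\circ E$ there exist $\kappa'$ and $\kappa_i$ with $t\in T^n_{\kappa'}$, $e_i\in E_{\kappa_i}$, and $\kappa'(\kappa_1,\dots,\kappa_l)\leq\kappa$. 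Since partial composition inserts each $\kappa_i$ as a sub-complete-graph of $\kappa'(\kappa_1,\dots,\kappa_l)$, and the order $\leq$ is componentwise on weights, every weight occurring in each $\kappa_i$ is bounded by the corresponding weight of $\kappa$, hence by $n-1$. Thus $e_i\in E_n$. For the converse inclusion, given $t(e_1,\dots,e_l)\in T^n\circ E_n$ one picks $\kappa_i\in\mcK_n$ with $e_i\in E_{\kappa_i}$ and chooses the minimal $\kappa'$ with $t\in T^n_{\kappa'}$: the $\mcK$-structure on $T^n$ (conditions (1)--(3)) forces all weights of this minimal $\kappa'$ to be at most $n-1$, so $\kappa'\in\mcK_n$, and then $\kappa=\kappa'(\kappa_1,\dots,\kappa_l)$ lies in $\mcK_n$ as well since partial composition preserves the weight bound.

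Second, for any $\kappa=(\sigma,\mu)\in\mcK_n(\underline e)$ and any subset $\underline e'\subset\underline e$, the restriction $\kappa\vert_{\underline e'}$ has weights bounded by those of $\kappa$, and so still lies in $\mcK_n$. Applying Lemma~\ref{lemma:respectfulPartialLambda} yields
\begin{equation*}
\partial_\lambda\bigl(U_{\Com}\otimes(T^n\circ E)_\kappa\bigr) \;\subset\; \bigoplus_{\underline e'\subset\underline e} U_{\Com}\otimes(T^n\circ E)_{\kappa\vert_{\underline e'}} \;\subset\; U_{\Com}\otimes(T^n\circ E_n),
\end{equation*}
the last inclusion being provided by the first step. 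Taking the union over $\kappa\in\mcK_n(\underline e)$ establishes the desired invariance on $\underline e$; summing over finite sets (or equivalently invoking equivariance to move from $\underline e\subset\mathbb{N}_0$ to arbitrary finite sets) gives the conclusion.

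The only non-routine point is the identification in the first paragraph between $T^n\circ E_n$ and the $\mcK_n$-indexed pieces of $T^n\circ E$; this is where the cell structure on $E$ indexed by complete graphs does its work, and it relies crucially on the fact (verified inductively using conditions (1)--(3) of the $\mcK$-diagram on $T^n$) that a fully grown $n$-level tree admits a smallest complete graph in $\mcK_n$, never needing weights $\geq n$. Once this is in hand, Lemma~\ref{lemma:respectfulPartialLambda} combined with the obvious observation $\kappa\vert_{\underline e'}\in\mcK_n$ whenever $\kappa\in\mcK_n$ finishes the proof without further calculation.
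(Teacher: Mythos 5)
Your proof is correct and takes essentially the same approach as the paper's: both arguments reduce the statement to Lemma~\ref{lemma:respectfulPartialLambda} by covering $T^n\circ E_n$ with the pieces $(T^n\circ E)_{\kappa}$ for $\kappa\in\mcK_n$ (using that every tree in $T^n$ admits a minimal complete graph of weights at most $n-1$, and that $\kappa(\kappa_1,\dots,\kappa_l)\in\mcK_n$ forces $\kappa_i\in\mcK_n$), and then observe that the restrictions $\kappa\vert_{\underline{e}'}$ remain in $\mcK_n$. The only difference is cosmetic: you spell out both inclusions of the identification of $T^n\circ E_n$ with the $\mcK_n$-indexed part of $T^n\circ E$, whereas the paper phrases the same facts via generators and colimits.
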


\begin{proof}
We need to show that $\partial_{\lambda}(T^n) \subset U_{\Com} \otimes (T^n \circ E_n)$. Observe that for every $r \geq 0$ and $\xi \in T^n(r)$ there is a complete graph $\kappa=(\sigma,\mu)$ with $\xi \in T^n_{\kappa}$ such that $\mu_{ef}\leq n-1$ for all vertices $e,f$. Hence
\begin{equation*}
\partial_{\lambda}(T^n(r)) = \partial_{\lambda} (\colim_{\kappa \in \mcK_n(\underline{r})} T^n_{\kappa}) =  \colim_{\kappa \in \mcK_n(\underline r)}( \bigoplus_{\underline{e}' \subset \underline{r}}  U_{\Com} \otimes (T^n \circ  E)_{\kappa\vert_{\underline{e}'}}).
\end{equation*}
But
\begin{align*}
& \colim_{\kappa \in \mcK_n(\underline{r})}( \bigoplus_{\underline{e}' \subset \underline{r}}  U_{\Com}\otimes (T^n \circ  E)_{\kappa\vert_{\underline{e}'}})\\
= &  U_{\Com}  \otimes  \bigoplus_{\underline{e}' \subset \underline{r}}  \colim_{\kappa \in \mcK_n(\underline{r})} (T^n  \circ E)_{\kappa\vert_{\underline{e}'}}\\
\subset & U_{\Com} \otimes  \bigoplus_{\underline{e}' \subset \underline{r}}( T^n \circ E_n)(\underline{e}'),
\end{align*}
since $\kappa( \kappa_1,\dots,\kappa_l) \in \mcK_n$ implies $\kappa_1,\dots,\kappa_l \in \mcK_n$.
\end{proof}

%%%%%%%%%%%%%%%%%%%%%%
%%%%%%%%%%%%%%%%%%%%%%
%%%%%%%%%%%%%%%%%%%%%%

\section{From modules over operads to Quillen homology}\label{section:ModuleToHomology}

\subsection*{The model category of left modules over an algebra in right modules over an operad}

Let $\mcP$ be an operad and $U$ an algebra in right $\mcP$-modules. We will define a model structure on $\UModP$ by applying the standard method
to transport cofibrantly generated model categories along adjunctions to the adjunction
\begin{equation*}
\xymatrix{F= U \otimes - \colon \MP \ar@<1ex>[r] & \ar@<1ex>[l]  \UModP \colon V}
\end{equation*}
with $V$ the corresponding forgetful functor. This model structure will allow us to compare $U_{\Com} \otimes \Sigma^{-n}B^n_{E_n}$ with a standard resolution for computing $E_n$-homology.

We start by determining the  $FI$- and $FJ$-cell complexes, where $FI$ (respectively $FJ$) is the set of maps
\begin{equation*}
U \otimes (i \otimes (F_r \circ \mcP)) \colon U \otimes (C \otimes (F_r \circ \mcP)) \ra U \otimes (D \otimes (F_r \circ \mcP))
\end{equation*}
with $i \colon C \ra D$ a generating cofibration (respectively a generating acyclic cofibration) in $\dgmod$ and
\begin{equation*}
F_r(l) = \begin{cases} k[\Sigma_r], & l=r, \\
0, & l \neq r.
\end{cases}
\end{equation*}
Recall that the generating cofibrations in $\dgmod$ are of the form $S^{d-1} \ra D^d$ and the acyclic cofibrations are of the form $0 \ra D^d$, with
$S^{d-1}_{d-1}=k$, $S^{d-1}_l =0$ otherwise and $D^d$ the acyclic chain complex with $D^d_{d-1}=D^d_d=k$ and $D^d_l$ otherwise.
Also note that the underlying differential graded module of the direct sum of left $U$-modules in right $\mcP$-modules is the direct sum of their underlying differential graded modules. Using this yields the following observations.

\begin{proposition}
An $FI$-cell attachment in $\UModP$ is an inclusion $K \ra (K \oplus G, \partial)$ with $G= U \otimes (M \circ \mcP)$ for a free $\Sigma_*$-module $M$ with trivial differential and $\partial \colon G \ra K$.
An $FJ$-cell attachment in $\UModP$ is an inclusion $K \ra K \oplus G'$ with  $G'= U \otimes (\bigoplus_{\alpha} D^{n_{\alpha}} \otimes (F_r \circ \mcP))$.
\end{proposition}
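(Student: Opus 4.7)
The plan is to compute the pushouts defining $FI$- and $FJ$-cell attachments explicitly. Since the free functor $U \otimes (- \circ \mcP)$ from $\Sigma_*$-modules to $\UModP$ is a composition of left adjoints (by Proposition~\ref{prop:FreeUModP}), and since the forgetful functor $\UModP \ra \MP$, as well as the forgetful functor from right $\mcP$-modules to $\Sigma_*$-modules, creates colimits, all pushouts in $\UModP$ can be computed on the underlying graded $k$-modules, after which the $U$- and $\mcP$-structures are induced.

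For the $FI$-case I would analyze a single pushout
\begin{equation*}
\xymatrix{
\bigoplus_\alpha U \otimes (S^{d_\alpha - 1} \otimes (F_{r_\alpha} \circ \mcP)) \ar[r]^-{f} \ar[d] & K \ar[d] \\
\bigoplus_\alpha U \otimes (D^{d_\alpha} \otimes (F_{r_\alpha} \circ \mcP)) \ar[r] & L
}
\end{equation*}
where $f$ is the attaching map and the left vertical map is the coproduct of generating cofibrations. Since $D^{d_\alpha}/S^{d_\alpha - 1} \cong \Sigma^{d_\alpha}k$, this vertical map is, on the underlying graded level, a split monomorphism with cokernel $\bigoplus_\alpha U \otimes (\Sigma^{d_\alpha} k \otimes (F_{r_\alpha} \circ \mcP))$. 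Setting $M = \bigoplus_\alpha \Sigma^{d_\alpha} k \otimes F_{r_\alpha}$, which is a free $\Sigma_*$-module with trivial internal differential, I identify the cokernel with $G = U \otimes (M \circ \mcP)$ and obtain $L \cong K \oplus G$ as underlying graded objects of $\UModP$. To read off the differential I lift a generator $y_\alpha$ of $\Sigma^{d_\alpha} k$ to the top-degree generator of $D^{d_\alpha}$; its differential in $D^{d_\alpha}$ is the unique generator of $S^{d_\alpha - 1}$, which is identified in the pushout with its image in $K$ under $f$. This produces exactly the twisting $\partial \colon G \ra K$ stated in the proposition.

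The $FJ$-case is easier because each generating acyclic cofibration has source $0$, so every attaching map is trivial; the pushout reduces to the categorical coproduct $L = K \oplus G'$ with $G' = U \otimes (\bigoplus_\alpha D^{n_\alpha} \otimes (F_{r_\alpha} \circ \mcP))$, equipped with the direct-sum differential and no twist into $K$. The only nontrivial technical ingredient in the whole argument is the assertion that colimits in $\UModP$ are created by the forgetful functor to the underlying graded category; once this is in hand, everything reduces to a standard analysis of pushouts along split monomorphisms in additive categories, and I do not expect any serious obstacle.
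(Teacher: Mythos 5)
Your proposal is correct and is exactly the argument the paper intends: the paper states this proposition without proof, treating it as an observation that follows from the form of the generating (acyclic) cofibrations and the fact that colimits in $\UModP$ are created by the forgetful functor, so the pushout is computed on underlying objects where $S^{d-1}\ra D^d$ is a split monomorphism with cokernel $\Sigma^{d}k$. Your explicit identification of the twist $\partial\colon G\ra K$ via the attaching map, and the observation that the $FJ$-case degenerates to a coproduct because the sources are zero, fill in precisely the details the paper leaves implicit.
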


\begin{corollary}\label{cor:RelativeCellComplexes}
A relative $FI$-cell complex in $\UModP$ is an inclusion
\begin{equation*}
K \ra (K \oplus (U \otimes (M\circ \mcP)), \partial)
\end{equation*}
with $M$ a $\Sigma_*$-free $\Sigma_*$-module with trivial differential, such that $ K \oplus (U \otimes (M\circ \mcP)) $ is filtered by  $G_{\lambda}, \lambda < \kappa$ for a given ordinal $\kappa$, with $\partial( G_{\lambda}) \subset G_{\lambda-1}$ and $G_0 = K$.
A relative $FJ$-cell complex in $\UModP$ is the same as an $FJ$-cell attachment.
\end{corollary}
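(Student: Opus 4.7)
The plan is to deduce the corollary directly from the preceding proposition by unfolding the definition of a relative cell complex as a transfinite composition of pushouts along maps in $FI$ (respectively $FJ$), and then performing a transfinite induction on the indexing ordinal $\kappa$.

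The $FJ$ case is essentially immediate. Every generating acyclic cofibration in $\dgmod$ has source $0$, so its image under $F = U \otimes -$ is likewise a map out of $0$; a pushout along such a map reduces to a direct sum $K \mapsto K \oplus U \otimes (D^d \otimes (F_r \circ \mcP))$, with no nontrivial attaching data. A transfinite composition of such direct sums collapses to the single direct sum of the form stated in the proposition, which is again an $FJ$-cell attachment. No real induction is needed here beyond the observation that direct sums commute with the colimits defining the cell complex.

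For the $FI$ case I would proceed by transfinite induction along $\kappa$, writing $G_\lambda$ for the stage at ordinal $\lambda$. The base $G_0 = K$ is given. At a successor stage $\lambda = \mu + 1$, the proposition describes $G_\lambda$ as $G_\mu \oplus U \otimes (M_\lambda \circ \mcP)$, where $M_\lambda$ is $\Sigma_*$-free with trivial internal differential (it arises as $D^d/S^{d-1}$ tensored with some $F_r$), and the newly attached differential sends the added generators into $G_\mu$ via the attaching map, hence in particular $\partial(G_\lambda) \subset G_{\mu} = G_{\lambda - 1}$. At a limit ordinal $\lambda$ I take $G_\lambda = \colim_{\mu < \lambda} G_\mu$; both the forgetful functor to $\dgmod$ and the free-module functor $U \otimes (- \circ \mcP)$ preserve this colimit, so the filtration and boundary conditions pass to the limit. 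Setting $M = \bigoplus_{\lambda < \kappa} M_\lambda$, still $\Sigma_*$-free with trivial internal differential, and assembling the stagewise boundaries into a global $\partial$ produces the asserted description.

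The main point to verify with care is that the free-module structure $U \otimes (- \circ \mcP)$ and the direct-sum decomposition are jointly preserved by the transfinite colimits in play, so that each $G_\lambda$ genuinely has the advertised shape rather than being merely a subquotient. This ultimately rests on the fact that both $U \otimes -$ and the plethysm $- \circ \mcP$ are left adjoints in each variable, hence commute with the relevant colimits, and that each successor inclusion $G_\mu \hookrightarrow G_{\mu+1}$ is a split monomorphism of underlying graded modules, so that the splittings propagate coherently through limit stages.
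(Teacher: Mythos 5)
Your argument is correct and matches the (implicit) proof in the paper: the corollary is just the transfinite unfolding of the preceding proposition, with successor stages given by $FI$-cell attachments contributing free summands $U\otimes(M_\lambda\circ\mcP)$ whose twisting differential lands in the previous stage, and limit stages given by colimits preserved by $U\otimes(-\circ\mcP)$. Note only that the characterization is also used later in the converse direction (a filtered quasifree object of this shape \emph{is} a relative $FI$-cell complex, hence cofibrant), which follows by reversing your successor-stage analysis: each summand $k[d]\otimes F_r$ of $M_\lambda$ with its differential into $G_{\lambda-1}$ is exactly the cokernel of a pushout along $U\otimes(S^{d-1}\otimes(F_r\circ\mcP))\ra U\otimes(D^{d}\otimes(F_r\circ\mcP))$.
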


Now we are in the position to prove that the adjunction between $\UModP$ and the category of right $\mcP$-modules gives rise to a model structure on $\UModP$.

\begin{thm}
Let $\mcP$ be cofibrant in $\dgmod$. Let $\UModP$ be the category of left $U$-modules in right $\mcP$-modules.
Then $\UModP$ is a cofibrantly generated model category with weak equivalences and fibrations created  by $V \colon \UModP \ra \MP$. The generating (acyclic) cofibrations $FI$ and $FJ$ are of the form
\begin{equation*}
U \otimes (C \otimes (F_r\circ \mcP)) \ra U \otimes (D \otimes (F_r\circ \mcP))
\end{equation*}
with $C \ra D$ a generating (acyclic) cofibration in $\dgmod$.
\end{thm}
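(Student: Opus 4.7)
The plan is to apply the standard transfer theorem for cofibrantly generated model structures to the adjunction $F = U \otimes - \dashv V$ between $\MP$ and $\UModP$, where $\MP$ carries its aritywise model structure lifted from $\dgmod$. Three conditions will need to be verified: that $\UModP$ is bicomplete, that the domains of the maps in $FI$ and $FJ$ are small relative to the corresponding cell complexes, and that the forgetful functor $V$ takes every relative $FJ$-cell complex to a weak equivalence in $\MP$.

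The first two conditions I expect to be essentially formal. Limits in $\UModP$ are created by $V$, while colimits exist because $\UModP$ is the category of algebras over the monad $U \otimes -$ on the bicomplete category $\MP$, and this monad preserves colimits since it is given by tensoring with a fixed object on the left. Smallness will transfer along $F$ from the corresponding smallness in $\dgmod$, as the underlying differential graded module at each arity of a generator is small.

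The crux is the third condition. By Corollary~\ref{cor:RelativeCellComplexes}, every relative $FJ$-cell complex is an inclusion of the form
\begin{equation*}
K \longrightarrow K \oplus U \otimes \Bigl( \bigoplus_{\alpha} D^{n_{\alpha}} \otimes (F_r \circ \mcP) \Bigr).
\end{equation*}
Each complex $D^{n_{\alpha}}$ admits an explicit $k$-linear contracting chain homotopy; tensoring that homotopy over $k$ with the factor $U \otimes (F_r \circ \mcP)$ produces an aritywise contracting homotopy on the right-hand summand. This summand is therefore contractible, hence the displayed inclusion is an aritywise quasi-isomorphism, which is precisely a weak equivalence in $\MP$.

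The main obstacle is the acyclicity condition just outlined. What makes it tractable here is that the generating acyclic cofibrations in $\dgmod$ come from the universally contractible complexes $D^{n_{\alpha}}$, so no flatness or cofibrancy hypothesis on $U$ itself is needed; the cofibrancy assumption on $\mcP$ enters only to ensure the existence of the transferred model structure on $\MP$ in the first place.
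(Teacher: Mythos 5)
Your proposal is correct and follows the same overall strategy as the paper: transfer the cofibrantly generated model structure along the adjunction $U \otimes - \dashv V$, checking bicompleteness, smallness of the domains of $FI$ and $FJ$, and acyclicity of relative $FJ$-cell complexes. The one genuine divergence is in the acyclicity step. You invoke Corollary~\ref{cor:RelativeCellComplexes} to write a relative $FJ$-cell complex as a direct-sum inclusion $K \ra K \oplus U \otimes (\bigoplus_{\alpha} D^{n_{\alpha}} \otimes (F_r \circ \mcP))$ and contract the complement aritywise by tensoring the standard contracting homotopy of $D^{n_{\alpha}}$ with the identity of the remaining factors; the paper instead observes that such a map is obtained by applying $U \otimes -$ to a relative $J$-cell complex in $\mathcal{M}_{\mcP}$, which is an acyclic cofibration, and then quotes \cite[Lemma 5.6]{BMR} to conclude that $U \otimes -$ carries acyclic cofibrations to weak equivalences. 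Your version is more elementary and self-contained, and it makes transparent why no flatness hypothesis on $U$ is needed (the $k$-linear contracting homotopy of $D^{n_{\alpha}}$ survives any tensoring over $k$); the paper's version is shorter at the price of an external reference. Your treatment of smallness is stated more loosely than the paper's --- the paper notes that the domains of $FJ$ are zero and that a map out of a domain $U \otimes (D^l \otimes (F_r \circ \mcP))$ of $FI$ amounts to choosing a single element of degree $l$ in arity $r$, which must land in some stage of the cell filtration --- but this is the same formal point and is easily filled in.
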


\begin{proof}
The category $\UModP$ is complete and cocomplete with limits and colimits created by $\UModP\ra \MP$. Let $f$ be a relative $FJ$-cell complex. Since $V$ creates colimits, we have that $f = F(g)$ for a relative $J$-cell complex $g$, which is an acyclic cofibration by \cite[11.1.8]{Fre09}. But the functor $F$ sends acyclic cofibrations to weak equivalences by \cite[Lemma 5.6]{BMR}.
The small object argument holds trivially for $FJ$ since the domains of $FJ$ are all $0$. The domains of $FI$ are of the form $U \otimes (D^l \otimes (F_r\circ \mcP))$, hence a morphism to $K \in \UModP$ is equivalent to picking an element $x \in K(r)$ of degree $l$. If $K = \colim_{\lambda < \kappa} L_{\lambda}$ is a relative $FI$-cell-complex, it is clear that $x \in L_{\lambda}$ for some $\lambda < \kappa$. But since $L_{\lambda} \in \UModP$ we see that  $U \otimes (D^l \otimes (F_r\circ \mcP))$ as a whole gets mapped to $L_{\lambda}$.
Hence by theorem \cite[11.3.2]{Hi03}
the category $\UModP$ is a model category with the properties stated above.
\end{proof}

By \cite[11.1.8]{Fre09} Corollary \ref{cor:RelativeCellComplexes} implies:

\begin{corollary}\label{cor:CofUModP}
Let $(U \otimes (M\circ \mcP), \partial)$ be a quasifree object in $\UModP$ such that $M$ is $\Sigma_*$-free and such that there is an ordinal $\kappa$ and a filtration $(G_{\lambda})_{\lambda < \kappa}$ of $U \otimes (M\circ \mcP)$ with $\partial (G_{\lambda}) \subset G_{\lambda-1}$. Then $(U \otimes (M\circ \mcP), \partial)$ is cofibrant. In particular such quasifree objects in $\UModP$ which are bounded below as chain complexes are cofibrant.
\end{corollary}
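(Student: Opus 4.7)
The plan is to deduce both parts of Corollary \ref{cor:CofUModP} directly from Corollary \ref{cor:RelativeCellComplexes}. For the first assertion, I would apply that characterization with source $K = 0$, the initial object of $\UModP$. The given hypotheses---that $M$ is $\Sigma_*$-free and that there is a filtration $(G_\lambda)$ with $\partial(G_\lambda) \subset G_{\lambda-1}$---are exactly what exhibits $(U \otimes (M \circ \mcP), \partial)$ as a relative $FI$-cell complex from $0$: after adjoining $G_{-1} = 0$ if needed, each successive subquotient $G_\lambda / G_{\lambda-1}$ carries the zero induced differential (because $\partial(G_\lambda) \subset G_{\lambda-1}$), so it is a free left $U$-module in right $\mcP$-modules on a $\Sigma_*$-free module of generators with trivial differential, i.e.\ it is an $FI$-cell attachment in the sense of the proposition preceding Corollary \ref{cor:RelativeCellComplexes}. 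Since relative $FI$-cell complexes are cofibrations and $0$ is the initial object, $(U \otimes (M \circ \mcP), \partial)$ is cofibrant.

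For the second assertion, the plan is to build an explicit filtration from the bounded-below hypothesis. Assume the total chain complex vanishes below some degree $n_0$, and, using that $M$ is $\Sigma_*$-free, let $M^{\leq d}$ denote the $\Sigma_*$-submodule spanned by generators of internal degree at most $d$. Set $G_d = U \otimes (M^{\leq d} \circ \mcP)$ for $d \geq n_0$, with $G_{n_0 - 1} = 0$. The twist $\partial$ arises from a twisting cochain $\theta \colon M \to U \otimes (M \circ \mcP)$ by extension via the universal property of Proposition \ref{prop:FreeUModP}; since $U$ and $\mcP$ are concentrated in nonnegative degrees and $\theta$ lowers total degree by one, the $M$-component of $\theta(m)$ for a generator $m$ of degree $d$ has degree at most $d - 1$, and extension by the $U$-multiplication and right $\mcP$-action preserves this bound on $M$-degree. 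Hence $\partial(G_d) \subset G_{d-1}$, and the first part applies to yield cofibrancy.

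The only delicate step is the last one: verifying that the extension of $\theta$ by $U$-linearity and the right $\mcP$-action preserves the $M$-degree filtration up to the expected shift by one. This is a routine check using the explicit formula given in Proposition \ref{prop:FreeUModP}, and I do not anticipate any deeper obstacle. The corollary is essentially a reformulation of Corollary \ref{cor:RelativeCellComplexes} in the special case where the source is the initial object, together with a natural construction of the required filtration under the bounded-below hypothesis.
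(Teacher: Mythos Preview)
Your proposal is correct and follows the same approach as the paper, which gives only a one-line proof: ``By \cite[11.1.8]{Fre09} Corollary \ref{cor:RelativeCellComplexes} implies:''. Your argument simply unpacks this citation by making the $FI$-cell structure explicit and, for the second assertion, by writing down the degree filtration that the paper leaves implicit. One minor remark: in the second part you invoke that $U$ and $\mcP$ are concentrated in nonnegative degrees in order to conclude that the twist lowers the $M$-degree; this hypothesis is not stated in Corollary \ref{cor:CofUModP} itself, but it is satisfied in every application in the paper (where $U = U_{\Com}$ and $\mcP = E_n$ or $\Com$), so the argument goes through in the cases of interest.
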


Finally we examine how an operad morphism $\mcQ \ra \mcP$ allows us to compare left modules in right $\mcQ$-modules and in right $\mcP$-modules.

\begin{proposition}
Given a morphism $\mcQ \ra \mcP$ of operads, let $(V,\mu_V, \eta_V)$ be an algebra in right $\mcQ$-modules and $(N,\mu_N, \gamma_N) \in \VModQ$. Then $V \circ_{\mcQ} \mcP$ is an algebra in right $\mcP$-modules with multiplication
\begin{equation*}
\xymatrix{ (V\circ_{\mcQ} \mcP) \otimes (V\circ_{\mcQ} \mcP) \ar[r]^-{\cong} & (V \otimes V) \circ_{\mcQ} \mcP \ar[r]^-{\mu_V\circ_{\mcQ} \mcP} &V\circ_{\mcQ} \mcP }
\end{equation*}
and unit defined via the inclusion $V \ra V \circ_{\mcQ} \mcP$.
Furthermore, $N \circ_{\mcQ} \mcP$ is a left $V \circ_{\mcQ} \mcP$-module in right $\mcP$-modules with structure maps
\begin{equation*}
\xymatrix{
(V \circ_{\mcQ} \mcP) \otimes (N \circ_{\mcQ} \mcP) \ar[r]^-{\cong} & (V \otimes N) \circ_{\mcQ} \mcP \ar[r]^-{\mu_N\circ_{\mcQ} \mcP} & N \circ_{\mcQ} \mcP }
\end{equation*}
and
\begin{equation*}
\xymatrix{ (N \circ_{\mcQ} \mcP) \mcP \cong N \circ_{\mcQ} (\mcP \mcP) \ar[r]^-{N \circ_{\mcQ} \gamma_{\mcP}} & N \circ_{\mcQ} \mcP. }
\end{equation*}
\end{proposition}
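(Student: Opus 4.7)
The strategy is to view the construction $-\circ_{\mcQ}\mcP$ as an extension of scalars functor from the category of right $\mcQ$-modules to the category of right $\mcP$-modules, and to verify that this functor is symmetric monoidal; the proposition then becomes the formal statement that symmetric monoidal functors transport algebras and left modules.

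First I would establish the key distributivity isomorphism. For any pair of right $\mcQ$-modules $M_1, M_2$, the arity-wise tensor product of right $\mcP$-modules gives a natural isomorphism
\begin{equation*}
\chi_{M_1,M_2}\colon (M_1\otimes M_2)\circ_{\mcQ}\mcP \xrightarrow{\cong} (M_1\circ_{\mcQ}\mcP)\otimes (M_2\circ_{\mcQ}\mcP).
\end{equation*}
This follows because, before passing to the relative composition product, there is an analogous isomorphism $(M_1\otimes M_2)\circ \mcP \cong (M_1\circ \mcP)\otimes (M_2\circ \mcP)$ (tensor distributes over plethysm on the right), and both sides of $\chi$ are computed as the same reflexive coequalizer of the right $\mcQ$-action against the operad map $\mcQ\to\mcP$. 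Using $\chi$, I would then define the multiplication on $V\circ_{\mcQ}\mcP$ as the composite of $\chi^{-1}_{V,V}$ with $\mu_V\circ_{\mcQ}\mcP$, and the unit as the image in the coequalizer of $\eta_V\colon I\to V$ composed with the canonical map $V\to V\circ_{\mcQ}\mcP$ induced by the unit of $\mcP$. Similarly, the left action of $V\circ_{\mcQ}\mcP$ on $N\circ_{\mcQ}\mcP$ is defined as $(\mu_N\circ_{\mcQ}\mcP)\circ \chi^{-1}_{V,N}$.

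Associativity and unitality of the multiplication on $V\circ_{\mcQ}\mcP$ then reduce, via naturality and the coherence of the monoidal structure, to the analogous identities for $\mu_V$ inside the category of right $\mcQ$-modules. The same holds for the module axioms on $N\circ_{\mcQ}\mcP$. The right $\mcP$-module structure on $N\circ_{\mcQ}\mcP$ is the tautological one coming from the associativity isomorphism $(N\circ_{\mcQ}\mcP)\circ\mcP \cong N\circ_{\mcQ}(\mcP\circ\mcP)$ followed by $N\circ_{\mcQ}\gamma_{\mcP}$; the fact that the left $V\circ_{\mcQ}\mcP$-action is a morphism of right $\mcP$-modules follows because $\mu_N$ is already a map of right $\mcQ$-modules and $\chi$ is natural in the $\mcP$-variable.

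The only nontrivial point is checking that the distributivity isomorphism $\chi$ descends correctly to the relative composition product and that it is compatible with the associativity/unit coherences of the symmetric monoidal category of right $\mcP$-modules; the remaining verifications are routine diagram chases once this is in place. In particular there is nothing beyond coherence to prove: both the algebra structure on $V\circ_{\mcQ}\mcP$ and the module structure on $N\circ_{\mcQ}\mcP$ are obtained by applying a symmetric monoidal left adjoint to the corresponding structures on $V$ and $N$.
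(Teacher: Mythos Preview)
Your argument is correct: the extension of scalars functor $-\circ_{\mcQ}\mcP$ is symmetric monoidal for the arity-wise tensor product (via the distributivity isomorphism $\chi$ you describe), and hence carries algebras to algebras and left modules to left modules. The paper in fact gives no proof of this proposition at all; it is stated as a formal observation and left to the reader, so there is nothing to compare against beyond noting that your sketch supplies precisely the kind of routine verification the authors omit.
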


For categories of right modules a morphism of operads gives rise to a Quillen adjunction, see \cite[Theorem 16.B]{Fre09}. In our setting we have a similar result:

\begin{proposition}\label{prop:QuiEqUModP}
Let $V$ be an algebra in right $\mcQ$-modules. A morphism $\mcQ \ra \mcP$ of operads gives rise to an adjunction
\begin{equation*}
\xymatrix{ - \circ_{\mcQ} \mcP \colon \VModQ \ar@<1ex>[r] & {}_{V \circ_{\mcQ} \mcP} \mathrm{Mod}(\mathcal{M}_{\mcP}) \ar@<1ex>[l] \colon \Res, }
\end{equation*}
where for $M \in {}_{V \circ_{\mcQ} \mcP} \mathrm{Mod}(\mathcal{M}_{\mcP})$ the structure maps of $\Res(M)$ are defined by restricting the right $\mcP$-module structure to $\mcQ$ and via the map $V \ra V\circ_{\mcQ} \mcP$.
If $\mcP$ and $\mcQ$ are cofibrant as differential graded modules in each arity, this is a Quillen adjunction.
\end{proposition}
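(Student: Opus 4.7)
The plan is to prove the adjunction first and then to upgrade it to a Quillen adjunction under the cofibrancy hypothesis on $\mcP$ and $\mcQ$.

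For the adjunction, I would build on the relative composition--restriction adjunction for right modules,
\begin{equation*}
\Hom_{\mathcal{M}_{\mcP}}(N \circ_{\mcQ} \mcP, M) \cong \Hom_{\mathcal{M}_{\mcQ}}(N, \Res M),
\end{equation*}
which is already available as \cite[Theorem 16.B]{Fre09}. It then suffices to check that this bijection sends morphisms respecting the left $V \circ_{\mcQ} \mcP$-action on $N \circ_{\mcQ} \mcP$ precisely to morphisms respecting the left $V$-action on $\Res(M)$. This is a direct verification: the left $V \circ_{\mcQ} \mcP$-action on $N \circ_{\mcQ} \mcP$ was defined through $\mu_N \circ_{\mcQ} \mcP$, while the $V$-action on $\Res(M)$ is obtained by precomposing the $V \circ_{\mcQ} \mcP$-action on $M$ with the unit $V \to V \circ_{\mcQ} \mcP$. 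Under the hom-adjunction these two equivariance constraints correspond term by term, so the bijection above restricts to the desired one.

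For the Quillen pair claim, I would verify that $\Res$ preserves fibrations and weak equivalences. By the theorem established earlier in this section, fibrations and weak equivalences in $\UModP$ are created by the forgetful functor to $\MP$; in turn, the model structure on $\MP$ from \cite[ch.11]{Fre09} detects fibrations and weak equivalences arity-wise in $\dgmod$. The same chain of creations applies to $\VModQ$. Since $\Res$ is the identity on underlying arity-graded differential graded modules, it sends levelwise fibrations and quasi-isomorphisms to levelwise fibrations and quasi-isomorphisms, which is exactly what is needed for $\Res$ to be a right Quillen functor.

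The cofibrancy assumption on $\mcP$ and $\mcQ$ in each arity enters only to guarantee that the transferred model structure on $\UModP$ and $\VModQ$ as well as the underlying model structures on $\MP$ and $\mathcal{M}_{\mcQ}$ actually exist. The main obstacle I anticipate is not deep but organizational, namely the bookkeeping in the first step: identifying the $V$-action on $\Res(M)$ with the restriction of the $V \circ_{\mcQ} \mcP$-action along the unit $V \to V \circ_{\mcQ} \mcP$ and matching it with the $\mu_N \circ_{\mcQ} \mcP$ description of the action on $N \circ_{\mcQ} \mcP$. Once this identification is in place, both the adjunction and the Quillen pair assertion reduce to formal consequences of the right-module case.
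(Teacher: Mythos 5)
Your proposal is correct and follows essentially the same route the paper implicitly relies on: the proposition is stated without proof, immediately after the citation of \cite[Theorem 16.B]{Fre09} for the right-module case, so the intended argument is exactly your reduction to the hom-bijection for right modules plus the observation that $\Res$ preserves fibrations and weak equivalences because these are created arity-wise in $\dgmod$ through the forgetful functors. Your identification of where the arity-wise cofibrancy of $\mcP$ and $\mcQ$ enters (existence of the transferred model structures) also matches the hypotheses of the paper's earlier theorem on $\UModP$.
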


%%%%%%%%%%%%%%%
%%%%%%%%%%%%%%%

\subsection*{The twisted module with coefficients associated to the operadic bar construction}

We now use the operadic bar construction to obtain an object in $\UCModMEn$ which can be used to calculate $H_*^{E_n}(A;A_+)$. We then show that this object admits a trivial fibration to $\Omega^1_{\Com}$, which will allow us to compare it with $(U_{\Com} \otimes (\Sigma^{-n} T^n \circ {E_n}), \partial_{\lambda})$.

We first review the right $\mcP$-modules modeling the universal enveloping algebra and the module of K\"ahler differentials.

\begin{proposition}
(see \cite[10.2]{Fre09})\label{prop:UnivEnv}
Set $U_{\mcP}(i) = \mcP(i+1)$. This is a right $\mcP$-module with structure map given by
\begin{equation*}
\gamma_{U_{\mcP}}(p; p_1,\dots,p_i) = \gamma_{\mcP}(p; 1, p_1,\dots,p_i)
\end{equation*}
for $p \in U_{\mcP}(i), p_l \in \mcP$.
Furthermore, $U_{\mcP}$ is an associative algebra in right $\mcP$-modules with multiplication induced by the partial composition $\circ_1$.
We will write
\begin{equation*}
p(p_1,\dots,p_{i-1},x,p_i,\dots,p_{l-1})
\end{equation*}
for the element $\gamma_{U_{\mcP}}(p\cdot(1\dotsi); p_1,\dots,p_{l-1}) \in \mcP[1]$ with $(1\dotsi) \in \Sigma_i$ the cyclic permutation.
Note that for any $\mcP$-algebra $A$ we have that $U_{\mcP}(A) = U_{\mcP} \circ_{\mcP} A$ with the induced algebra structure.
\end{proposition}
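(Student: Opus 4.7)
The plan is to verify three structural claims independently: (i) that $U_{\mcP}$ endowed with the given $\gamma_{U_\mcP}$ really is a right $\mcP$-module, (ii) that the partial composition $\circ_1$ of $\mcP$ endows $U_\mcP$ with an associative, unital algebra structure in right $\mcP$-modules, and (iii) that upon change of base to a $\mcP$-algebra $A$ this algebra structure agrees with the standard description of the enveloping algebra $U_\mcP(A)$.

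First I would fix the convention that $U_\mcP(i) = \mcP(i+1)$ carries the restricted $\Sigma_i$-action that fixes the distinguished ``module-input'' slot (which I would place as the first input, matching the definition of $\gamma_{U_\mcP}$, which inserts the identity $1\in\mcP(1)$ precisely into that slot). Once this convention is fixed, checking that $\gamma_{U_\mcP}$ defines a right $\mcP$-action is a direct calculation: associativity reduces to the associativity of $\gamma_\mcP$ combined with the unit axiom $\gamma_\mcP(1;q)=q$ (so that inserting $1$ into the distinguished slot of $\gamma_\mcP(p;1,p_1,\dots,p_i)$ again yields $1$ in that slot), while equivariance is immediate since the insertion of $1$ is fixed by the action of $\Sigma_i$ on the remaining $i$ slots.

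Next I would define the product $\mu^U_{i,j}\colon U_\mcP(i)\otimes U_\mcP(j)\to U_\mcP(i+j)$ as $p\cdot q = p\circ_1 q$, where $\circ_1$ is the partial composition of $\mcP$ at the distinguished input. Associativity of $\mu^U$ is the operadic identity $(p\circ_1 q)\circ_1 r = p\circ_1(q\circ_1 r)$, and the unit of $U_\mcP$ is the operad unit $1\in\mcP(1) = U_\mcP(0)$. The only nonformal step is checking that $\mu^U$ is a morphism of right $\mcP$-modules, i.e.\ compatibility of $\circ_1$-composition with the action on the non-distinguished slots; this is an instance of the parallel-composition axiom of operads, which says that partial compositions at disjoint inputs commute. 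For claim (iii), one simply unravels $U_\mcP \circ_\mcP A = \bigoplus_i \mcP(i+1)\otimes_{\Sigma_i} A^{\otimes i}$ and observes that, under this identification, the induced algebra product sends $(p;a_1,\dots,a_i)\cdot(q;b_1,\dots,b_j)$ to $(p\circ_1 q;a_1,\dots,a_i,b_1,\dots,b_j)$, which is exactly the classical formula for the enveloping algebra.

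The main obstacle I anticipate is purely notational bookkeeping: the formulas involve a distinguished ``module slot'' that must be tracked through every $\Sigma_*$-action and every partial composition, and the cyclic-permutation convention $p(p_1,\dots,p_{i-1},x,p_i,\dots,p_{l-1}) = \gamma_{U_\mcP}(p\cdot(1\cdots i);p_1,\dots,p_{l-1})$ in the statement suggests that the authors reposition the module slot midway (so it appears between $p_{i-1}$ and $p_i$ rather than at the start). The substantive content reduces entirely to operad axioms once a single convention for the location of the module slot, and an equivariance rule for moving it, is fixed at the outset; I would therefore devote the opening of the proof to pinning down this convention and deriving a translation lemma between the ``slot-at-position-$0$'' and ``slot-at-position-$i$'' descriptions, after which all three claims follow by short diagrammatic calculations.
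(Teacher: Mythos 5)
Your proposal is correct. The paper offers no proof of this proposition at all --- it is recalled verbatim from \cite[10.2]{Fre09} --- and your verification (right-module axioms from those of $\gamma_{\mcP}$ plus unitality, associativity of the product from the sequential composition axiom for $\circ_1$, compatibility with the $\mcP$-action from the parallel composition axiom, and the identification $U_{\mcP}(A)=U_{\mcP}\circ_{\mcP}A$ by unravelling the relative composite) is exactly the standard argument given in that reference. The only caveat is the one you already flag as bookkeeping: in $p\circ_1 q$ the non-distinguished inputs of $q$ precede those of $p$ in the planar order, so your formula $(p\circ_1 q;a_1,\dots,a_i,b_1,\dots,b_j)$ holds only after applying the block permutation that restores this order, which is harmless since the tensor $A^{\otimes i+j}$ is taken over $\Sigma_{i+j}$ in $U_{\mcP}\circ_{\mcP}A$.
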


For the following proposition we assume that $\mcP(i)$ is a $k$-module to avoid additional signs.

\begin{proposition}\label{lem:KaehlerRightModule}(see \cite[10.3]{Fre09})
Let $\mcP(i)$ be concentrated in degree zero for each $i\geq 0$.
There is a right $\mcP$-module $\Omega^1_{\mcP}$ such that  $\Omega^1_{\mcP}(A) = \Omega^1_{\mcP} \circ_{\mcP} A$ for all $\mcP$-algebras $A$.
As a $k$-module $\Omega^1_{\mcP}$ is generated by expressions
\begin{equation*}
p(x_{i_1},\dots,dx_{i_j},\dots,x_{i_l})
\end{equation*}
with $p \in \mcP(l)$, $\lbrace i_1,\dots,i_l \rbrace = \lbrace 1,\dots,l \rbrace$ and indeterminates $x_1,\dots,x_l$. These have to fulfill equivariance relations generated by
\begin{equation*}
(p\cdot\sigma) (x_1,\dots,dx_i, \dots,x_l) -  p(\sigma. (x_1,\dots,dx_i,\dots,x_l))
\end{equation*}
for all $\sigma \in \Sigma_l$ with
$\sigma\cdot(y_1,\dots,y_l) = (y_{\sigma^{-1}(1)},\dots,y_{\sigma^{-1}(l)})$. The right $\Sigma$-action is defined by
\begin{equation*}
(p(x_1,\dots,dx_i,\dots,x_l))\cdot\sigma = p(x_{\sigma(1)},\dots,dx_{\sigma(i)},\dots,x_{\sigma(l)})
\end{equation*}
and the right $\mcP$-module structure is determined by
\begin{equation*}
p(x_1,\dots,dx_i,\dots,x_l) \circ_j q =
\begin{cases}
(p \circ_j q)(x_1,\dots,dx_{i+m-1},\dots,x_{l+m-1}), & j<i,\\
(p \circ_j q)(x_1,\dots,dx_i,\dots,x_{l+m-1}), & j>i,\\
\sum_{r=0}^{m-1} (p \circ_j q)(x_1,\dots,dx_{r+i},\dots,x_{l+m-1}), & j=i.
\end{cases}
\end{equation*}
for $p \in \mcP(l), q \in \mcP(m)$.
The $U_{\mcP}$-module structure is given by
\begin{equation*}
q \cdot p(x_1,\dots,dx_i,\dots,x_l) = (p \circ_1 q)(x_1,\dots,dx_i,\dots,x_{l+m})
\end{equation*}
for $q \in U_{\mcP}(m) =\mcP(m+1)$.
\end{proposition}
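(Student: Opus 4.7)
The plan is to construct $\Omega^1_{\mcP}$ explicitly by the given presentation, verify that the prescribed structure maps are well defined, and then identify $\Omega^1_{\mcP}\circ_{\mcP} A$ with the classical module of K\"ahler differentials of a $\mcP$-algebra $A$.

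First, I would define $\Omega^1_{\mcP}(l)$ as the $k$-module spanned by symbols $p(x_{i_1},\dots,dx_{i_j},\dots,x_{i_l})$ with $p\in\mcP(l)$ and $\lbrace i_1,\dots,i_l\rbrace=\lbrace 1,\dots,l\rbrace$, modulo the equivariance relation stated in the proposition, and then check that the prescribed $\Sigma_l$-action descends and that the partial composites $\circ_j$ are well defined. The only delicate case is $j=i$: there $d$ lives in the slot being substituted, and the Leibniz-type sum $\sum_{r=0}^{m-1}(p\circ_j q)(x_1,\dots,dx_{r+i},\dots,x_{l+m-1})$ appears. Verifying the right $\mcP$-module associativity and unitality on generators amounts to the same bookkeeping as the operadic axioms for $\mcP$, keeping track of the extra label $d$.

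Second, I would check that the displayed formula for the $U_{\mcP}$-action makes $\Omega^1_{\mcP}$ an object of ${}_{U_{\mcP}}\mathrm{Mod}(\MP)$. Associativity of the $U_{\mcP}$-action uses the identification $U_{\mcP}(m)=\mcP(m+1)$ together with the definition of the multiplication in $U_{\mcP}$ via $\circ_1$ from Proposition~\ref{prop:UnivEnv}, and compatibility between the $U_{\mcP}$-action and the right $\mcP$-action is another slot-tracking exercise involving the operadic axioms.

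Third, I would identify $\Omega^1_{\mcP}\circ_{\mcP} A$ with $\Omega^1_{\mcP}(A)$ for every $\mcP$-algebra $A$. Both functors $A\mapsto\Omega^1_{\mcP}\circ_{\mcP} A$ and $A\mapsto\Omega^1_{\mcP}(A)$ commute with reflexive coequalizers of $\mcP$-algebras, so it suffices to treat free $\mcP$-algebras $A=\mcP(V)$. On $\mcP(V)$, the assignment $v\mapsto dv$ extends to a $\mcP$-derivation $\mcP(V)\to\Omega^1_{\mcP}\circ_{\mcP}\mcP(V)$ (the case $j=i$ of the right $\mcP$-action encodes exactly the Leibniz rule needed for this extension), and a generator-by-generator argument using the universal property of $\mcP(V)$ shows it is the universal such derivation. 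This yields the identification on free algebras, and hence in general by passage to the coequalizer presentation $\mcP\circ\mcP\circ A \rightrightarrows \mcP\circ A \to A$.

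The main obstacle is concentrated in the case $j=i$ of the right $\mcP$-module structure: checking that the Leibniz sum is well defined modulo the equivariance relation, that it is compatible with further partial compositions, and that it interacts correctly with the $U_{\mcP}$-action, is where essentially all of the combinatorics lives. Once these compatibilities are verified, the remaining steps reduce to routine bookkeeping on the operadic axioms and on the coequalizer presentation of $\mcP$-algebras.
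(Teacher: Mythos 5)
The paper offers no proof of this proposition---it is recalled verbatim from \cite[10.3]{Fre09}---so the only comparison available is with that reference. Your plan (presentation of $\Omega^1_{\mcP}$ by generators and relations, verification of the structure maps with the Leibniz case $j=i$ as the crux, then identification of $\Omega^1_{\mcP}\circ_{\mcP}A$ with $\Omega^1_{\mcP}(A)$ by reduction to free algebras via reflexive coequalizers) is precisely the standard argument carried out there, and it is correct.
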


\begin{remark}
For an operad $\mcP$, the right $\mcP$-module $U_{\mcP}$ modeling universal enveloping algebras is an algebra in right $\mcP$-modules and the K\"ahler differentials $\Omega^1_{\mcP}$ form a left $U_{\mcP}$-module in right $\mcP$-modules.
\end{remark}

\begin{example}
Applied to $\mcP = \Com$ we have that
\begin{equation*}
\Omega^1_{\Com}(\underline e)= k < \mu_{\underline e}(dx_{e},x,\dots,x) \vert e \in \underline e>.
\end{equation*}
The morphism induced by a bijection $\phi \colon \underline e \ra \underline f$ maps $\mu_{\underline e}(dx_e,x,\dots,x)$ to $ \mu_{\underline f}(dx_{\phi(e)},x,\dots,x)$.
The right $\Com$-module structure is given by
\begin{equation*}
\mu_{\underline e}(dx_e,x,\dots,x) \circ_g \mu_{\underline f} = \begin{cases}
\mu_{(\underline e \sqcup \underline f)\setminus \lbrace g \rbrace}(dx_e,\dots,dx,\dots,x), & g\neq e,\\
\sum_{f \in \underline f} \mu_{(\underline e \sqcup \underline f)\setminus \lbrace e \rbrace}(dx_f,x,\dots,x), & g=e
\end{cases}
\end{equation*}
for $e,g\in \underline e$.
The algebra $U_{\Com}$ acts on $\Omega^1_{\Com}$ by
\begin{equation*}
\mu_{\underline e}^U \cdot \mu_{\underline f}(dx_f,x,\dots,x)  = \mu_{\underline e \sqcup \underline f}(dx_f,x,\dots,x).
\end{equation*}
\end{example}

We now construct a cofibrant replacement of $\Omega^1_{\Com}$ via the operadic bar construction, which we will later compare with the twisted module associated to the iterated bar complex.

\begin{defn}(\cite[3.1.9]{Fre04})
Let $(\mcP, \gamma_{\mcP})$ be an operad with $\mcP(0) =0$, $\mcP(1) =k$. Let $\bar \mcP$ be the augmentation ideal of $\mcP$. The reduced bar construction $\bB(\mcP)$ is the quasifree cooperad
\begin{equation*}
\bB(\mcP)= (\mcF^c(\Sigma \bar \mcP), \partial_B)
\end{equation*}
with $\partial_B \colon \mcF^c(\Sigma \bar \mcP) \ra \mcF^c(\Sigma \bar \mcP)$
the coderivation of cooperads which corresponds to the map $\mcF^c(\Sigma \bar \mcP) \ra  \bar \mcP$
 of degree $-1$
given by
\begin{equation*}
\xymatrix{ \mcF^c(\Sigma \bar \mcP) \ar@{>>}[r] & \mcF^c_{(2)}(\Sigma \bar \mcP)\ar[r]^-{\cong} & \Sigma^2 \bar \mcP(I; \bar \mcP) \ar[r]^-{\Sigma^2 \gamma_{\mcP}} & \Sigma^2 \bar \mcP \ar[r]^-{\cong} &  \Sigma \bar \mcP.
}\end{equation*}
Here $\mcF^c_{(2)}(\Sigma \bar \mcP)$ denotes the summand $\Sigma \bar \mcP (I; \Sigma \bar \mcP)$ of weight $2$ in the decomposition $\mcF^c(\Sigma \bar \mcP)= \bigoplus_{i\geq 0} \mcF^c_{(i)}(\Sigma \bar \mcP)$ of the cofree cooperad.
\end{defn}

Set ${\bB}_{(i)}(\mcP) = \mcF^c_{(i)}(\Sigma \bar \mcP)$.  Note that this weight grading is not respected by the differential of $\bB(\mcP)$.

\begin{defn}(\cite[4.4]{Fre04})
The differential graded $\mcP$-bimodule $B(\mcP,\mcP,\mcP)$
is given by
\begin{equation*}
B(\mcP, \mcP, \mcP) = (\mcP \circ  \bB(\mcP)  \circ \mcP,   \partial_L  + \partial_R  ),
\end{equation*}
with the left and right $\mcP$-module derivation  $\partial_L \colon \mcP \circ \bB(\mcP)\circ   \mcP \ra \mcP\circ \bB(\mcP)  \circ \mcP$
induced by the map
\begin{equation*}
\xymatrix{ \bB_{(i)}(\mcP) \ar@{>>}[r] & \bB_{(i-1)}(\mcP)(I; \Sigma \bar \mcP) \ar[r]^-{\cong} &  \bB_{(i-1)}(\mcP)(I; \bar \mcP)  \ar@{^{(}->}[r] & \bB(\mcP)  \circ \mcP. }
\end{equation*}
Here the first map sends an element $x \in \bB(\mcP)$ of the form $ x= (b;s p_1,\dots,s p_r)$ with $s p_i \in \Sigma\bar \mcP$ and $b\in \bB(\mcP)$ to
\begin{equation*}
\sum_{j=1}^r \pm ((b;s p_1,\dots,s p_{j-1},1,s p_{j+1},\dots,s p_r);1,\dots,1,p_j,1,\dots,1).
\end{equation*}
The left and right $\mcP$-module derivation $\partial_R \colon \mcP \circ  \bB(\mcP) \circ \mcP  \ra \mcP  \circ \bB(\mcP)\circ \mcP$
is induced by the map
\begin{equation*}
\xymatrix{ \bB(\mcP) \ar[r] & \mcP  \circ \bB(\mcP) }
\end{equation*}
which maps $(s p;b_1,\dots,b_s) \in \bB(\mcP)$ with $s p \in \Sigma \bar \mcP$ and $b_i \in \bB(\mcP)$ to $(p;b_1,\dots,b_s)$.
For the exact signs see \cite[4.4.3]{Fre04}.
\end{defn}

\begin{defn}(\cite[4.4]{Fre04})
Let $\mcP$ be an operad, $L$ a left $\mcP$-module and $R$ a right $\mcP$-module.
The differential graded $\Sigma_*$-module $B(R,\mcP,L)$
is given by
\begin{equation*}
B(R, \mcP, L) = R \circ_{\mcP} B(\mcP, \mcP, \mcP) \circ_{\mcP} L.
\end{equation*}
We denote by $\epsilon_B$ the augmentation
\begin{equation*}
\epsilon_B \colon B(R, \mcP, L) \ra R \circ_{\mcP} L.
\end{equation*}
\end{defn}

The object $B(R,\mcP,L)$ inherits a grading by weight components $B_{(i)}(R,\mcP, L)$ from $\bB(\mcP)$. The summand $B_{(i)}(R,\mcP, L)$ corresponds to expressions in $R  \circ \mcF^c(\Sigma \bar \mcP) \circ  L$ with $i$ occurences of elements in $\bar \mcP$.

From the identity
\begin{equation*}
\Omega_{\mcP}(B) = (U_{\mcP}(B)\otimes Y, \partial'_{\alpha})
\end{equation*}
for a quasifree $\mcP$-algebra $B=(\mcP(Y),\partial_{\alpha})$ (see \eg \cite[2.1.1]{Ho10}) and since $U_{\Com} \circ_{E_n} A = U_{\Com}(A)$ for a commutative algebra $A$ we deduce:

\begin{lemma}\label{lem:HomComplexSimple}
Let $1\leq n \leq \infty$. If $n=\infty$ let $\epsilon_{{\Com}}$ denote the map  $E \ra {\Com}$, otherwise let $\epsilon_{{\Com}}$ denote the composite  $E_n \ra E \ra {\Com}$. Given $b \in B(I,E_n,I)$ considered as an element of $B(E_n,E_n,E_n)$, assume that $\partial_R(b)$ has an expansion
\begin{equation*}
\partial_R(b) = \sum_i e^{(i)}(b^{(i)}_1,\dots,b^{(i)}_{k_i}) \in B(E_n,E_n,I)
\end{equation*}
with $e^{(i)} \in E_n$, $b^{(i)}_1,\dots,b^{(i)}_{k_i} \in B(I,E_n,I)$. Consider the element
\begin{equation*}
\sum_i e^{(i)}(b^{(i)}_1,\dots,b^{(i)}_{j-1},x,b^{(i)}_{j+1},\dots,b^{(i)}_{k_i})
\end{equation*}
in $U_{E_n}(B(I,E_n,I))$.
We define
\begin{equation*}
\partial_{\theta_B} \colon U_{\Com} \otimes B(I, E_n,E_n) \ra U_{\Com} \otimes B(I,E_n,E_n)
\end{equation*}
to be the morphism in $\UCModMEn$ induced by
\begin{equation*}
\theta_B( b) = \sum_{i,j} \epsilon_{{\Com}}(e^{(i)})(\epsilon_{ {\Com}}\epsilon_Bb^{(i)}_1,\dots,\epsilon_{ {\Com}}\epsilon_Bb^{(i)}_{j-1}, x , \epsilon_{ {\Com}}\epsilon_Bb^{(i)}_{j+1},\dots,\epsilon_{{\Com}}\epsilon_Bb^{(i)}_{k_i}) \otimes b^{(i)}_j.
\end{equation*}
Then for $A$ a commutative algebra in $\dgmod$ we find that
\begin{equation*}
(U_{\Com} \otimes B(I, E_n,E_n), \theta_B) \circ_{E_n} A \cong U_{\Com}( A) \otimes_{U_{E_n}(Q_A)} \Omega^1_{E_n}(Q_A)
\end{equation*}
with $Q_A = B(E_n, E_n,A)$.
\end{lemma}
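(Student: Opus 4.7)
The plan is to reduce the lemma to the standard description of K\"ahler differentials of a quasifree algebra, and then to identify the resulting twisting differential with $\partial_{\theta_B}$ at the level of right $E_n$-modules so that the comparison becomes functorial in $A$.

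First I would recall that the $E_n$-algebra $Q_A = B(E_n, E_n, A)$ is, by construction, a quasifree $E_n$-algebra. Indeed $B(E_n,E_n,E_n) = (E_n \circ \bar B(E_n) \circ E_n, \partial_L + \partial_R)$ and passing to $Q_A = B(E_n,E_n,E_n) \circ_{E_n} A$, the derivation $\partial_L$ becomes the structural action while $\partial_R$ yields a derivation on the free $E_n$-algebra generated by the $\Sigma_*$-module $Y := B(I,E_n,E_n) \circ_{E_n} A \cong \bar B(E_n) \circ_{E_n} A$. Thus $Q_A = (E_n(Y), \partial_\alpha)$ where the twisting derivation $\partial_\alpha \colon Y \to E_n(Y)$ is exactly $\partial_R$ pushed to $Y$.

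Next I would apply the identity $\Omega^1_{\mcP}(B) = (U_{\mcP}(B) \otimes Y, \partial'_\alpha)$ recalled just before the lemma to the quasifree $E_n$-algebra $Q_A$. This gives
\begin{equation*}
\Omega^1_{E_n}(Q_A) = (U_{E_n}(Q_A) \otimes Y, \partial'_\alpha),
\end{equation*}
where $\partial'_\alpha$ is determined, on a generator $y\in Y$ with expansion $\partial_\alpha(y) = \sum_i e^{(i)}(y^{(i)}_1,\dots, y^{(i)}_{k_i})$ with $e^{(i)} \in E_n$ and $y^{(i)}_j \in Y$, by the formula
\begin{equation*}
\partial'_\alpha(1 \otimes y) = \sum_{i,j} e^{(i)}(y^{(i)}_1,\dots,y^{(i)}_{j-1},x,y^{(i)}_{j+1},\dots,y^{(i)}_{k_i}) \otimes y^{(i)}_j,
\end{equation*}
where the first tensor factor is understood in $U_{E_n}(Q_A)$. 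Tensoring on the left over $U_{E_n}(Q_A)$ with $U_\Com(A)$, and using that the augmentation $Q_A \to A$ (followed by the natural map $U_{E_n}(A) \to U_\Com(A)$) is encoded by the operadic projection $\epsilon_{\Com} \circ \epsilon_B$, we obtain
\begin{equation*}
U_\Com(A) \otimes_{U_{E_n}(Q_A)} \Omega^1_{E_n}(Q_A) \cong U_\Com(A) \otimes Y
\end{equation*}
as graded $k$-modules, with the twisting differential precisely given by the formula defining $\theta_B$.

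Finally I would translate this back into the language of right $E_n$-modules. Since $U_\Com = U_\Com \circ_{E_n} E_n$ (via the morphism $E_n \to \Com$) and $Y = B(I,E_n,E_n)\circ_{E_n} A$, the commutation of tensor products of right $E_n$-modules with the relative composition $\circ_{E_n} A$ yields the identification
\begin{equation*}
U_\Com(A) \otimes Y \cong (U_\Com \otimes B(I, E_n, E_n)) \circ_{E_n} A.
\end{equation*}
It then remains to verify that the twisting differential computed in the previous paragraph is exactly the one induced by $\partial_{\theta_B}$ of the lemma after applying $- \circ_{E_n} A$; but this is built into the definition of $\theta_B$, which is precisely designed to lift the above formula to the level of right $E_n$-modules before evaluation on $A$. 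The main technical point, and the step I expect to be most delicate, is the sign and bookkeeping check relating $\partial_R$, the derivation $\partial_\alpha$ on the generators of $Q_A$, and the formula for $\partial'_\alpha$ on $\Omega^1_{E_n}(Q_A)$; once these match, the isomorphism of the lemma is immediate.
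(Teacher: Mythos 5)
Your proposal is correct and follows essentially the same route as the paper: the lemma is deduced there precisely from the identity $\Omega^1_{\mcP}(B)=(U_{\mcP}(B)\otimes Y,\partial'_{\alpha})$ for a quasifree $\mcP$-algebra $B=(\mcP(Y),\partial_{\alpha})$ together with $U_{\Com}\circ_{E_n}A=U_{\Com}(A)$, which is exactly your reduction via $Q_A=(E_n(Y),\partial_R)$ with $Y=B(I,E_n,E_n)\circ_{E_n}A$. The remaining bookkeeping identifying the induced twist with $\theta_B$ after $-\circ_{E_n}A$ is, as you note, built into the definition of $\theta_B$, and the paper treats it the same way.
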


Since $(U_{\Com}\otimes B(I,E_n,E_n), \partial_{\theta_B})$ is quasifree in $\UCModMEn$ we know from Corollary \ref{cor:CofUModP} that $(U_{\Com} \otimes B(I,E_n,E_n), \partial_{\theta_B})$ is cofibrant in  $\UCModME$.

\begin{proposition}
We define a morphism
\begin{equation*}
\ev \colon (U_{\Com} \otimes B(I,E_n,E_n), \partial_{\theta_B}) \ra \Omega^1_{\Com}
\end{equation*}
of left $U_{\Com}$-modules in right $E_n$-modules as follows:  Restricted to $B(I,E_n,I)$ the map $\ev$ is
\begin{equation*}
\xymatrix{B(I,E_n,I) \ar@{->>}[r] & B_{(0)} (I,E_n,I) =I \ar[r] & \Omega^1_{\Com}}
\end{equation*}
where the last map sends $1\in I(\lbrace e \rbrace)$ to $\mu_{\lbrace e \rbrace}(dx_e)$. This yields a well defined morphism in $\UCModMEn$.
\end{proposition}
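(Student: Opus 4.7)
The plan is to extend $\ev$ from the generating $\Sigma_*$-module $B(I,E_n,I)$ to all of $U_{\Com}\otimes B(I,E_n,E_n)$ via a freeness argument, and then to verify that this extension commutes with the total differential. For the extension step I would use that $B(I,E_n,E_n)\cong B(I,E_n,I)\circ E_n$ as right $E_n$-modules, so by Proposition \ref{prop:FreeUModP} the object $U_{\Com}\otimes B(I,E_n,E_n)$ is the free object in $\UCModMEn$ generated by the $\Sigma_*$-module $B(I,E_n,I)$. The prescription on generators therefore extends uniquely to a morphism in $\UCModMEn$, the extension being realised through the right $\Com$-action on $\Omega^1_{\Com}$ of Proposition \ref{lem:KaehlerRightModule} (restricted along $\epsilon_{\Com}\colon E_n\to \Com$) and the tautological left $U_{\Com}$-action.

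The substantive content is chain-map compatibility. Since $\Omega^1_{\Com}$ carries no differential it suffices to check $\ev\circ d=0$ on generators, where $d$ assembles the bar differential $\partial_B$, the bimodule derivations $\partial_L$ and $\partial_R$, the twist $\partial_{\theta_B}$, and the internal differential of $E_n$. I would organise the verification by the weight grading of $\bB(E_n)$. In weight zero there is nothing to check; in weight $\geq 2$ the map $\ev$ vanishes on $b$, and every component of $d(b)$ still lands in positive weight of the $B(I,E_n,I)$-factor. The pieces from $\partial_B$ and $\partial_L$ simply reduce weight by one, and for $\partial_{\theta_B}$ the augmentation $\epsilon_B$ that appears inside $\theta_B$ forces every summand to retain precisely one positive-weight factor in the distinguished $B(I,E_n,I)$-slot, which is killed by $\ev$.

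The only genuinely interesting case is weight $1$, say $b=sp$ with $p\in \bar E_n$. Here $\partial_B(sp)=0$ since weight $1$ admits no internal composition, so the required cancellation is between $\ev(\partial_L(sp))$ and $\ev(\partial_{\theta_B}(1\otimes sp))$. Unwinding the definitions gives $\partial_L(sp) = 1_I\circ p$, and applying $\ev$ together with the right $\Com$-action formula of Proposition \ref{lem:KaehlerRightModule} yields a sum $\sum_{f\in \underline e}\pm \mu_{\underline e}(dx_f,x,\dots,x)$. On the other hand $\partial_R(sp)=p(1_I,\dots,1_I)$ feeds into $\theta_B(sp)=\sum_j \pm \mu^U_{\underline e\setminus \lbrace e_j\rbrace}\otimes 1_{\lbrace e_j\rbrace}$, whose image under $\ev$ is $\sum_j \pm \mu_{\underline e}(dx_{e_j},x,\dots,x)$ by the $U_{\Com}$-action on $\Omega^1_{\Com}$. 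The two sums index the same family of elements of $\Omega^1_{\Com}(\underline e)$, so the verification reduces to a sign check: the main obstacle I anticipate is confirming that the signs arising from $\partial_L$, from $\partial_R$, and from the conventions in Definition \ref{defn:theta} conspire to make the two sums cancel exactly. Once this sign bookkeeping is in place, $\ev\circ d = 0$ on all generators and the proposition follows.
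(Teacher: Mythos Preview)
Your proposal is correct and follows essentially the same route as the paper. Both arguments use the freeness of $U_{\Com}\otimes B(I,E_n,E_n)$ over $B(I,E_n,I)$ to reduce to checking $\ev\circ d=0$ on generators, observe that only the weight-$1$ component requires work (the paper simply asserts this; you supply the reason via the $\epsilon_B$ appearing in $\theta_B$), and then verify that $\ev\,\partial_L$ and $\ev\,\partial_{\theta_B}$ applied to $sp\in\Sigma\bar E_n(\underline e)$ each yield $\pm\sum_{e\in\underline e}\epsilon_{\Com}(p)(dx_e,x,\dots,x)$ with opposite signs.
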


\begin{proof}
By definition $\ev$ maps $B_{(i)}(I,E_n,I)$ to zero for $i\geq 1$, hence since $B(I,E_n,E_n)$ is a quasifree right $E_n$-module it suffices to show that
\begin{equation*}
ev (\partial_B + \partial_L + \theta_B) = 0
\end{equation*}
on $B_{(1)}(I,E_n,I) \subset B(I,E_n,E_n).$  Let $a \in \bar E_n (\underline e)= (I  \circ \bar E_n \circ  I) (\underline e)= B_{(1)}(I,E_n,I)(\underline e)$. Note that $\partial_B$ vanishes on $B_{(1)}(I,E_n,E_n)$.  Both $\ev \partial_L$ as well as $\ev \theta_B$ map $a$ to
\begin{equation*}
\pm \sum_{e\in \underline e}  \epsilon_{ {\Com}}(a) (dx_e,x,\dots,x),
\end{equation*}
with opposite signs.
\end{proof}

\begin{lemma}(\cite[10.3]{Fre09})\label{ref:UCToOmega}
There is an isomorphism
\begin{equation*}
U_{\mcP} \otimes I \ra \Omega^1_{\mcP}
\end{equation*}
of left $U_{\mcP}$-modules given by mapping $1\in I(\lbrace e \rbrace)$ to $\mu_{\lbrace e \rbrace}(dx_e)$.
\end{lemma}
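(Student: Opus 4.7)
The map in question is the left $U_\mcP$-linear extension of $1 \in I(\{e\}) \mapsto \mu_{\{e\}}(dx_e)$. My plan is to identify both sides with a common $\Sigma_*$-module in a canonical way, read off that the prescribed map realizes this identification, and verify that it transports the left $U_\mcP$-module structures correctly.

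First I would unpack the source. Since $I$ is concentrated in arity one with $I(\{e\}) = k$, and since $U_\mcP(\underline f) = \mcP(\underline f \sqcup \{\ast\})$ with distinguished slot $\ast$ carrying the left module action (by Proposition \ref{prop:UnivEnv}), we have the $\Sigma_*$-module identification
\[
(U_\mcP \otimes I)(\underline e) = \bigoplus_{e \in \underline e} \mcP\bigl((\underline e \setminus \{e\}) \sqcup \{\ast\}\bigr),
\]
on which the free left $U_\mcP$-action is partial composition at $\ast$.

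Second I would unpack the target. By Proposition \ref{lem:KaehlerRightModule}, $\Omega^1_\mcP(\underline e)$ is generated by symbols $p(x_{i_1},\dots,dx_{i_j},\dots,x_{i_l})$ with $p \in \mcP(l)$, modulo an equivariance relation absorbing the $\Sigma_l$-action on $p$ into a relabeling of the variables. Using these relations, every element admits a unique canonical form indexed by the differentiated variable $e \in \underline e$: each summand becomes $\mcP((\underline e \setminus \{e\}) \sqcup \{\ast\})$, with $\ast$ now marking the input slot of $p$ that receives $dx_e$. This produces a natural $\Sigma_*$-module splitting
\[
\Omega^1_\mcP(\underline e) = \bigoplus_{e \in \underline e} \mcP\bigl((\underline e \setminus \{e\}) \sqcup \{\ast\}\bigr)
\]
identical to the one obtained for the source.

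Under these identifications the prescribed map becomes the identity on every summand, hence is manifestly a bijection of $\Sigma_*$-modules, and it only remains to check that the $U_\mcP$-module structure on $\Omega^1_\mcP$ translates to partial composition at $\ast$. This is immediate from the explicit formula $q \cdot p(x_1,\dots,dx_i,\dots,x_l) = (p \circ_1 q)(x_1,\dots,dx_i,\dots,x_{l+m})$ of Proposition \ref{lem:KaehlerRightModule}, which inserts $q \in U_\mcP(m) = \mcP(m+1)$ into $p$ precisely at the slot marked by $dx_i$, \ie at $\ast$. The main potential obstacle is establishing the uniqueness of the canonical normal form in the target, a tedious but direct consequence of the equivariance relations; once that bookkeeping is in place, the rest of the proof is purely formal.
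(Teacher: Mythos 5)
The paper offers no proof of this lemma --- it is quoted directly from \cite[10.3]{Fre09} --- and your verification is correct and is essentially the argument underlying the cited result: both sides are canonically identified with the $\Sigma_*$-module $\bigoplus_{e\in\underline e}\mcP((\underline e\setminus\lbrace e\rbrace)\sqcup\lbrace\ast\rbrace)$, with $\ast$ the distinguished input of $U_{\mcP}$ on the source side and the differentiated input on the target side, and the stated map is the identity under these identifications. The one point that genuinely needs checking is the one you isolate, namely that the equivariance relations of Proposition~\ref{lem:KaehlerRightModule} reduce every element of $\Omega^1_{\mcP}(\underline e)$ to a unique normal form indexed by the differentiated variable; this is exactly the presentation established in \cite[10.3]{Fre09}, and with it in hand the compatibility of the left $U_{\mcP}$-actions is the routine comparison of partial compositions at the marked slot that you describe.
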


\begin{proposition}
The morphism
\begin{equation*}
\ev\colon(U_{\Com} \otimes B(I,E_n,E_n), \partial_{\theta_B})\ra \Omega^1_{\Com}
\end{equation*}
is a weak equivalence.
\end{proposition}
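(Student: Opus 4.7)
The plan is to prove that $\ev$ is a quasi-isomorphism in each arity, since weak equivalences in $\UCModMEn$ are created as arity-wise quasi-isomorphisms of underlying chain complexes. I would use a spectral sequence associated with the decreasing filtration $F^p$ on $U_{\Com}\otimes B(I,E_n,E_n)$ by the arity of the $U_{\Com}$-factor.

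First I would check that this filtration is stable under the total differential. The standard two-sided bar differentials $\partial_B$, $\partial_L$, $\partial_R$ and the internal differentials of $E_n$ and $U_{\Com}$ all preserve $U_{\Com}$-arity, while the definition of $\theta_B$ immediately shows that $\partial_{\theta_B}$ strictly raises it: each term of $\partial_R(b)$ decomposes $b\in B(I,E_n,I)$ into strictly smaller pieces, and one of those pieces is absorbed into the $U_{\Com}$-slot. Since $U_{\Com}$-arity is bounded above by total arity, the filtration is bounded in each arity, so the spectral sequence converges.

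Next I would identify the $E_1$-page. The associated graded retains only the plain two-sided bar differentials, and applying the classical quasi-isomorphism $\epsilon_B\colon B(I,E_n,E_n)\to I$ (obtained by applying $I\circ_{E_n}-$ to the bimodule resolution $B(E_n,E_n,E_n)\to E_n$, using that the source is cofibrant as a bimodule) gives $E_1\cong U_{\Com}\otimes I$. Every class of $E_1$ admits a canonical representative $u\otimes 1_e$ lying in the weight-$0$ component $U_{\Com}\otimes B_{(0)}(I,E_n,I) = U_{\Com}\otimes I$; on this component $\partial_R$ vanishes, hence $\theta_B(1_e)=0$, and the total differential annihilates $u\otimes 1_e$. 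This forces $d_r=0$ for all $r\geq 1$, so the spectral sequence collapses and the total homology equals $U_{\Com}\otimes I$.

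Finally, the morphism $\ev$ manifestly respects the filtration (it kills the weight $\geq 1$ part of $B(I,E_n,I)$ and is $U_{\Com}$-linear) and, on $E_\infty=E_1$, agrees with the canonical isomorphism $U_{\Com}\otimes I\cong \Omega^1_{\Com}$ of Lemma~\ref{ref:UCToOmega}, hence is a quasi-isomorphism in each arity. The hardest input will be the verification that $\epsilon_B\colon B(I,E_n,E_n)\to I$ is a quasi-isomorphism, which relies on classical bar construction theory for operadic bimodules and on the fact that $I\circ_{E_n}-$ preserves weak equivalences between cofibrant objects; once this is available the remaining steps are essentially formal.
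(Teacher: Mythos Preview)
Your proposal is correct and follows essentially the same approach as the paper: filter by the arity of the $U_{\Com}$-factor, identify the $E^1$-page using the augmentation $\epsilon_B\colon B(I,E_n,E_n)\to I$ (which the paper cites from \cite[4.1.3]{Fre04}), and conclude via spectral sequence comparison with the analogous filtration on $\Omega^1_{\Com}$. Your explicit degeneration argument via cycle representatives is an extra step that the paper bypasses by directly invoking the comparison theorem on the map of $E^1$-pages, but it does no harm.
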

\begin{proof}
Filter $(U_{\Com} \otimes B(I,E_n,E_n),\partial_{\theta_B})$ by
\begin{equation*}
F^p = \bigoplus_{i\geq p} U_{\Com}(i)\otimes B(I,E_n,E_n)
\end{equation*}
and $\Omega^1_{\Com}$ by
\begin{equation*}
G^p= \bigoplus_{i\geq p} \Im (U_{\Com}(i) \otimes I)
\end{equation*}
where $\Im (U_{\Com}(i) \otimes I)$ is the image of $U_{\Com}(i) \otimes I$ under the isomorphism $U_{\Com} \otimes I \ra \Omega^1_{\Com}$ defined in Lemma \ref{ref:UCToOmega}. The morphism $\ev$ respects this filtration. We consider the associated spectral sequences.
Observe that the only part of the differential of $(U_{\Com} \otimes B(I,E_n,E_n),\partial_{\theta_B})$ that maps $F^p$ to $F^{p+1}$ is the part induced by $\theta_B$. Hence the $E^1$-term of the spectral sequence associated to the filtration $F$ is given by
\begin{equation*}
E^1_{p,q} = U_{\Com}(p) \otimes H_{q}(B(I,E_n,E_n)).
\end{equation*}
But the map $E^1(\ev)$ coincides with the tensor product of the identity and the augmentation $B(I,E_n,E_n) \ra I$ composed with the isomorphism defined in Lemma \ref{ref:UCToOmega} . According to \cite[4.1.3]{Fre04} this is a quasiisomorphism.
\end{proof}

%%%%%%%%%%%%
%%%%%%%%%%%%

\subsection*{The proof of the comparison results}

We defined a twisting morphism $\partial_{\theta}$ on $U_{\Com} \otimes \Sigma^{-n}B^n_{\Com}$ in Definition \ref{defn:theta} and Proposition \ref{defn:thetaForInfty} and showed in Proposition \ref{prop:LambdaRestrictsToEn} that the sum of $\partial_{\theta}$ and the differential $\partial_{\gamma}$ of $\Sigma^{-n}B^n_{\Com}$ can be lifted to a differential
\begin{equation*}\partial_{\lambda} \colon U_{\Com} \otimes (\Sigma^{-n} T^n\circ{E_n} ) \ra U_{\Com} \otimes (\Sigma^{-n} T^n\circ{E_n}).\end{equation*}
 We will construct a trivial fibration
\begin{equation*}(U_{\Com} \otimes (\Sigma^{-n} T^n\circ{E_n}), \partial_{\lambda}) \ra \Omega^1_{\Com}\end{equation*}
which will then allow us to compare $(U_{\Com} \otimes (\Sigma^{-n} T^n\circ{E_n}), \partial_{\lambda})$ and $(U_{\Com} \otimes B(I,E_n,E_n), \partial_{\theta_B})$ and to deduce that $(M \otimes_{A_+} A_+ \otimes \Sigma^{-n} B^n(A), \partial_{\theta})$ computes $E_n$-homology of $A$ with coefficients in $M$.

\begin{defn}
For $1 \leq n \leq \infty$ we define a morphism
\begin{equation*}
\Phi \colon (U_{\Com} \otimes (\Sigma^{-n} T^n\circ {E_n}), \partial_{\lambda}) \ra \Omega^1_{\Com}
\end{equation*}
in $\UCModME$  as follows:  Restricted to  the generators $\Sigma^{-n}T^n$  let $\Phi$ be the map
\begin{equation*}
\xymatrix{\Sigma^{-n}T^n \ar[r] & I \ar[r] &  \Omega^1_{\Com}}
\end{equation*}
with the first map given by mapping the element represented by the trunk tree $[0] \ra \dots \ra [0]$ labeled by $e$ to $1\in I(\lbrace e\rbrace)$ and the second map sending $1\in I(\lbrace e\rbrace)$ to $\mu_{\lbrace e\rbrace}(dx_e)$.
\end{defn}

\begin{lemma}
The map $\Phi$ is a chain map.
\end{lemma}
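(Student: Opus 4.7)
The strategy is to factor $\Phi$ through the commutative operad. Set
\begin{equation*}
p := U_{\Com} \otimes (\Sigma^{-n} T^n \circ \psi) \colon U_{\Com} \otimes (\Sigma^{-n} T^n \circ E_n) \to U_{\Com} \otimes (\Sigma^{-n} T^n \circ \Com),
\end{equation*}
and let
\begin{equation*}
\Phi' \colon (U_{\Com} \otimes (\Sigma^{-n} T^n \circ \Com), \partial_{\theta} + U_{\Com} \otimes \partial_{\gamma}) \to \Omega^1_{\Com}
\end{equation*}
be the unique morphism of left $U_{\Com}$-modules in right $\Com$-modules extending the $\Sigma_*$-module map which sends the trunk tree with single leaf $e$ to $\mu_{\lbrace e \rbrace}(dx_e)$ and vanishes on every other tree. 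By construction $\Phi = \Phi' \circ p$.

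Since $\Omega^1_{\Com}$ is concentrated in degree zero in each arity, it carries trivial differential, so the chain map property for $\Phi$ amounts to showing that $\Phi$ annihilates the internal $E_n$-differential and $\partial_\lambda$ separately. The internal part is immediate from the factorization, because $\psi \colon E_n \to \Com$ is a chain map into an object with zero differential. For the twisting part, the commutativity of the diagram that defines $\partial_\lambda$ gives $p \circ \partial_\lambda = (\partial_\theta + U_\Com \otimes \partial_\gamma) \circ p$, so it suffices to prove $\Phi' \circ (\partial_\theta + U_\Com \otimes \partial_\gamma) = 0$. Both composites are morphisms in $\UCModMC$, so the identity can be verified on the generating $\Sigma_*$-module $\Sigma^{-n}T^n$.

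On a generator $t \in T^n$, the map $\Phi'$ only detects those summands of $(\partial_\theta + \partial_\gamma)(t)$ whose underlying tree is a trunk tree, that is, an $n$-level tree with a single leaf. Since $\partial_\theta$ removes one leaf and the bar differential $\partial_\gamma$ fuses a pair of adjacent leaves via a commutative product at some level of the iterated bar complex, trees with a single leaf or with more than two leaves produce no such summand. So one is reduced to $t$ having exactly two leaves. A further inspection shows that if the two leaves do not already share a common parent at level $n-1$, then $\partial_\theta(t) = 0$ and every term of $\partial_\gamma(t)$ retains two leaves, so $\Phi'$ still kills the image. The only remaining case is $t$ of the form $s^{n-1}(s e_0 \otimes s e_1)$, and a direct computation confronts the two terms of $\theta(t)$, coming from the minimal and maximal leaves of the shared $1$-fiber, with the single trunk-tree term produced by the bar product inside $\partial_\gamma(t)$, whose image under $\Phi'$ reads $\mu_{\lbrace e_0,e_1\rbrace}(dx_{e_0},x) + \mu_{\lbrace e_0,e_1\rbrace}(dx_{e_1},x)$. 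The four resulting summands cancel pairwise.

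The main obstacle is the sign bookkeeping in this last step. The signs $(-1)^{s_{n,x}-1}$ and $(-1)^{s_{n,y}}$ of Definition \ref{defn:theta} are calibrated precisely so as to match the Koszul signs generated by sliding a degree $-1$ action map through the suspensions of the iterated bar complex; once those signs are tracked, the cancellation is automatic and the chain map property follows.
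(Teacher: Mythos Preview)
Your proof is correct and follows essentially the same route as the paper: factor $\Phi$ through the projection $E_n \to \Com$ to a map $\Phi'$ in $\UCModMC$, reduce the verification of $\Phi'\circ(\partial_\theta + U_{\Com}\otimes\partial_\gamma)=0$ to the generating $\Sigma_*$-module $\Sigma^{-n}T^n$, and then to the single two-leaf tree $[1]\to[0]\to\cdots\to[0]$, where a direct computation of the four terms shows the cancellation. The paper compresses the reduction step into the sentence ``a closer look at the maps $\theta$ and $\gamma$ reveals that we only have to prove the identity on $\Sigma^{-n}T^n(\underline e)$ for $\vert\underline e\vert=2$''; your argument spells out why only trunk-tree summands are seen by $\Phi'$ and why such summands can arise only from a two-leaf tree with both leaves in the same fiber of $f_n$, which is a helpful elaboration but not a different idea.
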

\begin{proof}
Observe that $\Phi$ is zero on $\Sigma^{-n}T^n(\underline e)$ unless $\underline e$ is a singleton. Also note that $\Phi$ factors as
\begin{equation*}
(U_{\Com} \otimes (\Sigma^{-n}T^n\circ E_n), \partial_{\lambda})
\xrightarrow{U \otimes (\Sigma^{-n} T^n \circ \epsilon_{{\Com}})}
(U_{\Com} \otimes (\Sigma^{-n} T^n\circ {\Com}), \partial_{\theta} + U_{\Com} \otimes \partial_{\gamma})
\xrightarrow{\Phi'}
\Omega^1_{\Com}
\end{equation*}
with $\Phi'$ the morphism in $\UCModMC$ induced by $\Phi_{\vert \Sigma^{-n}T^n}$.
A closer look at the maps $\theta$ and $\gamma$ reveals that we only have to prove that
we have the identity
$\Phi(\theta + \eta_U \otimes \gamma) =0$
on the object $\Sigma^{-n}T^n(\underline e)$
and for a labeling set with 2 elements $\underline e = \lbrace e_1,e_2 \rbrace$.
Consider the decorated tree $t(e_1,e_2)\in T^n(\lbrace e_1,e_2 \rbrace)$ with $t = [1] \ra [0] \ra\dots\ra [0]$ decorated by $e_1$ and $e_2$. Then $\Phi  \theta$ and $\Phi (\eta_U \otimes \gamma)$ both map $t(e_1,e_2)$ to
$\pm(\mu_{\lbrace e_1,e_2\rbrace}(dx_{e_1},x) + \mu_{\lbrace e_1,e_2\rbrace}(dx_{e_2},x))$
with opposite signs.
\end{proof}

\begin{proposition}The morphism
$\Phi \colon (U_{\Com} \otimes (\Sigma^{-n} T^n\circ{E_n}), \partial_{\lambda}) \ra \Omega^1_{\Com}$
is a weak equivalence.
\end{proposition}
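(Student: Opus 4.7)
The plan is to mimic the spectral sequence argument just used for the morphism $\ev$, filtering source and target by the arity of the $U_{\Com}$-factor. Put $F^p = \bigoplus_{i \geq p} U_{\Com}(i) \otimes (\Sigma^{-n} T^n \circ E_n)$ on the source and $G^p = \bigoplus_{i \geq p} \Im(U_{\Com}(i) \otimes I)$ on the target, exactly as in the proof concerning $\ev$. Since $\Phi$ is built from the $\Sigma_*$-module map $\Sigma^{-n} T^n \to I$ (trunk tree $\mapsto 1$) followed by the isomorphism $U_{\Com} \otimes I \cong \Omega^1_{\Com}$ of Lemma \ref{ref:UCToOmega}, it manifestly preserves these filtrations.

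The next step is to identify the $E^1$-pages. By the inductive construction of $\lambda$ in Proposition \ref{prop:lifting}, with $\lambda_0 = \tilde\iota(\theta + \eta_U \otimes \gamma)$ and higher $\lambda_m$ obtained by composing with $\tilde\nu$, the only contributions to $\partial_\lambda$ that strictly raise the $U_{\Com}$-arity come from the lift of $\theta$; the $U_{\Com}$-arity preserving part of $\partial_\lambda$ agrees, on generators $1 \otimes x$ with $x \in \Sigma^{-n}T^n \circ E_n$, with $1 \otimes \partial_\epsilon(x)$, where $\partial_\epsilon$ is the twist of \cite[2.5]{Fre11} making $\Sigma^{-n}B^n_{E_n} = (\Sigma^{-n}T^n \circ E_n, \partial_\epsilon)$. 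Hence
\begin{equation*}
E^1_{p,*}(\mathrm{source}) = U_{\Com}(p) \otimes H_*(\Sigma^{-n}B^n_{E_n}).
\end{equation*}
By the trivial-coefficient theorem recalled from \cite[8.21, 9.4]{Fre11}, the augmentation $\Sigma^{-n}B^n_{E_n} \to I$ is a quasi-isomorphism, so this $E^1$-term collapses to $U_{\Com}(p) \otimes I$ concentrated in degree $0$. Since $\Omega^1_{\Com}$ has trivial internal differential, $E^1_{p,0}(\mathrm{target})$ is likewise $U_{\Com}(p) \otimes I$ via Lemma \ref{ref:UCToOmega}, and $E^1(\Phi)$ becomes the identity, in particular an isomorphism.

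It remains to upgrade this to a genuine quasi-isomorphism. For $1 \leq n < \infty$ both filtrations are exhaustive and, in each fixed total arity, bounded (since $U_{\Com}$ is concentrated in nonnegative arities and the complexes are bounded below), so the classical convergence of bounded spectral sequences applies. For $n = \infty$ one checks that the suspension maps defining $\Sigma^{-\infty}B^\infty$ are compatible with $\partial_\theta$ (Proposition \ref{defn:thetaForInfty}) and with the $\mcK$-cellular filtration, hence with $\partial_\lambda$, so the argument passes to the colimit by commuting homology with filtered colimits. The main obstacle I expect is the careful identification of the $U_{\Com}$-arity-preserving part of $\partial_\lambda$ with $1 \otimes \partial_\epsilon$: this requires unwinding the inductive formula $\lambda_m = \sum_{a+b = m-1} \tilde\nu\, \partial_{\lambda_a} \lambda_b$ and verifying that the pieces originating from $\eta_U \otimes \gamma$ assemble into precisely the twist $\partial_\epsilon$ produced by the same inductive recipe in the $U = I$ case treated in \cite{Fre11}.
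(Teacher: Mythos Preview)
Your argument is correct and follows essentially the same route as the paper: filter by the $U_{\Com}$-arity, identify the associated graded differential as $U_{\Com}\otimes\partial_{\epsilon}$, and invoke the trivial-coefficient quasi-isomorphism $\Sigma^{-n}B^n_{E_n}\xrightarrow{\sim} I$ to get an $E^1$-isomorphism. The point you flag as the ``main obstacle''---that the arity-preserving part of $\partial_{\lambda}$ is exactly $U_{\Com}\otimes\partial_{\epsilon}$---is precisely what the paper asserts in the line $\partial_{\lambda}=U_{\Com}\otimes\partial_{\epsilon}+\partial'$, and your sketch of the inductive verification (the $\eta_U\otimes\gamma$ summand propagates through the recursion for $\lambda_m$ to reproduce the recursion for $\epsilon_m$) is the right justification.
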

\begin{proof}
Recall from the proof of Proposition \ref{prop:lifting}
that we have $\lambda = \sum_{m\geq 0} \lambda_m$
with
$\lambda_0= \tilde{\iota}(\theta + \eta_{U_{\Com}} \otimes \lambda)$
and
$\lambda_m = \sum_{a+b=m-1} \tilde \nu \partial_{\alpha_a} \alpha_b$.
Hence $\partial_{\lambda} =U_{\Com} \otimes \partial_{\epsilon} + \partial'$ with $\partial_{\epsilon}$ the differential of $B^n_E$ and
such that $\partial'$ lowers the arity of $U_{\Com}$.
Filter $(U_{\Com} \otimes (\Sigma^{-n} T^n\circ E_n), \partial_{\lambda} )$ by the subcomplexes
\begin{equation*}
F^p= \bigoplus_{i \geq p} U_{\Com}(i) \otimes (\Sigma^{-n} T^n \circ E_n)
\end{equation*}
and filter $\Omega^1_{\Com}$ by
\begin{equation*}
G^p= \bigoplus_{i\geq p} \Im (U_{\Com}(i) \otimes I).
\end{equation*}
Here $\Im (U_{\Com}(i) \otimes I)$ again is the image of $U_{\Com}(i) \otimes I$ under the isomorphism given in Lemma \ref{ref:UCToOmega}. The morphism $\Phi$ respects these filtrations.
The spectral sequence associated to the filtration $F$ has $E^1$-term
\begin{equation*}
E_{p,q}^1= U_{\Com}(p) \otimes H_q(\Sigma^{-n} B^n_{E_n}).
\end{equation*}
Let $\epsilon$ denote the quasiisomorphism
\begin{equation*}
\Sigma^{-n}B^n_{E_n} \ra I
\end{equation*}
exhibited in \cite[8.1]{Fre11} for $n < \infty$ and in \cite[ch.9]{Fre11} for $n=\infty$.
The map $\Phi$ factors as the map $U_{\Com} \otimes \epsilon$ followed by the isomorphism from Lemma \ref{ref:UCToOmega}, hence induces an isomorphism at the $E^1$-stage of the spectral sequences.
\end{proof}

\begin{thm}\label{th:EnHomWithUnCoef}
Let $1 \leq n \leq \infty$ and let $A$ be a commutative algebra in $\dgmod$ such that $A$ is cofibrant as a differential graded $k$-module.
Then we have:
\begin{equation*}
H_*^{E_n}(A; U_{\Com}(A)) = H_*(A_+ \otimes \Sigma^{-n}B^n(A), \partial_{\theta}).
\end{equation*}
\end{thm}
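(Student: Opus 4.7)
The plan is to exhibit two cofibrant models for $\Omega^1_{\Com}$ in $\UCModMEn$, namely the operadic bar model $(U_{\Com}\otimes B(I,E_n,E_n),\partial_{\theta_B})$ of Lemma~\ref{lem:HomComplexSimple} and the small model $(U_{\Com}\otimes(\Sigma^{-n}T^n\circ E_n),\partial_{\lambda})$ built from the iterated bar complex, to compare them inside the model category of Section~\ref{section:ModuleToHomology}, and then to apply $-\circ_{E_n}A$. Both objects are quasifree over a $\Sigma_*$-free generating module and come equipped with the obvious filtration by weight (respectively bar-length in the $T^n$ factor and in $\bB(E_n)$), so Corollary~\ref{cor:CofUModP} ensures that they are cofibrant in $\UCModMEn$. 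The preceding propositions provide weak equivalences $\Phi$ and $\ev$ from these two objects to $\Omega^1_{\Com}$.

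First, since both sources are cofibrant and $\ev$ is in particular a fibration (it is surjective onto the cofibrant target $\Omega^1_{\Com}$, which is concentrated in weight zero), standard model category arguments yield a lift
\begin{equation*}
\Psi\colon (U_{\Com}\otimes(\Sigma^{-n}T^n\circ E_n),\partial_{\lambda})\longrightarrow (U_{\Com}\otimes B(I,E_n,E_n),\partial_{\theta_B})
\end{equation*}
in $\UCModMEn$ such that $\ev\,\Psi=\Phi$, and the two-out-of-three axiom forces $\Psi$ to be a weak equivalence.

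Second, I would push $\Psi$ along the relative composition functor $-\circ_{E_n}A$. Because $E_n$ is $\Sigma_*$-cofibrant and $A$ is cofibrant in $\dgmod$, this functor preserves weak equivalences between cofibrant right $E_n$-modules (a standard Künneth/resolution argument from \cite[ch.~15]{Fre09}), and it commutes with the forgetful functor to right $E_n$-modules, hence preserves weak equivalences between cofibrant objects of $\UCModMEn$ as well. On the operadic bar side Lemma~\ref{lem:HomComplexSimple} identifies the output with $U_{\Com}(A)\otimes_{U_{E_n}(Q_A)}\Omega^1_{E_n}(Q_A)$ for $Q_A=B(E_n,E_n,A)$, which is a cofibrant replacement of $A$ as an $E_n$-algebra and therefore computes $H_*^{E_n}(A;U_{\Com}(A))$ by definition.

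Third, I would identify the small side. As a right $E_n$-module $U_{\Com}\otimes(\Sigma^{-n}T^n\circ E_n)\circ_{E_n}A$ is $A_+\otimes \Sigma^{-n}B^n(A)$, using $U_{\Com}\circ_{E_n}A=A_+$ and $\Sigma^{-n}B^n_{E_n}(A)=\Sigma^{-n}B^n(A)$ for commutative $A$ (the $E_n$-action factors through $\Com$). By construction of $\partial_{\lambda}$ in Proposition~\ref{prop:lifting}, the image of $\partial_{\lambda}$ under $U_{\Com}\otimes(\Sigma^{-n}T^n\circ\psi)$ is $\partial_{\theta}+U_{\Com}\otimes\partial_{\gamma}$, and the action of $E_n$ on $A$ factors through $\psi$; hence after applying $-\circ_{E_n}A$ the twist $\partial_{\lambda}$ reduces precisely to the twisting differential $\partial_{\theta}$ of Definition~\ref{defn:twistingdifferential} added to the internal bar-complex differential. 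Combining the three steps yields the claimed identification.

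The main obstacle is the second step: one has to be confident that $-\circ_{E_n}A$ really preserves the weak equivalence $\Psi$. Once the cofibrancy hypotheses on $A$ and on the two modules are in hand this follows from the usual Künneth type comparison for operadic tensor products, but it has to be invoked carefully so that the left $U_{\Com}$-action is preserved, which is automatic because $U_{\Com}\circ_{E_n}-$ is strong monoidal on free modules and both sides are quasifree.
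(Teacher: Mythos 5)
There is a genuine gap in your second step, and it is exactly the pitfall that the argument has to be engineered to avoid. The two objects $(U_{\Com}\otimes(\Sigma^{-n}T^n\circ E_n),\partial_{\lambda})$ and $(U_{\Com}\otimes B(I,E_n,E_n),\partial_{\theta_B})$ are cofibrant in $\UCModMEn$, but they are \emph{not} cofibrant as plain right $E_n$-modules: the underlying right $E_n$-module involves the tensor factor $U_{\Com}$ with its restricted $E_n$-action, and this is not quasifree over $E_n$. Consequently the ``standard K\"unneth/resolution argument'' from \cite[ch.~15]{Fre09} does not apply to $\Psi\circ_{E_n}A$; the hypothesis of \cite[15.1.A]{Fre09} is cofibrancy as right $E_n$-modules, not cofibrancy in the overcategory $\UCModMEn$, and your remark that the functor ``commutes with the forgetful functor, hence preserves weak equivalences between cofibrant objects of $\UCModMEn$ as well'' is a non sequitur for precisely this reason. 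You correctly identify this as the main obstacle but then dismiss it with an appeal to quasifreeness and strong monoidality of $U_{\Com}\circ_{E_n}-$, which does not address the actual problem.

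The correct route, and the one the paper takes, uses the commutativity of $A$ in an essential way: first apply the left Quillen functor $-\circ_{E_n}\Com$ of Proposition~\ref{prop:QuiEqUModP}, which does preserve weak equivalences between cofibrant objects of $\UCModMEn$, so that $\Psi\circ_{E_n}\Com$ is a weak equivalence. Then observe that the resulting objects are quasifree as right $\Com$-modules, because $U_{\Com}$ happens to be a free right $\Com$-module (generated by $\mu_1^U$ in arity zero and $\mu_2^U$ in arity one). Only at that stage can one invoke \cite[15.1.A.(a)]{Fre09} to conclude that $-\circ_{\Com}A$ preserves the weak equivalence, and finally use $\Psi\circ_{E_n}\Com\circ_{\Com}A=\Psi\circ_{E_n}A$ for commutative $A$. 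Your first and third steps (the lift over $\Omega^1_{\Com}$, and the identification of the small model after applying $-\circ_{E_n}A$) agree with the paper and are fine.
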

\begin{proof}
By definition $H_*^{E_n}(A;U_{\Com}(A))$ is the homology of
\begin{equation*}
U_{\Com}(A) \otimes_{U_{E_n}(Q_A)} \Omega^1_{E_n}(Q_A)
\end{equation*}
for a cofibrant replacement $Q_A$ of $A$ as an $E_n$-algebra. Since $A$ is a cofibrant in $\dgmod$, we can set $Q_A = B(E_n,E_n,A)$ (see \cite[2.12]{Fre09a}).
There exists a lift $f$ such that
\begin{equation*}
\xymatrix{ &(U_{\Com} \otimes  B(I,E_n,E_n), \partial_{\theta_B}) \ar@{>>}[d]_-{\sim}^-{\ev}\\
(U_{\Com} \otimes (\Sigma^{-n}T^n\circ{E_n}), \partial_{\lambda})  \ar@{>>}[r]^-{\Phi}_-{\sim} \ar[ru]^-{f}&  \Omega^1_{\Com} }
\end{equation*}
commutes since $(U_{\Com} \otimes (\Sigma^{-n}T^n\circ{E_n}),\partial_{\lambda}) $ is cofibrant according to Proposition \ref{cor:CofUModP}. The map $f$ is a quasiisomorphism of left $U_{\Com}$-modules in right $E_n$-modules. But note that while these are cofibrant objects in $\UCModMEn$ they are not cofibrant as right $E_n$-modules, hence we can not deduce from \cite[15.1.A]{Fre09} that $f \circ_{E_n} B$ is a quasiisomorphism for any $E_n$-algebra $B$.

However, consider $f \circ_{E_n} \Com$ and note that $- \circ_{E_n} \Com$ is the left adjoint in the Quillen adjunction
discussed in \ref{prop:QuiEqUModP}.
Hence $-\circ_{E_n}\Com$ preserves cofibrant objects and weak equivalences between them, and therefore $f \circ_{E_n} \Com$ is a weak equivalence. Now
\begin{equation*}
(U_{\Com} \otimes  B(I,E_n,E_n), \partial_{\theta_B}) \circ_{E_n} \Com \cong (U_{\Com} \otimes  B(I,E_n,\Com), \partial_{\theta_B})
\end{equation*}
and
\begin{equation*}
(U_{\Com} \otimes (\Sigma^{-n}T^n\circ{E_n}),\partial_{\lambda}) \circ_{E_n} \Com \cong (U_{\Com} \otimes (\Sigma^{-n}T^n\circ{\Com}), U_{\Com} \otimes \partial_{\gamma} + \partial_{\theta})
\end{equation*}
are quasi-free right $\Com$-modules because $U_{\Com}$ is a free right $\Com$-module generated by $\mu_1^U$ in arity zero and $\mu_2^U$ in arity one.
Therefore, according to Theorem \cite[15.1.A.(a)]{Fre09}, for a commutative algebra $A$ the map $f \circ_{E_n} \Com \circ_{\Com} A$ is a quasiisomorphism as well.
But for commutative $A$ we have $f \circ_{E_n} \Com \circ_{\Com} A = f\circ_{E_n} A$ and $U_{\Com} \circ_{E_n} A = U_{\Com}(A) =A_+$.
Since
\begin{equation*}
(U_{\Com} \otimes(\Sigma^{-n}T^n\circ{E_n}),\partial_{\lambda}) \circ_{E_n} A = (A_+\otimes \Sigma^{-n}T^n (A),\partial_{\theta} + \id_{A_+} \otimes \partial_{\gamma}),
\end{equation*}
while we know from Lemma \ref{lem:HomComplexSimple} that
\begin{equation*}
(U_{\Com} \otimes B(I,E_n,E_n),\partial_{\theta_B}) \circ_{E_n}A = U_{\Com}( A) \otimes_{U_{E_n}(Q_A)} \Omega^1_{E_n}(Q_A),
\end{equation*}
this yields an isomorphism
\begin{equation*}
H_*^{E_n}(A; U_{\Com}(A)) \cong H_*(A_+ \otimes \Sigma^{-n}T^n(A), \partial_{\theta} + \id_{A_+} \otimes \partial_{\gamma}).
\end{equation*}
\end{proof}

\begin{thm}[{Theorem~\ref{mainresult}, homological case}]\label{th:EnHomWithCoeffs}
Let $1 \leq n \leq \infty$. Let $A$ be a commutative differential graded algebra and let $M$ be a symmetric $A$-bimodule in $\dgmod$.
We have
\begin{equation*}
H_*^{E_n}(A; M) = H_*(B^{[n]}_*(A,M))
\end{equation*}
provided that $A$ is cofibrant in $\dgmod$.
\end{thm}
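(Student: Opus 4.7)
The plan is to deduce the general coefficient case from the universal coefficient case treated in Theorem~\ref{th:EnHomWithUnCoef}. Since $A$ is commutative and $M$ is a symmetric $A$-bimodule, the $U_{E_n}(Q_A)$-module structure on $M$ (which makes $M$ a representation of $A$ as an $E_n$-algebra) factors through $U_{\Com}(A) = A_+$ via the composite
\begin{equation*}
U_{E_n}(Q_A) \longrightarrow U_{E_n}(A) \longrightarrow U_{\Com}(A) = A_+.
\end{equation*}
Consequently, for any cofibrant replacement $Q_A$ we have a canonical identification
\begin{equation*}
M \otimes_{U_{E_n}(Q_A)} \Omega^1_{E_n}(Q_A) \;\cong\; M \otimes_{A_+}\bigl(U_{\Com}(A) \otimes_{U_{E_n}(Q_A)} \Omega^1_{E_n}(Q_A)\bigr).
\end{equation*}

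With $Q_A = B(E_n,E_n,A)$, the right-hand factor inside the $M\otimes_{A_+}(-)$ is precisely the chain complex $(U_{\Com}\otimes B(I,E_n,E_n),\partial_{\theta_B})\circ_{E_n} A$ by Lemma~\ref{lem:HomComplexSimple}. The proof of Theorem~\ref{th:EnHomWithUnCoef} produces a lift $f\colon (U_{\Com}\otimes(\Sigma^{-n}T^n\circ E_n),\partial_\lambda) \to (U_{\Com}\otimes B(I,E_n,E_n),\partial_{\theta_B})$ in $\UCModMEn$ which becomes, after the passage to $\Com$ and then to $A$, a weak equivalence of $A_+$-modules
\begin{equation*}
f\circ_{E_n} A\colon (A_+\otimes\Sigma^{-n}B^n(A),\partial_\theta) \;\xrightarrow{\;\sim\;}\; U_{\Com}(A)\otimes_{U_{E_n}(Q_A)}\Omega^1_{E_n}(Q_A).
\end{equation*}
Applying $M\otimes_{A_+}(-)$ to this map turns the left side into $B^{[n]}_*(A,M)$ by the very definition of that complex, and turns the right side into the standard complex $M\otimes_{U_{E_n}(Q_A)}\Omega^1_{E_n}(Q_A)$ whose homology is $H^{E_n}_*(A;M)$.

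The core of the argument is therefore to show that $M\otimes_{A_+}(-)$ preserves this particular weak equivalence. The strategy is to check that both sides are cofibrant as left $A_+$-modules, so that the functor $M\otimes_{A_+}(-)$ agrees with its left derived functor and hence sends weak equivalences between them to quasi-isomorphisms. For the right-hand side, since $A$ is cofibrant in $\dgmod$ the algebra $Q_A=B(E_n,E_n,A)$ is cofibrant as an $E_n$-algebra, so $\Omega^1_{E_n}(Q_A)$ is cofibrant as a $U_{E_n}(Q_A)$-module (by the standard identification with the cotangent complex of a quasifree algebra) and the base change along $U_{E_n}(Q_A)\to A_+$ preserves cofibrancy. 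For the left-hand side $(A_+\otimes\Sigma^{-n}B^n(A),\partial_\theta)$, the underlying graded $A_+$-module is free on $\Sigma^{-n}B^n(A)$ and the twisting differential $\partial_\theta$ admits, from the tree filtration on $T^n$, a filtration by sub-$A_+$-modules with $\partial_\theta(F_p)\subset F_{p-1}$; this exhibits it as a quasifree bounded-below $A_+$-module, hence cofibrant by the analogue of Corollary~\ref{cor:CofUModP} applied to the operad $\Com$ and algebra $A_+$.

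The main obstacle is the verification of these cofibrancy properties in a form strong enough to survive tensoring with an \emph{arbitrary} symmetric bimodule $M$ (without any cofibrancy assumption on $M$ itself). An alternative, and possibly cleaner, route which I would pursue in parallel is to mimic the ``go via $\Com$'' argument of Theorem~\ref{th:EnHomWithUnCoef}: regard the quasi-isomorphism $f\circ_{E_n}\Com$ as a weak equivalence of quasifree $U_{\Com}$-modules in right $\Com$-modules, then observe that the functor $N\mapsto M\otimes_{U_{\Com}}(N\circ_{\Com}A)$, for a symmetric $A$-bimodule $M$, is compatible with the quasifree structure in the sense needed to apply Theorem~15.1.A of \cite{Fre09} and deduce the desired quasi-isomorphism directly, bypassing the delicate cofibrancy check over $A_+$.
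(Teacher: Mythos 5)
Your proposal is correct and follows essentially the same route as the paper: factor the coefficients through $A_+$, identify the standard complex for $Q_A=B(E_n,E_n,A)$ via Lemma~\ref{lem:HomComplexSimple}, and apply $M\otimes_{A_+}(-)$ to the quasi-isomorphism of cofibrant $A_+$-modules produced in the proof of Theorem~\ref{th:EnHomWithUnCoef}. The paper simply asserts the cofibrancy of both $A_+$-modules where you spell out the quasifree, bounded-below filtration argument; your proposed ``alternative route'' via $\Com$ is not needed separately, since it is already how the quasi-isomorphism $f\circ_{E_n}A$ was obtained in the first place.
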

\begin{proof}
For $Q_A$ a cofibrant replacement of $A$ as an $E_n$-algebra $H_*^{E_n}(A; M)$ is the homology of the complex
\begin{equation*}
M \otimes_{A_+} A_+ \otimes_{U_{\Com}(Q_A)} \Omega^1_{E_n}(Q_A).
\end{equation*}
Again, we set $Q_A= B(E_n,E_n,A)$ and see that this equals
\begin{equation*}
M \otimes_{A_+} (A_+ \otimes B(I,E_n,A), \partial_{\theta_B}).
\end{equation*}
Since both $A_+ \otimes B(I,E_n,A)$ as well as $(A_+ \otimes \Sigma^{-n}B^n(A),\partial_{\theta})$ are cofibrant differential graded $A_+$-modules,
 the result follows directly from the quasiisomorphism exhibited in the proof of Theorem \ref{th:EnHomWithUnCoef}.
\end{proof}

\begin{thm}[{Theorem~\ref{mainresult}, cohomological case}]\label{th:EnCohWithCoeffs}
Let $1 \leq n \leq \infty$. Let $A$ be a commutative algebra and let $M$ be a symmetric $A$-bimodule.
We have:
\begin{equation*}
H^*_{E_n}(A; M) = H_*(B_{[n]}^*(A,M))
\end{equation*}
provided that $A$ is cofibrant in $\dgmod$.
\end{thm}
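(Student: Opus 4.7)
The plan is to follow the same strategy as in the homological case (Theorem~\ref{th:EnHomWithCoeffs}), but applying the internal Hom functor $\underline{\Hom}_{A_+}(-,M)$ in place of the relative tensor product $M\otimes_{A_+}-$. By definition of derived functors and since $A$ is cofibrant in $\dgmod$, we may take $Q_A=B(E_n,E_n,A)$ as a cofibrant replacement of $A$ as an $E_n$-algebra. Then by definition of $E_n$-cohomology and the identification $\Der_{E_n}(Q_A,M)\cong \underline{\Hom}_{U_{E_n}(Q_A)}(\Omega^1_{E_n}(Q_A),M)$, we have
\begin{equation*}
H^*_{E_n}(A;M)=H_*\bigl(\underline{\Hom}_{U_{E_n}(Q_A)}(\Omega^1_{E_n}(Q_A),M)\bigr).
\end{equation*}

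Next, I would use the standard base-change adjunction. The augmentation $Q_A\ra A$ gives a morphism of algebras $U_{E_n}(Q_A)\ra U_{\Com}(A)=A_+$, and since $M$ is an $A_+$-module, the adjunction between extension and restriction of scalars yields a natural isomorphism
\begin{equation*}
\underline{\Hom}_{U_{E_n}(Q_A)}(\Omega^1_{E_n}(Q_A),M)\cong \underline{\Hom}_{A_+}\bigl(A_+\otimes_{U_{E_n}(Q_A)}\Omega^1_{E_n}(Q_A),\,M\bigr).
\end{equation*}
The right-hand side is, by Lemma~\ref{lem:HomComplexSimple}, canonically identified with $\underline{\Hom}_{A_+}\bigl((U_{\Com}\otimes B(I,E_n,E_n),\partial_{\theta_B})\circ_{E_n}A,\,M\bigr)$.

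The key input is then the quasiisomorphism of cofibrant $A_+$-modules established in the proof of Theorem~\ref{th:EnHomWithUnCoef}, namely
\begin{equation*}
f\circ_{E_n}A\colon (A_+\otimes \Sigma^{-n}B^n(A),\partial_{\theta})\xrightarrow{\sim} (U_{\Com}\otimes B(I,E_n,E_n),\partial_{\theta_B})\circ_{E_n}A.
\end{equation*}
Both the source and the target are cofibrant as differential graded $A_+$-modules (as noted at the end of the proof of Theorem~\ref{th:EnHomWithCoeffs}). Applying $\underline{\Hom}_{A_+}(-,M)$ to a quasiisomorphism between cofibrant objects in $\dgmod$-enriched $A_+$-modules preserves weak equivalences, hence yields a quasiisomorphism of mapping complexes. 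Combining this with the identifications above gives the desired equality
\begin{equation*}
H^*_{E_n}(A;M)=H_*\bigl(\underline{\Hom}_{A_+}((A_+\otimes \Sigma^{-n}B^n(A),\partial_{\theta}),M)\bigr)=H_*(B_{[n]}^*(A,M)).
\end{equation*}

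The main technical point is the preservation of the quasiisomorphism by $\underline{\Hom}_{A_+}(-,M)$, which reduces to cofibrancy of both modules in question. This follows from Corollary~\ref{cor:CofUModP} applied relative to the ground operad (so that the quasifree structure over $A_+$, rather than over $U_{\Com}$, provides the cofibrancy we need), together with the fact that every object of $\dgmod$ is fibrant in the projective model structure. Once this point is granted, everything else is a formal dualisation of the homological argument.
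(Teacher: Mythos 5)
Your proposal is correct and follows essentially the same route as the paper: the same cofibrant replacement $Q_A=B(E_n,E_n,A)$, the same identification of the derivation complex with $\underline{\Hom}_{A_+}((A_+\otimes B(I,E_n,A),\partial_{\theta_B}),M)$, and the same final step of applying $\underline{\Hom}_{A_+}(-,M)$ to the quasiisomorphism of cofibrant differential graded $A_+$-modules coming from the proof of the universal-coefficient case. The only cosmetic difference is that you obtain the middle identification formally from the extension/restriction-of-scalars adjunction combined with Lemma~\ref{lem:HomComplexSimple}, whereas the paper unravels the differential on $\Der_{E_n}(Q_A,M)$ explicitly and checks by hand that it is induced by $\partial_{\theta_B}$.
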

\begin{proof}
By definition $H^*_{E_n}(A;M) = H_*(\Der_{\mcP}(Q_A, M))$ for a cofibrant replacement $Q_A$ of $A$ as an $E_n$-algebra. Choose $Q_A=B(E_n,E_n,A)$.
Since $B(E_n,E_n,A)$ is quasifree,
\begin{equation*}
\Der_{E_n}(Q_A, M) = (\underline{\Hom}_k(B(I,E_n,A),M), \partial)
\end{equation*}
with $\partial(f)$ the composite
\begin{multline*}
B(I,E_n,A)\xrightarrow{\partial_R} E_n(B(I,E_n,A))
\xrightarrow{E_n \circ' B(I,E_n,A)} E_n( B(I,E_n,A); B(I,E_n,A))\\
\xrightarrow{E_n(\epsilon, f)} E_n(A;M)
\xrightarrow{\gamma_M} M
\end{multline*}
for $f\colon B(I,E_n,A) \ra M$, where the first map is defined by using  $B(I,E_n,A) \subset B(E_n,E_n,A) = E_n(B(I,E_n,A))$,
the map $\epsilon \colon B(I,E_n,A) \subset B(E_n,E_n,A) \ra A$ is the standard augmentation,
and $\gamma_M$ is the structure map of the $E_n$-representation $M$ of $A$.
There is a commuting diagram of differential graded modules
\begin{equation*}
\xymatrix{ E_n   ( B(I,E_n,A); B(I,E_n,A)) \ar[d]^-{\cong}\ar[rr]^-{E_n(\epsilon, f)} && E_n(A;M) \ar[rr]^-{\gamma_M} \ar[d]^-{\cong} & &
M \ar@{=}[d]\\
U_{E_n}( B(I,E_n,A)) \otimes  B(I,E_n,A) \ar[rr]^-{U_{E_n}(\epsilon) \otimes f} &&
U_{E_n}(A) \otimes M \ar[r] & U_{\Com}(A) \otimes M \ar[r]^-{\mu_M} &M }
\end{equation*}
with $\mu_M$ defined by $M$ being an $E_n$-representation of $A$. The vertical isomorphisms are given by identifying
\begin{equation*}
p(x_1,\dots,x_{i-1},y,x_{i+1},\dots,x_l) \in \mcP(X;Y)
\end{equation*}
with
\begin{equation*}
p(x_1,\dots,x_{i-1},x,x_{i+1},\dots,x_l) \otimes y \in U_{\mcP}(X) \otimes Y
\end{equation*}
for $p \in \mcP, x_1,\dots,x_l \in X$ and $y \in Y$. Use the identification
\begin{equation*}
(\underline{\Hom}_k(B(I,E_n,A),M), \partial) \cong (\underline{\Hom}_{U_{\Com}(A)}(U_{\Com}(A)\otimes B(I,E_n,A),M), \tilde \partial).
\end{equation*}
The differential  $\tilde \partial(f)$ can be calculated to be induced by $\partial_{\theta_B}$.
Since $(A_+\otimes B(I,E_n,A),\partial_{\theta_B})$ and  $(A_+\otimes \Sigma^{-n}B^n(A),  \partial_{\theta})$ are cofibrant differential graded $A_+$-modules, the quasiisomorphism exhibited in the proof of Theorem \ref{th:EnHomWithUnCoef} induces a quasiisomorphism
from
\begin{equation*}
\underline{\Hom}_{A_+}((A_+\otimes B(I,E_n,A),\partial_{\theta_B}),M)
\end{equation*}
to
\begin{equation*}
\underline{\Hom}_{A_+}((A_+\otimes \Sigma^{-n}B^n(A),  \partial_{\theta}), M).
\end{equation*}
\end{proof}


\begin{thebibliography}{9999999}

\bibitem[BMR]{BMR}
Tobias Barthel, Peter May, Emily Riehl,
\emph{Six model structures for DG-modules over DGAs: Model category theory in homological action}, 2013. Available online at \href{http://arxiv.org/abs/1310.1159}{http://arxiv.org/abs/1310.1159}.

\bibitem[Be96]{Be96}
Clemens Berger,
\emph{Op\'erades cellulaires et espaces de lacets it\'er\'es},
Ann. Inst. Fourier (Grenoble) \textbf{46} (4) 1996, 1125--1157.

\bibitem[Be97]{Be97}
Clemens Berger,
\emph{Combinatorial models for real configuration spaces and {$E_n$}-operads},
Operads: {P}roceedings of {R}enaissance {C}onferences
              ({H}artford, {CT}/{L}uminy, 1995),
Contemp. Math. \textbf{202} 1997, Amer. Math. Soc., Providence, RI, 37--52.

\bibitem[BV73]{BV73}
Michael Boardman, Rainer Vogt,
\emph{Homotopy invariant algebraic structures on topological spaces},
Lecture Notes in Mathematics \textbf{347},
 Springer-Verlag, Berlin (1973), x+257 pp.

\bibitem[EM53]{EM53}
Samuel Eilenberg and Saunders Mac Lane,
\emph{On the groups of {$H(\Pi,n)$}. {I}},
Ann. of Math. (2) \textbf{58} 1953,
55--106.

\bibitem[Fra13]{Fra13}
John Francis,
\emph{The tangent complex and Hochschild cohomology of $\mathcal{E}_n$-rings},
Compos. Math. \textbf{149} 2013, 430--480.

\bibitem[Fre04]{Fre04}
Benoit Fresse, \emph{Koszul duality of operads and homology of partition posets},
 Homotopy theory: relations with algebraic geometry, group
              cohomology, and algebraic {$K$}-theory, Contemp. Math. \textbf{346} 2004, Amer. Math. Soc., Providence, RI, 115--215.

\bibitem[Fre09]{Fre09}
Benoit Fresse,
\emph{Modules over operads and functor},
Lecture Notes in Mathematics \textbf{1967}, Springer-Verlag, Berlin (2009), x+308 pp.

\bibitem[Fre09a]{Fre09a}
Benoit Fresse,
\emph{Operadic cobar constructions, cylinder objects and homotopy morphisms of algebras over operads}, Alpine persepctives on algebraic topology (Arolla, 2008), Contemp. Math. \textbf{504} 2009, Amer. Math. Soc., Providence, RI, 125--189.

\bibitem[Fre11]{Fre11}
Benoit Fresse,
\emph{Iterated bar complexes of E-infinity algebras and homology theories},
Alg. Geom. Topol. \textbf{11} 2011, 747--838.

\bibitem[FrApp]{FrApp}
Benoit Fresse, \emph{Iterated bar complexes and the poset of pruned trees. {A}ddendum to the paper "{I}terated bar complexes of {E}-infinity algebras and homology theories"}, 2008.
Available online at \href{http://math.univ-lille1.fr/~fresse/IteratedBarAppendix.pdf}{http://math.univ-lille1.fr/$\sim$ fresse/IteratedBarAppendix.pdf}.

\bibitem[Hi03]{Hi03}
Philip Hirschhorn,
\emph{Model categories and their localizations},
Mathematical Surveys and Monographs \textbf{99}, American Mathematical Society, Providence, RI (2003), xvi+457 pp.

\bibitem[Ho10]{Ho10}
Eric Hoffbeck, \emph{{$\Gamma$}-homology of algebras over an operad},
Algebr. Geom. Topol. \textbf{10} (3) 2010, 1781--1806.


\bibitem[LR11]{LR11}
Muriel Livernet, Birgit Richter,
\emph{An interpretation of $E_n$-homology as functor homology},
Mathematische Zeitschrift \textbf{269} (1) 2011, 193--219.

\bibitem[Ma72]{Ma72}
Jon Peter May,
\emph{The geometry of iterated loop spaces},
Lectures Notes in Mathematics \textbf{271}, Springer-Verlag, Berlin-New York (1972), viii+175 pp.

\bibitem[Pir00]{Pir00}
Teimuraz Pirashvili,
\emph{Hodge decomposition for higher order Hochschild homology},
Ann. Sci. \'Ecole Norm. Sup. (4) \textbf{33} 2000, 151--179.

\bibitem[RW02]{RW02}
Alan Robinson, Sarah Whitehouse,
\emph{Operads and $\Gamma$-homology of commutative rings},
Math. Proc. Cambridge Philos. Soc. \textbf{132} 2002, 197--234.

\bibitem[Z]{Z}
Stephanie Ziegenhagen, \emph{$E_n$-cohomology with coefficients as functor cohomology}, preprint 2014. Available online at \href{http://arxiv.org/abs/1412.6031}{http://arxiv.org/abs/1412.6031}.

\end{thebibliography}
\end{document}